\theoremstyle{plain}\newtheorem{thm}{Theorem}[section]\newtheorem{cor}[thm]{Corollary}
\theoremstyle{plain}\newtheorem{lem}[thm]{Lemma}
\theoremstyle{plain}\newtheorem{prop}[thm]{Proposition}
\theoremstyle{definition}\newtheorem{defn}[thm]{Definition}
\theoremstyle{remark}
\theoremstyle{remark}\newtheorem{exam}[thm]{Example}
\theoremstyle{remark}\newtheorem{rem}[thm]{Remark}
\theoremstyle{remark}
\def\mapr#1{{\mathop{\longrightarrow}\limits^{#1}}}
\def\gf#1,#2{{\mathop{\genfrac(){0cm}{0}{#1}{#2}}}} 
\let\sse=\subseteq\let\ssm=\smallsetminus
\let\mc=\mathcal
\let\mr=\mathrm
\subjclass[2000]{05C75, 13H10}
\author{Eric Emtander {$\dagger$}}
\address{$\dagger$ Department of Mathematics, Stockholm University, 
SE-106 91 Stockholm, Sweden}
\email{erice@math.su.se}
\author{Fatemeh Mohammadi {$\dagger\dagger$}} 
\address{$\dagger\dagger$ Faculty of Mathematics and Computer Sciences, 
Amirkabir University of Technology 424, Hafez Ave., P. O. Box 15875-4413, 
Tehran, Iran}
\email{f$_{-}$mohammadi@aut.ac.ir}
\author{Somayeh Moradi {$\ddagger$}} 
\address{$\ddagger$ Faculty of Mathematics and Computer Sciences, 
Amirkabir University of Technology 424, Hafez Ave., P. O. Box 
15875-4413, Tehran, Iran}
\email{s$_{-}$moradi@aut.ac.ir}
\begin{document}
\title{Some algebraic properties of hypergraphs}
\maketitle

\begin{abstract}
We consider Stanley--Reisner rings $k[x_1,\ldots,x_n]/I(\mc{H})$ where $I(\mc{H})$ is 
the edge ideal associated to some particular classes of hypergraphs. For instance, we consider 
hypergraphs that are natural generalizations of graphs that are lines and cycles, and for 
these we compute the Betti numbers. We also generalize upon some known results about chordal 
graphs and study a weak form of shellability.
\end{abstract}

\thanks{This paper was written during the visit of the second and the third
authors to Department of Mathematics, Stockholm University, Sweden. They
would like to express their deep gratitude to Professors J\"orgen Backelin and 
Ralf Fr\"oberg for their warm hospitality, support, and guidance. 
They also wish to thank the ministry of science, research and technology of Iran for the 
financial support.}

\section{Introduction}
Stanley--Reisner rings associated to graphs have been widely studied since edge ideals 
were introduced by R.~Villarreal in \cite{Vi}. Chordal graphs have been in particular 
focus and indeed, many natural results and questions seem to be connected to this class of 
graphs. See for example \cite{Eng,F1,VT,He2,He3,VT4}. More recently, also edge ideals of hypergraphs 
have been studied and many results and familiar properties of graphs have got hypergraph 
analouges. See for example \cite{E1,E2,VT,Vi2}. Also \cite{Ja} should be mentioned. In this paper 
the author computes Betti numbers of many classes of graphs.

Graphs may be generalized in several different ways of which hypergraphs is merely one. S.~Faridi 
introduced (\cite{Fa1}) a way of viewing a simplicial complex as a generalization of a graph. The 
complexes considered are called facet complexes. The idea is to consider the facets of a complex as 
a kind of generalized edges. Since a graph may be considered as a one dimensional simplicial complex, 
in this way one indeed obtains a natural generalization of graphs. Many nice results may be found in 
\cite{Fa1, Fa2, He2, He3, Zh}. In \cite{Zh}, the author introduces the class of quasi-forests, which 
relates to chordal graph, see \cite{He3}. Also, in this paper, we show that they are closely related to 
chordal hypergraphs.\\

In Section \ref{lines/cycles} we consider natural hypergraph generalizations of graphs that are lines 
and cycles. We call them line hypergraphs and hypercycles. Here we generalize upon known results about 
Betti numbers from \cite{Ja}.\\

In Section \ref{chordal} we consider chordal hypergraphs, as defined in \cite{E2}. Corollary \ref{graph} 
is known from \cite{He3}, and Theorem \ref{hypergraph} provides a partial generalization of this result. \\

The results in section \ref{connectivity} are related to the concept of connectivity. Connectivity in 
graph theory is a well known concept and we explore it 
in a hypergraph context. Using (hypergraph)connectivity we are able to generalize some results on bounds on 
Betti numbers from \cite{Ja}. Also, some results connecting the depth of certain Stanley-reisner rings 
to connectivity, given in \cite{F1}, can be seen in a wider context and explained a bit deeper.\\ 

In the last section, Section \ref{d-shellability}, we consider the notion of $d$-shellability, a weaker 
notion than ordinary shellability. We see that, to some extent, $d$-shellability behaves like ordinary 
shellability. In particular, we show that there is an Alexander dual notion called $d$-quotients, that 
provides a natural generalization of the concept of linear quotients that is known to be Alexander dual 
to shellability. For ideals with $d$-quotients we give a formula for the Betti numbers and show that 
these ideals admit Betti splittings.

\section{Basics}
We give some basics that will be used in the paper. A good reference for hypergraphs 
is Berge's book \cite{Be}.\\

Let ${\mc X}$ be a finite set and ${\mc E}=\{E_1,\ldots,E_s\}$ a
finite collection of non empty subsets of ${\mc X}$. The pair ${\mc{
H=(X,E)}}$ is called a {\bf hypergraph}. The elements of ${\mc X}$
and ${\mc E}$, respectively, are called the {\bf vertices} and the
{\bf edges}, respectively, of the hypergraph. If we want to specify
what hypergraph we consider, we may write $\mc{X(H)}$ and
$\mc{E(H)}$ for the vertices and edges respectively. A hypergraph is
called {\bf simple} if: (1) $|E_i|\ge2$ for all $i=1,\ldots,s$ and
(2) $E_j\sse E_i$ only if $i=j$. If the cardinality of ${\mc X}$ is
$n$ we often just use the set $[n]=\{1,2,\ldots,n\}$ instead of
$\mc{X}$.\\

Let $\mc H$ be a hypergraph. A {\bf subhypergraph} $\mc K$ of $\mc
H$ is a hypergraph such that $\mc{X}(\mc{K})\sse\mc{X(H)}$, and
$\mc{E(K)}\sse\mc{E(H)}$. If $\mc Y\sse\mc{X}$, the {\bf induced
hypergraph on} $\mc Y$, $\mc{H}_{\mc{Y}}$, is the subhypergraph with
$\mc{X(H_Y)}=\mc{Y}$ and with $\mc{E(H_Y)}$ consisting of the edges
of $\mc{H}$ that lie entirely in $\mc{Y}$. A hypergraph $\mc H$ is
said to be {\bf $d$-uniform} if $|E_i|=d$ for every edge $E_i\in
\mc{E( H)}$. By a uniform hypergraph we mean a hypergraph that is 
$d$-uniform for some $d$. Note that a simple $2$-uniform hypergraph is just an
ordinary simple graph. In this paper we consider only simple uniform hypergraphs, 
and hence, by hypergraph we will always mean a simple hypergraph.

A {\bf free vertex $v$} of a hypergraph is, if there is one, a vertex $v$ that lies in 
at most one edge.  

One type of hypergraphs of particular importance is the $d$-complete hypergraphs $K_n^d$:
The {\bf $d$-complete hypergraph}, $K_n^d$, on a set of $n$ vertices, is
defined by
\[
\mc{E}(K_n^d)=\gf{[n]},{d}
\]
where $\gf{F},d$ denotes the set of all subsets of $F$, of
cardinality $d$. If $n<d$, we interpret $K_n^d$ as $n$ isolated
points.\\

Recall that an {\bf (abstract) simplicial complex} on vertex set $[n]$ is a collection, 
$\Delta$, of subsets of $[n]$ with the property that $G\sse F,\,F\in\Delta \Rightarrow G\in\Delta$. 
The elements of $\Delta$ are called the {\bf faces} of the complex and the maximal (under inclusion) 
faces are called {\bf facets}. The set of facets of $\Delta$ we denote by $\mc{F}(\Delta)$. 
The {\bf dimension, $\dim F$, of a face} $F$ in $\Delta$, is defined to be $|F|-1$, and the dimension 
of $\Delta$ is defined as $\dim\Delta=\max\{\dim F ;\, F\in \Delta\}$. Note that the empty set 
$\emptyset$ is the unique $-1$ dimensional face of every complex that is not the void complex 
$\{\}$ which has no faces. The dimension of the void complex may be defined as $-\infty$. Let 
$V\sse [n]$. We denote by $\Delta_V$ the simplicial complex
\[
\Delta_V=\{F\sse [n]\,;\,F\in\Delta, F\sse V\}.
\]
$\tilde{H}_n(\Delta;k)$ will denote the reduced homology of (the chain complex of) $\Delta$ with 
coefficients in the field $k$.\\

To every hypergraph on vertex set $[n]$ we associate two simplicial complexes, the 
{\bf Independence complex of $\mc{H}$}, $\Delta_{\mc{H}}$, and the {\bf Clique complex of $\mc{H}$}, 
$\Delta(\mc{H})$. These are defined as follows: 
\[
\Delta_{\mc{H}}=\{F\sse [n] ; E\not\sse F,\, \forall E\in \mc{E(H)}\}
\]
\[
\Delta(\mc{H})=\{F\sse [n] ; \genfrac(){0cm}{0}Fd \sse \mc{E}(\mc{H})\}.
\]

Throughout the paper $R$ will denote some polynomial ring $k[x_1,\ldots,x_n]$. The number $n$ will be 
the cardinality of the vertex set of some hypergraph considered. We use the convention that for a subset 
$F\sse [n]$, $x^F=\prod_{i\in F}x_i$. Now, let $\Delta$ be a simplicial complex 
on $[n]$. The {\bf Stanley--Reisner ring} $R/I_\Delta$ of $\Delta$ is the quotient of the ring 
$R=k[x_1,\ldots,x_n]$ by the {\bf Stanley--Reisner ideal} 
\[
I_\Delta=(x^F ;\, F\not\in\Delta)
\]
generated by the nonfaces of $\Delta$. Conversely, to every squarefree monomial ideal $I$ one may 
associate a unique simplicial complex $\Delta_I$ in such way that its Stanley--Reisner ideal is 
precisely $I$. If $\Delta$ is a given simplicial complex on vertex set $[n]$, its {\bf Alexander dual 
simplicial complex} is defined by
\[
\Delta^\ast=\{F\sse [n]\,;\, [n]\ssm F\not\in\Delta\}.
\]
This yields a natural duality of squarefree monomial ideals as well. Hence we may denote the 
Stanley--Reisner ideal of $\Delta^\ast$ by $I^\ast_\Delta$. More information about the relations 
between the ideals $I_\Delta$ and $I^\ast_\Delta$ can be found in \cite{MS}

If $I$ is any monomial ideal, we denote by $\mc{G}(I)$ it unique set of minimal monomial generators.

If $\mc{H}$ is a hypergraph, the Stanley--Reisner ideal of $\Delta_{\mc{H}}$ is called the edge ideal of $\mc{H}$, 
and is denoted $I(\mc{H})$.\\

Recall that the projective dimension, $\mr{pd}_R(M)$ of an $R$-module $M$, is defined as 
\[
\mr{pd}_R(M)=\max\{i;\,\mr{Tor}_i^R(M,k)\neq0\}.
\]
This number may depend on the characteristic of $k$. Furthermore, recall that the number 
$\beta_i(M)=\dim_k\mr{Tor}_i^R(M,k)$ is called the $i$'th Betti number of $M$. Note that 
the $\mr{Tor}$-modules and the Betti numbers are, in the cases we consider, naturally 
$\mathbb{N}^n$-graded. For details about the algebra used in connection to simplicial complexes, 
we refer the reader to the books \cite{BH} and \cite{MS}.

\section{Hypercycles and line hypergraphs}\label{lines/cycles}
In \cite{Ja} the author computes the Betti numbers of, among other things, 
graphs that are lines and cycles. When trying to lift these concepts to 
hypergraph analouges, one may handle the increased degree of freedom in 
potentially many different ways. We define line hypergraphs $L_{n}^{d,\alpha}$, and 
hypercycles $C_n^{d,\alpha}$ in a natural way and we compute their Betti numbers. 
In particular, we see that the formulas for the Betti numbers are independent of 
the characteristic of the field $k$.

\begin{defn}
For integers $n,\alpha$ and $d$, we define the {\bf line hypergraph} $L_{n}^{d,\alpha}$ 
as the $d$-uniform hypergraph with edge set
$\mc{E}(L_{n}^{d,\alpha})=\{E_1,\ldots, E_n\}$ and vertex set
$\mc{X}(L_{n}^{d,\alpha})=\bigcup_{i=1}^n E_i$ such that:
\begin{itemize}
\item[$(i)$]{
For any $i\neq j$, $E_i\bigcap E_{j}\neq \emptyset $ if
and only if $j=i-1$ or $j=i+1$, $0\le i\le n$.}
\item[$(ii)$]{
$|E_i\bigcap E_{i+1}|=\alpha$ for all $i$, $1\leq i\leq n-1$.}
\end{itemize}
The length of a line hypergraph is defined as the number of edges.
\end{defn}

\begin{defn}
The {\bf hypercycle $C_{n}^{d,\alpha}$} is the $d$-uniform hypergraph with
edge set $\mc{E}(C_{n}^{d,\alpha})=\{E_1,\ldots, E_n\}$ and vertex set
$\mc{X}(C_{n}^{d,\alpha})=\bigcup_{i=1}^n E_i$ such that:
\begin{itemize}
\item[$(i)$]{
For any $i\neq j$ we have $|E_i\bigcap E_{j}|\neq \emptyset $
if and only if $j\equiv i-1,i+1$ $\mr{mod}(n)$.}
\item[$(ii)$]{
$|E_i\bigcap E_{i+1}|=\alpha$ for all $i$, $1\leq i\leq n-1$
and $|E_1\bigcap E_{n}|=\alpha$.}
\end{itemize}
\end{defn}

\subsection{Betti Numbers of Line hypergraphs}

Assume that in a $d$-uniform hypergraph $\mc{H}$ the following holds: 1) If $E_i$, $E_j$ are two 
intersecting edges, then $|E_i\cap E_j|=\alpha$ for some fixed $\alpha$. 2) Every edge has a free 
vertex. For such hypergraph it is easy to see that $2\alpha<d$ must hold. This is the motivation of 
the following theorem, and the reason that it is natural to divide our further considerations in two 
cases, namely the case when $2\alpha<d$ and the case when $2\alpha=d$.

\begin{thm}\label{betti}
Let $\mathcal{H}$ be a hypergraph such that each edge of
$\mathcal{H}$ has a free vertex. Then
$\beta_i(R/I(\mathcal{H}))=\genfrac(){0cm}{0}ni$.
\end{thm}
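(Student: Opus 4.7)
The plan is to show that the Taylor complex on the minimal generators $x^{E_1},\dots,x^{E_n}$ of $I(\mathcal{H})$ is already a minimal free resolution of $R/I(\mathcal{H})$. Recall that, for any set of monomials $m_1,\dots,m_n$, the Taylor complex $T_\bullet$ is an $\mathbb{N}^n$-graded free resolution of $R/(m_1,\dots,m_n)$ with
\[
T_i=\bigoplus_{\substack{S\sse[n]\\ |S|=i}} R\cdot e_S,\qquad
\partial(e_S)=\sum_{s\in S}\varepsilon(s,S)\,\frac{m_S}{m_{S\ssm\{s\}}}\,e_{S\ssm\{s\}},
\]
where $m_S=\mr{lcm}\{m_j:j\in S\}$ and $\varepsilon(s,S)\in\{\pm1\}$ is a sign. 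A well known criterion (see, e.g., \cite{MS}) says that $T_\bullet$ is minimal if and only if $m_S\neq m_{S\ssm\{s\}}$ for every nonempty $S\sse[n]$ and every $s\in S$; otherwise some differential entry becomes a unit and one can split off a direct summand.

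The free-vertex hypothesis makes this criterion essentially tautological. In our case $m_j=x^{E_j}$, so $m_S=x^{\bigcup_{j\in S}E_j}$, and the required condition translates into
\[
\bigcup_{j\in S}E_j\;\neq\;\bigcup_{j\in S\ssm\{s\}}E_j
\]
whenever $s\in S$. But by assumption $E_s$ has a free vertex $v_s$, meaning $v_s$ lies in no edge other than $E_s$. Hence $v_s$ belongs to $\bigcup_{j\in S}E_j$ but not to $\bigcup_{j\in S\ssm\{s\}}E_j$, and the two unions are distinct. In particular $x_{v_s}$ divides $m_S/m_{S\ssm\{s\}}$, so every entry of every differential of $T_\bullet$ is a non-unit monomial, and $T_\bullet$ is minimal.

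Once minimality is in hand, the Betti numbers are read off as the ranks, yielding $\beta_i(R/I(\mathcal{H}))=\binom{n}{i}$. As a byproduct, the resolution is $\mathbb{N}^n$-graded and valid over any field, so the formula is independent of $\mr{char}\,k$, matching the emphasis made just before the theorem. The only genuine step is recognizing that the Taylor complex is the right tool; after that, the only obstacle is properly invoking the minimality criterion, and the free-vertex condition handles it in one line.
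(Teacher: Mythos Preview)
Your argument is correct and is essentially the same as the paper's: both show that the Taylor complex on the generators $x^{E_1},\dots,x^{E_n}$ is minimal because each free vertex $v_s$ witnesses $m_S\neq m_{S\ssm\{s\}}$, and then read off the Betti numbers as $\binom{n}{i}$. The paper phrases the key step slightly differently---checking that $x^{E_j}$ does not divide $\mr{lcm}(x^{E_1},\dots,\widehat{x^{E_j}},\dots,x^{E_n})$---but this is the same observation, and your version spells out the minimality criterion more explicitly.
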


\begin{proof}
Let $\mc{E}(\mathcal{H})=\{E_1,\ldots,E_n\}$ and $v_i\in E_i$ be a free
vertex for any $i$, $1\leq i\leq n$. Then, for any $j$, $1\leq j\leq
n$, $x^{E_j}$ does not divide ${\rm
lcm}(x^{E_1},\ldots,x^{E_{j-1}},x^{E_{j+1}},\ldots,x^{E_n})$, since $v_j\nmid {\rm
lcm}(x^{E_1},\ldots,x^{E_{j-1}},x^{E_{j+1}},\ldots,x^{E_n})$. Therefore the Taylor
resolution of $R/I(\mathcal{H})$ is minimal and
$\beta_i(R/I(\mathcal{H}))=\genfrac(){0cm}{0}ni$.
\end{proof}

In the following theorem, we give a combinatorial interpretation of the
graded Betti numbers of a hypergraph in which all edges have a free
vertex.

\begin{thm}\label{u}
Let $\mathcal{H}$ be a hypergraph with edges $E_1,\ldots, E_n$ such
that each edge has a free vertex. Then
\[
\beta_{i,j}(R/I(\mathcal{H}))=\Big|\{\bigcup_{t=1}^i E_{l_t}; \ \
1\leq l_1<\cdots <l_i\leq n, |\bigcup_{t=1}^i E_{l_t}|=j\}\Big|.
\]
\end{thm}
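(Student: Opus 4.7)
The plan is to build directly on Theorem \ref{betti}: since each edge has a free vertex, the Taylor resolution of $R/I(\mathcal{H})$ is already minimal, so I can read the graded Betti numbers off the combinatorics of the Taylor complex without further homological computation.

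First, I will recall the structure of the Taylor complex of $I(\mathcal{H}) = (x^{E_1}, \ldots, x^{E_n})$. In homological degree $i$ it has one free summand $R(-\mathbf{a}_S)$ for each $i$-subset $S = \{l_1 < \cdots < l_i\} \subseteq [n]$, where the shift $\mathbf{a}_S$ is the exponent vector of $\mathrm{lcm}(x^{E_{l_1}}, \ldots, x^{E_{l_i}}) = x^{\bigcup_{t=1}^i E_{l_t}}$. In particular the total degree of $\mathbf{a}_S$ equals $|\bigcup_{t=1}^i E_{l_t}|$. Because the Taylor resolution is minimal by Theorem \ref{betti}, each such summand contributes exactly $1$ to $\beta_{i, \mathbf{a}_S}$. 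Summing over multidegrees of a fixed total degree $j$ then gives
$$\beta_{i,j}(R/I(\mathcal{H})) = \Big|\{S \sse [n] : |S|=i,\; \big|\textstyle\bigcup_{l\in S} E_l\big| = j\}\Big|,$$
which counts $i$-subsets of edges rather than distinct unions.

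The only remaining step is to identify this count with the count of distinct unions appearing in the statement. For this I will show the assignment $S \mapsto \bigcup_{l \in S} E_l$ is injective on $i$-subsets. This is where the free-vertex hypothesis is used a second time: choosing for each edge $E_l$ a free vertex $v_l$, the equivalence $v_l \in \bigcup_{l' \in S} E_{l'} \iff l \in S$ holds by definition of free vertex. Consequently two distinct $i$-subsets $S \neq S'$ differ by some $l$ whose free vertex $v_l$ lies in exactly one of the two unions, so the unions differ. Injectivity then turns the count of subsets into a count of distinct unions, yielding the stated formula.

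I do not anticipate any real obstacle here: once Theorem \ref{betti} is in hand, the argument is essentially combinatorial, and the injectivity observation is a one-line consequence of the free-vertex condition. The only thing to be slightly careful about is the passage from the total Betti number $\beta_i = \binom{n}{i}$ of Theorem \ref{betti} to the finer graded information $\beta_{i,j}$, which is why the multigraded form of the Taylor resolution must be invoked rather than just its underlying rank.
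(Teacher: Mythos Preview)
Your proof is correct and follows essentially the same approach as the paper: both rely on Theorem \ref{betti} to conclude that the Taylor resolution is minimal, and then read the graded Betti numbers off the degrees of the Taylor basis elements, using that $\deg\big(\mathrm{lcm}(x^{E_{k_1}},\ldots,x^{E_{k_i}})\big)=\big|\bigcup_{l=1}^i E_{k_l}\big|$ for squarefree monomials.

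The one point where you go beyond the paper's own proof is the injectivity of $S\mapsto\bigcup_{l\in S}E_l$: the paper tacitly identifies the number of $i$-subsets $S$ with $\big|\bigcup_{l\in S}E_l\big|=j$ and the number of \emph{distinct} unions of size $j$, whereas you explicitly justify this using the free vertices $v_l$. This is a genuine (if small) gap in the paper's argument that you have filled, since the right-hand side of the statement is literally a set cardinality. So your version is in fact slightly more complete, but the underlying strategy is identical.
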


\begin{proof}
Since each edge of $\mathcal{H}$ has a free vertex, it is enough
to find the number of basis elements $e_{k_1,\ldots,k_i}$ of degree
$j$ in the Taylor resolution of $R/I(\mathcal{H})$. We have
$\deg(e_{k_1,\ldots,k_i})=\deg({\rm lcm}(x^{E_{k_1}},\ldots,x^{E_{k_i}}))$. 
Since all $f_{k_l}$ are squarefree, 
$\deg({\rm lcm}(x^{E_{k_1}},\ldots,x^{E_{k_i}}))=|\bigcup_{l=1}^iE_{k_l}|$, which
completes the proof.
\end{proof}

We have the following corollary:

\begin{cor}\label{b1}
Let $L_n^{d,\alpha}$ be a line hypergraph such that $d>2\alpha$. Then $$
\beta_{n,j}(R/I(L_n^{d,\alpha}))= \left\lbrace
\begin{array}{c l}
0 & \text{if $j\neq n(d-\alpha)+\alpha$}\\
1 & \text{if $j=n(d-\alpha)+\alpha$}
\end{array}
\right.$$
\end{cor}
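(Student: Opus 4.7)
The plan is to deduce this as a direct consequence of Theorem \ref{u}. First I would check that the hypothesis of that theorem is satisfied, namely that every edge of $L_n^{d,\alpha}$ has a free vertex. By property (i) of the definition, the only edges meeting $E_i$ are $E_{i-1}$ and $E_{i+1}$, and by (ii) each of these intersections has size $\alpha$. Hence the vertices of $E_i$ that lie in some other edge form a subset of $(E_i\cap E_{i-1})\cup(E_i\cap E_{i+1})$, which has cardinality at most $2\alpha<d=|E_i|$. Thus $E_i$ contains at least one vertex lying in no other edge, i.e.\ a free vertex.

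Next I would apply Theorem \ref{u} with $i=n$. Since the only way to choose indices $1\le l_1<\cdots<l_n\le n$ is $l_t=t$, the set appearing in the theorem has at most one element, namely $\bigcup_{t=1}^n E_t$. Therefore $\beta_{n,j}(R/I(L_n^{d,\alpha}))=1$ exactly when $j=|\bigcup_{t=1}^n E_t|$ and is $0$ otherwise.

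It then remains to evaluate $|\bigcup_{t=1}^n E_t|$. I would use inclusion--exclusion. By property (i), for $i<j$ we have $E_i\cap E_j=\emptyset$ unless $j=i+1$. Moreover any triple intersection $E_i\cap E_{i+1}\cap E_{i+2}$ is contained in $E_i\cap E_{i+2}$, which is empty, so all intersections of three or more edges vanish. Hence
\[
\Bigl|\bigcup_{t=1}^n E_t\Bigr|=\sum_{t=1}^n |E_t|-\sum_{t=1}^{n-1}|E_t\cap E_{t+1}|=nd-(n-1)\alpha=n(d-\alpha)+\alpha,
\]
which gives the claimed value of $j$. The only mildly delicate point is justifying the vanishing of higher intersections, but this is immediate from the line structure, so I do not expect any real obstacle.
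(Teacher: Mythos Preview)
Your proof is correct and follows essentially the same route as the paper: apply Theorem~\ref{u} at $i=n$ after noting that $d>2\alpha$ forces every edge to have a free vertex, then compute $|\bigcup_{t=1}^n E_t|=n(d-\alpha)+\alpha$. The paper's version is simply terser, asserting the free-vertex property and the size of the union without spelling out the $2\alpha<d$ count or the inclusion--exclusion.
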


\begin{proof}
Since $d>2\alpha$, each edge has a free vertex. Thus by Theorem
\ref{u},
\[
\beta_{n,j}(R/I(L_n^{d,\alpha}))=\Big|\{\bigcup_{t=1}^n
E_{l_t}; \ 1\leq l_1<\cdots <l_n\leq n, \ |\bigcup_{t=1}^n
E_{l_t}|=j\}\Big|.
\]
Therefore it equals $1$ if $j=|\bigcup_{i=1}^n E_i|$ and $0$ if 
$j\neq |\bigcup_{i=1}^n E_i|$. Since $|\bigcup_{i=1}^n E_i|
=n(d-\alpha)+\alpha$, the assertion holds.
\end{proof}

Let $s_i$ $(1\leq i\leq r)$ be positive integers and $L_n^{d,\alpha}$ be 
a line hypergraph of length $n$. Set $E(s_1,\ldots,s_r,n)=\{\mc{H}\,;\,\ 
\mc{H} $ is a subhypergraph of $L_n^{d,\alpha}$, which is comprised of $r$ disjoint
line hypergraphs of lengths $s_1,\ldots,s_r$ with no isolated vertex\}. Then we 
have the following lemma:

\begin{lem}\label{l}
Let $s_1,\ldots,s_r$ be positive integers such that
$1\le s_1=\cdots=s_{l_1}<s_{l_1+1}=\cdots=s_{l_1+l_2}<s_{l_1+l_2+1}=\cdots=
s_{l_1+l_2+l_3}<\cdots<s_{l_1+\cdots+l_{t-1}+1}=\cdots=
s_{l_1+\cdots+l_t}=s_r$ and $s_1+\cdots+s_r=i$, then 
$|E(s_1,\ldots,s_r,n)|=\frac{r!}{l_1!\cdots l_t!}\genfrac(){0cm}{0}{n-i+1}{r}$.
\end{lem}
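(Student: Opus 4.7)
The plan is to reduce the counting to a standard stars-and-bars problem, after identifying subhypergraphs in $E(s_1,\ldots,s_r,n)$ with configurations of non-overlapping intervals in $\{1,\ldots,n\}$.

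First I would observe, using property $(i)$ of a line hypergraph, that two edges $E_i, E_j$ share a vertex if and only if $|i-j|=1$. Therefore a subhypergraph $\mathcal{H}\sse L_n^{d,\alpha}$ decomposes into $r$ disjoint line hypergraphs exactly when its edge set is the disjoint union of $r$ maximal ``runs'' of consecutively indexed edges $[a_k,b_k]=\{E_{a_k},\ldots,E_{b_k}\}$, $k=1,\ldots,r$, ordered left to right, with the constraint $a_{k+1}\ge b_k+2$ (otherwise the $k$-th and $(k{+}1)$-st runs would share a vertex and merge). The condition that $\mc H\in E(s_1,\ldots,s_r,n)$ then becomes: the multiset $\{b_k-a_k+1\}_{k=1}^{r}$ equals the multiset $\{s_1,\ldots,s_r\}$.

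Second, I would enumerate by first choosing the sequence of run-lengths as read left to right. Such a sequence is a permutation of the multiset $\{s_1,\ldots,s_r\}$; by hypothesis the multiplicities of the distinct values are $l_1,\ldots,l_t$, so the number of such sequences is the multinomial
\[
\frac{r!}{l_1!\cdots l_t!}.
\]
For a \emph{fixed} left-to-right length sequence $(a_1,\ldots,a_r)$ with $\sum a_k=i$, the placements are parametrized by gap variables: let $g_0\ge 0$ be the number of edges preceding the first run, $g_r\ge 0$ the number following the last, and $g_1,\ldots,g_{r-1}\ge 1$ the gaps between consecutive runs. Then
\[
g_0+g_1+\cdots+g_r = n-i.
\]
After the substitution $g_k'=g_k-1$ for $1\le k\le r-1$, this becomes an equation in nonnegative integers summing to $n-i-(r-1)$ in $r+1$ variables, with exactly $\binom{n-i+1}{r}$ solutions.

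Multiplying the two counts gives $|E(s_1,\ldots,s_r,n)|=\frac{r!}{l_1!\cdots l_t!}\binom{n-i+1}{r}$, as claimed. There is no real obstacle here; the only point that deserves care is the gap condition $a_{k+1}\ge b_k+2$, since a smaller gap would either make two runs share a vertex (hence they would not be disjoint line hypergraphs) or coalesce them into a single longer run, changing the length multiset.
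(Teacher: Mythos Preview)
Your proof is correct and follows essentially the same approach as the paper: both factor out the multinomial $\frac{r!}{l_1!\cdots l_t!}$ by fixing the left-to-right order of run lengths, then count placements via gap variables $g_0,g_r\ge 0$, $g_1,\ldots,g_{r-1}\ge 1$ summing to $n-i$, and finish with the same stars-and-bars substitution to obtain $\binom{n-i+1}{r}$. Your presentation is in fact a bit cleaner, since the paper constructs the bijection to the tuple set $\{(t_1,\ldots,t_{r+1})\}$ through a four-case analysis depending on whether the first and last edges of $L_n^{d,\alpha}$ lie in $\mathcal H$, whereas you parametrize directly by the gaps.
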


\begin{proof}
Let $L_n^{d,\alpha}$ be a line hypergraph of length $n$ and $S$ be the set
of hypergraphs $\mc{H}\in E(s_1,\ldots,s_r,n)$ such that $\mc{H}$ is comprised of
line hypergraphs $Q_1,\ldots,Q_r$ such that the length of $Q_i$ is $s_i$ 
and for any $1\leq i<j\leq r$, if $v_\lambda\in \mc{X}(Q_i)$ and $v_\gamma\in
\mc{X}(Q_j)$, then $\lambda<\gamma$. It can be seen that
$|E(s_1,\ldots,s_r,n)|=\frac{r!}{l_1!\cdots l_t!}|S|$. We claim that
there is a bijection between $S$ and the set
$\{(t_1,\ldots,t_{r+1}),\ t_1,t_{r+1}\geq 0,t_2,\ldots,t_{r}\geq 1,\
\sum_{l=1}^{r+1}t_l=n-i\}$. For any $\mc{H}\in S$, let $\mc{H}'$ be the subhypergraph
of $L_n^{d,\alpha}$ with edge set $\mc{E}(L_n^{d,\alpha})\setminus\mc{E}(\mc{H})$. Assume 
$\mc{H}'$ is comprised of $l$ line hypergraphs $Q'_1,\ldots,Q'_l$ such that for any $1\leq
i<j\leq l$, if $v_\lambda\in \mc{X}(Q'_i)$ and $v_\gamma\in\mc{X}(Q'_j)$,
then $\lambda<\gamma$. We have $r-1\leq l\leq r+1$. Set
$v_{\mc{H}}=(t_1,\ldots,t_{r+1})$, where $t_i(1\leq i\leq r+1)$ are as
follows:\\

\begin{itemize}
\item[$(i)$]If $l=r-1$, set $t_1,t_{r+1}=0$ and $t_i=l(Q'_{i-1})$ for any
$i,(2\leq i\leq r)$.
\item[$(ii)$]If $l=r$ and $v_0\in \mc{X}(Q_1)$, set $t_1=0$ and $t_i=l(Q'_i)$
for any
$i,(2\leq i\leq r+1)$.
\item[$(iii)$]If $l=r$ and $v_0\in \mc{X}(Q'_1)$, set $t_{r+1}=0$ and
$t_i=l(Q'_i)$ for any $i,(1\leq i\leq r)$.
\item[$(iv)$]If $l=r+1$, set $t_i=l(Q'_i)$ for any
$i,(1\leq i\leq r+1)$.
\end{itemize}

We have $\sum_{k=1}^{r+1}t_k=\sum_{k=1}^{l} l(Q'_k)=n-i$. Define a function
\[
\phi:S\longrightarrow \{(t_1,\ldots,t_{r+1}),\ t_1,t_{r+1}\geq
0,t_2,\ldots,t_{r}\geq 1,\ \sum_{l=1}^{r+1}t_l=n-i\}
\] 
by $\phi(\mc{H})=v_{\mc{H}}$. It is easy to see that $\phi$ is a bijection and hence 
\[
|S|=\Big|\{(t_1,\ldots,t_{r+1}),\ t_1,t_{r+1}\geq
0,t_2,\ldots,t_{r}\geq 1,\
\sum_{l=1}^{r+1}t_l=n-i\}\Big|
\]
\[
=\Big|\{(t'_1,\ldots,t'_{r+1}),\
t'_1,\ldots,t'_{r+1}\geq 1,\
\sum_{l=1}^{r+1}t'_l=n-i+2\}\Big|=\genfrac(){0cm}{0}{n-i+1}r.
\] 
Thus $|E(s_1,\ldots,s_r,n)|=\frac{r!}{l_1!\cdots l_t!}\genfrac(){0cm}{0}{n-i+1}r$.
\end{proof}

We now give the graded Betti numbers of a line hypergraphs
$L_n^{d,\alpha}$ in the case when $d>2\alpha$.

\begin{thm}\label{P}
Let $i<n$ be an integer and let $L_n^{d,\alpha}$ be a line hypergraph such
that $d>2\alpha$. Then
$\beta_{i,id-\alpha(i-r)}(R/I(L_n^{d,\alpha}))=
\genfrac(){0cm}{0}{i-1}{r-1}\genfrac(){0cm}{0}{n-i+1}r$ for any $r$, $1\leq r\leq i$ and
$\beta_{i,j}(R/I(L_n^{d,\alpha}))=0$ for all other $j$.
\end{thm}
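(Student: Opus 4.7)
The plan is to reduce the Betti number to a combinatorial count via Theorem \ref{u}, analyze the structure of subsets of $\mc{E}(L_n^{d,\alpha})$, and then harvest the final count from Lemma \ref{l}. Since $d>2\alpha$, every edge of $L_n^{d,\alpha}$ contains at least $d-2\alpha\ge 1$ vertices not shared with a neighbouring edge, so every edge has a free vertex and Theorem \ref{u} applies. Therefore $\beta_{i,j}(R/I(L_n^{d,\alpha}))$ equals the number of $i$-element edge-subsets $\{E_{l_1},\ldots,E_{l_i}\}$ (with $l_1<\cdots<l_i$) whose union has cardinality exactly $j$.

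The key structural observation is that any such $i$-subset decomposes uniquely into $r$ maximal runs of consecutive edges of $L_n^{d,\alpha}$, for some $1\le r\le i$. Let $s_1,\ldots,s_r$ be the lengths of these runs, so $s_1+\cdots+s_r=i$. By property $(i)$ of the line hypergraph, edges from distinct runs are vertex-disjoint, and a single run of length $s_k$ covers $s_kd-(s_k-1)\alpha=s_k(d-\alpha)+\alpha$ vertices. Summing gives
\[
\Big|\bigcup_{t=1}^i E_{l_t}\Big|=\sum_{k=1}^r\bigl(s_k(d-\alpha)+\alpha\bigr)=id-\alpha(i-r).
\]
Since the values $id-\alpha(i-r)$, $1\le r\le i$, are pairwise distinct (note that $\alpha\ge 1$, otherwise properties $(i)$ and $(ii)$ of the definition would be in contradiction), this takes care of the vanishing claim.

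For the formula, I would invoke Lemma \ref{l}: the $i$-subsets giving rise to a weakly increasing sequence $1\le s_1\le\cdots\le s_r$ with $\sum s_k=i$ are counted by $|E(s_1,\ldots,s_r,n)|=\frac{r!}{l_1!\cdots l_t!}\binom{n-i+1}{r}$. Summing over all such partitions and using the standard identity $\sum\frac{r!}{l_1!\cdots l_t!}=\binom{i-1}{r-1}$, where the sum ranges over partitions of $i$ into $r$ positive parts and both sides count ordered compositions of $i$ into $r$ positive parts, yields $\beta_{i,id-\alpha(i-r)}(R/I(L_n^{d,\alpha}))=\binom{i-1}{r-1}\binom{n-i+1}{r}$. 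The main obstacle I foresee is really this final bookkeeping step, i.e.\ passing from the partition-indexed counts furnished by Lemma \ref{l} to a clean product of two binomials; once the composition identity is in hand the pieces assemble cleanly, and the characteristic-independence is inherited from Theorem \ref{u}.
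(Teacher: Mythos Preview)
Your proposal is correct and follows essentially the same route as the paper: reduce to the combinatorial count via Theorem \ref{u}, decompose an $i$-subset of edges into $r$ maximal runs to see that $j=id-\alpha(i-r)$, and then sum the counts from Lemma \ref{l} over partitions using the composition identity $\sum\frac{r!}{l_1!\cdots l_t!}=\binom{i-1}{r-1}$. The paper phrases this last step via auxiliary numbers $P_{(l_1,\ldots,l_m)}$ counting compositions with a fixed multiplicity pattern, but the content is identical; your explicit remark that $\alpha\ge 1$ forces the values $id-\alpha(i-r)$ to be pairwise distinct is a nice touch that the paper leaves implicit.
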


\begin{proof}
Let $i<n$ and $j$ be integers such that
$\beta_{i,j}(R/I(L_n^{d,\alpha}))\neq 0$. Since $d>2\alpha$,
each edge has a free vertex. Thus as was shown in Theorem \ref{u},
$\beta_{i,j}(R/I(L_n^{d,\alpha})=|\{\bigcup_{t=1}^i E_{l_t};
\ \ |\bigcup_{t=1}^i E_{l_t}|=j\}|$. Let $E_{l_1},\ldots, E_{l_i}$
be some edges of $L_n^{d,\alpha}$ such that $|\bigcup_{t=1}^i
E_{l_t}|=j$ and let $\mathcal{H}$ be the subhypergraph of
$L_n^{d,\alpha}$ with edge set $\{E_{l_1},\ldots,E_{l_i}\}$ and
assume that $\mathcal{H}$ is comprised of $r$ line hypergraphs which are
of lengths $s_1,\ldots,s_r$. Then $s_1+\cdots+s_r=i$. Let
$L_t^{d,\alpha}\subseteq \mathcal{H}$ be the line hypergraph of length $s_t$, 
so that $|\mc{X}(L_t^{d,\alpha})|=s_td-\alpha(s_t-1)$. Therefore
$|\mc{X}(\mathcal{H})|=\sum_{t=1}^r (s_td-\alpha(s_t-1))=id-\alpha(i-r)$ 
and $j=id-\alpha(i-r)$ for some $1\leq r\leq i$. Hence
\[
\beta_{i,id-\alpha(i-r)}(R/I(L_n^{d,\alpha}))=\sum_{1\leq
s_1\leq \cdots\leq s_r, s_1+\cdots+s_r=i}|E(s_1,\ldots,s_r,n)|
\]
and $\beta_{i,j}(R/I(L_n^{d,\alpha}))=0$ for those $j$ that can
not be written in the form $id-\alpha(i-r)$ for some $r$. Let
$l_1,\ldots,l_m$ be positive integers and $P_{(l_1,\ldots,l_m)}$ be
the number of solutions of $x_1+\cdots+x_r=i$ such that $x_j\geq
1 (1\leq j\leq r)$ and $l_i$ components of $(x_1,\ldots,x_r)$ are
equal for all $i$, $1\leq i\leq m$. Then $\sum_{m\geq 1,l_i\geq
1}P_{(l_1,\ldots,l_m)}=\genfrac(){0cm}{0}{i-1}{r-1}$. Also the number of
solutions of $x_1+\cdots+x_r=i$ such that $1\leq
x_1=\cdots=x_{l_1}<x_{l_1+1}=\cdots=x_{l_1+l_2}<x_{l_1+l_2+1}=\cdots=
x_{l_1+l_2+l_3}<\cdots<x_{l_1+\cdots+l_{m-1}+1}=\cdots=
x_{l_1+\cdots+l_m}=x_r$ is equal to $P_{(l_1,\ldots,l_m)}
\frac{l_1!\cdots l_m!}{r!}$. Thus using Lemma \ref{l} we see that
\[
\beta_{i,id-\alpha(i-r)}(R/I(L_n^{d,\alpha}))=\sum_{m\geq 1,l_i\geq 1}
P_{(l_1,\ldots,l_m)} \frac{l_1!\cdots l_m!}{r!}\\
\frac{r!}{l_1!\cdots l_m!}\genfrac(){0cm}{0}{n-i+1}r
\]
\[
=\sum_{m\geq 1,l_i\geq 1}
P_{(l_1,\ldots,l_m)} \genfrac(){0cm}{0}{n-i+1}r=\genfrac(){0cm}{0}{i-1}{r-1}
\genfrac(){0cm}{0}{n-i+1}r.
\]
The proof is complete.
\end{proof}

We now consider $L_n^{d,\alpha}$ in the case where $d=2\alpha$.

\begin{thm}\label{PI}
Let $L_n^{d,\alpha}$ be a line hypergraph such that $d=2\alpha$. Then the
non zero Betti numbers of $R/I(L_n^{d,\alpha})$ are, for $2j\geq i$, 
as follows:
\[
\beta_{i,j\alpha}(R/I(L_n^{d,\alpha}))=\gf{j-i},{2i-j}
\gf{n+1-2j+2i},{j-i}+\gf{j-i-1},{2i-j}\gf{n+1-2j+2i},{j-i-1}.
\]
\end{thm}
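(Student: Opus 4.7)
The strategy is induction on $n$ via a Betti splitting of $I:=I(L_n^{d,\alpha})$. Since $d=2\alpha$, the vertex set decomposes into $n+1$ disjoint blocks $B_0,B_1,\ldots,B_n$, each of cardinality $\alpha$, with $E_k=B_{k-1}\cup B_k$ for every $k$. Setting $J=(x^{E_n})$ and $K=I(L_{n-1}^{d,\alpha})$, one has $I=J+K$. The key observation is that every variable in $B_n$ divides $x^{E_n}$ but no other minimal generator of $I$; this guarantees that $I=J+K$ is a Betti splitting, so
\[
\beta_{i,j\alpha}(I)=\beta_{i,j\alpha}(J)+\beta_{i,j\alpha}(K)+\beta_{i-1,j\alpha}(J\cap K).
\]

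Computing the intersection using $E_k\cap E_n=\emptyset$ for $k\leq n-2$ and $E_{n-1}\cap E_n=B_{n-1}$ yields
\[
J\cap K=x^{E_n}\cdot(x^{E_1},\ldots,x^{E_{n-2}},\,x^{B_{n-2}}).
\]
Since $B_{n-2}\subseteq E_{n-2}$, the generator $x^{E_{n-2}}$ is redundant, and the remaining ideal factors as $L:=I(L_{n-3}^{d,\alpha})+(x^{B_{n-2}})$ with the two summands supported on the disjoint variable sets $B_0\cup\cdots\cup B_{n-3}$ and $B_{n-2}$. K\"unneth then gives
\[
\beta_{i,j\alpha}(R/L)=\beta_{i,j\alpha}(R/I(L_{n-3}^{d,\alpha}))+\beta_{i-1,(j-1)\alpha}(R/I(L_{n-3}^{d,\alpha})).
\]
Accounting for the $d=2\alpha$ degree shift from multiplication by $x^{E_n}$ and feeding everything into the Betti splitting formula produces the recursion
\[
\beta_{i,j\alpha}(R/I)=\beta_{i,j\alpha}(R/I(L_{n-1}^{d,\alpha}))+\beta_{i-1,(j-2)\alpha}(R/I(L_{n-3}^{d,\alpha}))+\beta_{i-2,(j-3)\alpha}(R/I(L_{n-3}^{d,\alpha}))
\]
valid for $i\geq 2$.

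The remaining step is to check that the claimed two-term expression $A(n,i,j)+B(n,i,j)$ (the two binomial products in the statement) satisfies this recursion. Two applications of Pascal's identity suffice: first to combine the four contributions from the two evaluations of $A+B$ at $n-3$ into $\binom{j-i}{2i-j}\binom{n-2j+2i}{j-i-1}+\binom{j-i-1}{2i-j}\binom{n-2j+2i}{j-i-2}$, and second to recognize this as $(A+B)(n,i,j)-(A+B)(n-1,i,j)$. The base cases $n=1,2$, together with the rows $i=0,1$ (which are excluded from the recursion), are verified directly against Theorem~\ref{betti} for $n=1$ and by inspection of the one- and two-generator ideals for $n=2$.

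The main obstacle is justifying that $I=J+K$ really is a Betti splitting; equivalently, that the connecting maps in the long exact Tor sequence attached to $0\to R/(K:x^{E_n})(-d)\to R/K\to R/I\to 0$ vanish. The presence of the block $B_n$ of ``free'' variables occurring only in $x^{E_n}$ is what forces this, but the justification must be spelled out carefully. Once this is in place, the rest is a routine binomial manipulation.
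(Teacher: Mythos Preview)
Your approach is correct, but it is genuinely different from the paper's. The paper's proof is a two-line reduction: since $d=2\alpha$, each edge $E_k=B_{k-1}\cup B_k$ factors as $x^{E_k}=X_kX_{k+1}$ with $X_k=x^{B_{k-1}}$ a squarefree monomial of degree $\alpha$. Treating the $X_i$ as new variables turns $I(L_n^{d,\alpha})$ into the edge ideal of an ordinary path $P_{n+1}$, and the Betti numbers follow (after rescaling degrees by $\alpha$) from Jacques' formula for paths (Theorem~7.7.34 in \cite{Ja}). No induction, no splitting; the work is delegated to a known graph result.

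Your argument instead re-derives that path formula from scratch, carrying the $\alpha$ along. The Betti-splitting step is sound: every multigraded Betti number of $J\cap K$ (and of $R/(K:x^{E_n})(-d)$) has full support on the block $B_n$, while every Betti number of $K$ has zero support on $B_n$, so the induced Tor maps vanish for multidegree reasons. The K\"unneth step and the Pascal verification are both correct; the four $(n-3)$ terms combine exactly as you describe. One small point to watch: the recursion reaches back to $n-3$, so besides $n=1,2$ you also need $n=0$ (or $n\le 0$) as a base case, interpreting $I(L_0^{d,\alpha})=(0)$; this is harmless but should be stated.

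What each approach buys: the paper's proof is short and conceptual but imports the entire path-graph computation as a black box. Your proof is self-contained and exposes the recursive structure explicitly, at the cost of a binomial verification. If you want to match the paper's brevity, simply observe the change of variables $X_k=x^{B_{k-1}}$ and cite Jacques; if you want a stand-alone argument, your induction does the job.
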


\begin{proof}
Let $\mc{E}(L_n^{d,\alpha})=\{E_1,\ldots,E_n\}$, where
$E_i=\{x_{1,i},\ldots,x_{d,i}\}$. Set $X_i=x_{1,i}\cdots
x_{\alpha,i}$ and $X_{i+1}=x_{\alpha+1,i}\cdots x_{d,i}$ for any
$i$, $1\leq i\leq n$. Here $\{x_{\alpha+1,i}\cdots x_{d,i}\}$ are, for every 
$i=1,\ldots,n-1$, the vertices in the intersection $E_i\cap E_{i+1}$. 
Then $I(L_n^{d,\alpha})=(X_1X_2,\ldots,X_nX_{n+1})$. Since the $X_i$'s are independent 
variables and $\deg(X_i)=\alpha$, we have
$\beta_{i,j\alpha}(R/I(L_n^{d,\alpha}))=\beta_{i,j}(R/(X_1X_2,\ldots,
X_nX_{n+1}))$. The result now follows, using Theorem $7.7.34$ of \cite{Ja}.
\end{proof}

\subsection{Betti Numbers of Hypercycles}

We start by giving a corollary that is similar to Corollary \ref{b1}.

\begin{cor}\label{b}
Let $C_n^{d,\alpha}$ be a hypercycle such that each edge has a free
vertex. Then $ \beta_{n,j}(R/I(C_n^{d,\alpha}))= \left\lbrace
\begin{array}{c l}
0 & \text{if $j\neq n(d-\alpha)$}\\
1 & \text{if $j=n(d-\alpha)$}
\end{array}
\right.$
\end{cor}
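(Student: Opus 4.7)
The plan is to apply Theorem~\ref{u} directly, just as in the proof of Corollary~\ref{b1}, and then compute the size of the union of all edges of the hypercycle.

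First, since each edge of $C_n^{d,\alpha}$ has a free vertex, Theorem~\ref{u} gives
\[
\beta_{n,j}(R/I(C_n^{d,\alpha}))=\Big|\{\bigcup_{t=1}^n E_{l_t};\,1\le l_1<\cdots<l_n\le n,\,|\bigcup_{t=1}^n E_{l_t}|=j\}\Big|.
\]
There is exactly one strictly increasing sequence $1\le l_1<\cdots<l_n\le n$, namely $l_t=t$, so the set on the right is either empty or consists of the single element $\bigcup_{i=1}^n E_i$. Hence $\beta_{n,j}=1$ when $j=|\bigcup_{i=1}^n E_i|$ and $\beta_{n,j}=0$ otherwise.

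It remains to verify that $|\bigcup_{i=1}^n E_i|=n(d-\alpha)$. Since every edge contains a free vertex, the observation preceding Theorem~\ref{betti} forces $d>2\alpha$. Consequently, for each edge $E_i$ the two neighbouring intersections $E_{i-1}\cap E_i$ and $E_i\cap E_{i+1}$ (indices mod $n$) are subsets of $E_i$ of total cardinality $2\alpha<d$. Because non-consecutive edges are disjoint by definition of $C_n^{d,\alpha}$, for $n\ge 4$ we have $E_{i-1}\cap E_{i+1}=\emptyset$, which forces these two $\alpha$-element intersections inside $E_i$ to be disjoint. Thus each edge contributes $d-2\alpha$ vertices that appear in no other edge, plus $\alpha$ vertices shared with its successor (which will be counted once in the union). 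Summing over $i=1,\ldots,n$ yields
\[
\Big|\bigcup_{i=1}^n E_i\Big|=n(d-2\alpha)+n\alpha=n(d-\alpha).
\]
Substituting this value of $j$ into the formula above gives the stated result.

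There is no real obstacle here: the only point that requires any care is the inclusion-exclusion argument for $|\bigcup E_i|$, where one must rule out triple intersections. The free-vertex hypothesis delivers $d>2\alpha$, which together with the disjointness of non-adjacent edges in the cycle makes the intersections of an edge with its two neighbours automatically disjoint, so the count reduces to a direct summation.
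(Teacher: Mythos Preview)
Your proof is correct and follows essentially the same route as the paper: apply Theorem~\ref{u}, observe that the only length-$n$ increasing sequence gives the single set $\bigcup_{i=1}^n E_i$, and then evaluate $|\bigcup_{i=1}^n E_i|=n(d-\alpha)$. You supply more detail than the paper on this last count (the paper simply asserts it), including the observation that the free-vertex hypothesis forces $d>2\alpha$ and hence the two neighbouring intersections inside each $E_i$ are disjoint; note that your argument as written covers $n\ge 4$, while the case $n=3$ would need a word on why $E_{i-1}\cap E_i\cap E_{i+1}=\emptyset$, a point the paper also leaves implicit.
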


\begin{proof}
By Theorem \ref{u},
$\beta_{n,j}(R/I(C_n^{d,\alpha}))=\Big|\{\bigcup_{t=1}^n
E_{l_t}; \ 1\leq l_1<\cdots<l_n\leq n \ |\bigcup_{t=1}^n
E_{l_t}|=j\}\Big|$. Therefore it is equal to $1$ if $j=|\bigcup_{t=1}^n
E_{t}|$ and $0$ if $j\neq |\bigcup_{t=1}^n E_{t}|$. Since
$|\bigcup_{t=1}^n E_{t}|=n(d-\alpha)$, the assertion holds.
\end{proof}

We compute the Betti numbers of the hypercycle $C_n^{d,\alpha}$ in the same manner 
as in the previous section. That is, we consider the two cases $d>2\alpha$ and 
$d=2\alpha$ separetely.\\

Let $s_i$ $(1\leq i\leq r)$ be integers and $C_n^{d,\alpha}$ be a cycle of length $n$. 
Set $F(s_1,\ldots,s_r,n)\\=\{H\,;\,\ H $ is a subhypergraph of $C_n^{d,\alpha}$, which is
comprised of $r$ disjoint line hypergraphs of lengths $s_1,\ldots,s_r$ and with no 
isolated vertex\}, then we have the following lemma:

\begin{lem}\label{k}
Let $s_1,\ldots,s_r$ be positive integers such that
$s_1=\cdots=s_{l_1}<s_{l_1+1}=\cdots=s_{l_1+l_2}<s_{l_1+l_2+1}=\cdots
=s_{l_1+l_2+l_3}<\cdots<s_{l_1+\cdots+l_{t-1}+1}=\cdots=s_{l_1+\cdots+l_t}
=s_r$ and $s_1+\cdots+s_r=i$, then $|F(s_1,\ldots,s_r,n)|=
\frac{n(r-1)!}{l_1!\cdots l_t!}\gf{n-i-1},{r-1}$.
\end{lem}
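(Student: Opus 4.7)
The plan is to imitate the counting in Lemma \ref{l} but adapted to the cyclic setting, where the key differences are that (a) the gaps between arcs form $r$ blocks (not $r{+}1$), each of size at least one, and (b) the lack of distinguished endpoints forces us to break the rotational symmetry by marking a distinguished arc.

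First I would reduce to a purely combinatorial statement about cyclic binary strings. Since in $C_n^{d,\alpha}$ two distinct edges $E_i$ and $E_j$ share vertices iff $j\equiv i\pm 1\pmod n$ and then share $\alpha\ge 1$ vertices, a subhypergraph $\mc H\in F(s_1,\ldots,s_r,n)$ is uniquely determined by an edge subset that decomposes into $r$ \emph{cyclic arcs} (maximal runs of consecutive indices mod $n$) of lengths $s_1,\ldots,s_r$, any two of which are separated by at least one omitted edge. Thus counting $|F(s_1,\ldots,s_r,n)|$ amounts to counting such labelled cyclic configurations on $[n]$.

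Next I would count via \textbf{marked configurations}, i.e., pairs $(\mc H, Q)$ with $\mc H\in F(s_1,\ldots,s_r,n)$ and $Q$ a chosen line-subhypergraph of $\mc H$. Since each $\mc H$ has exactly $r$ line components,
\[
|\{\text{marked configurations}\}| \;=\; r\cdot |F(s_1,\ldots,s_r,n)|.
\]
I would then set up a bijection from marked configurations to triples $(p,\sigma,(g_1,\ldots,g_r))$, where $p\in\{1,\ldots,n\}$ is the starting edge-index of the marked arc $Q$, $\sigma=(s'_1,\ldots,s'_r)$ is the ordered sequence of arc lengths read off by traversing the cycle starting at $Q$, and $(g_1,\ldots,g_r)$ with $g_j\ge 1$ and $\sum g_j=n-i$ records the gap sizes between consecutive arcs. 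The inverse sends such a triple to the configuration that starts an arc of length $s'_1$ at position $p$, follows it with a gap of size $g_1$, then an arc of length $s'_2$, and so on cyclically.

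Finally I would multiply: there are $n$ choices of $p$, $\frac{r!}{l_1!\cdots l_t!}$ orderings of the multiset of arc lengths, and $\binom{n-i-1}{r-1}$ compositions $g_1+\cdots+g_r=n-i$ with each $g_j\ge 1$. Equating and dividing by $r$ yields
\[
|F(s_1,\ldots,s_r,n)| \;=\; \frac{n}{r}\cdot\frac{r!}{l_1!\cdots l_t!}\cdot\binom{n-i-1}{r-1} \;=\; \frac{n(r-1)!}{l_1!\cdots l_t!}\binom{n-i-1}{r-1}.
\]
The main subtlety, and the step most likely to hide an error, is verifying injectivity of the marking bijection when the lengths $s_j$ repeat: marking a specific arc (rather than an unmarked arc with a given length) is exactly what kills the rotational ambiguity, so distinct triples yield distinct marked configurations even if several arcs happen to share the same length. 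Edge cases ($n=i$, or $n-i-1<r-1$) give $\binom{n-i-1}{r-1}=0$, consistent with the geometric impossibility of fitting $r$ separated arcs.
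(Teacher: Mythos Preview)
Your proof is correct and takes a somewhat different route from the paper. The paper cuts the cycle open by pinning the arc of length $s_r$ to start at a specified edge, reducing to the line case: the remaining $r-1$ arcs sit inside a line of length $n-s_r-2$, whence $|F(s_1,\ldots,s_r,n)|=n\,|E(s_1,\ldots,s_{r-1},n-s_r-2)|$, and Lemma~\ref{l} is invoked. You instead mark an \emph{arbitrary} one of the $r$ arcs and count the resulting triples $(p,\sigma,g)$ directly, never appealing to Lemma~\ref{l}.

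Both arguments break the cyclic symmetry by distinguishing an arc, but your execution is cleaner. By marking any arc rather than only one of the maximal length $s_r$, you obtain the uniform overcount factor $r$, and the multiplicities $l_1,\ldots,l_t$ enter solely through the number $\tfrac{r!}{l_1!\cdots l_t!}$ of orderings of the multiset. In the paper's version the choice of an $s_r$-arc actually overcounts each configuration by the multiplicity $l_t$, and this is silently absorbed when Lemma~\ref{l} is quoted with denominator $l_1!\cdots l_t!$ instead of the correct $l_1!\cdots l_{t-1}!(l_t-1)!$ for the shortened list $s_1,\ldots,s_{r-1}$; the two slips cancel to give the right answer. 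Your double-counting sidesteps this delicacy entirely.
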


\begin{proof}
Let $E_1,\ldots,E_{s_r}\in\mc{E}(C_n^{d,\alpha})$. The number of subgraphs of
$C_n^{d,\alpha}$, which are comprised of $r$ line hypergraphs 
$Q_1,\ldots,Q_r$ of lengths $s_1,\ldots,s_r$ and $\mc{E}(Q_r)=\{E_1,\ldots,E_{s_r}\}$ 
is equal to the number of subhypergraphs of $L_{n-s_r-2}^{d,\alpha}$, which are comprised 
of $r-1$ line hypergraphs of lengths $s_1,\ldots,s_{r-1}$. Therefore
$|F(s_1,\ldots,s_r,n)|=n|E(s_1,\ldots,s_{r-1},n-s_r-2)|$, since the
line hypergraph $Q_r$ can start from any of the $n$ edges. By Lemma \ref{l},
$|E(s_1,\ldots,s_{r-1},n-s_r-2)|=\frac{(r-1)!}{l_1! \cdots
l_t!}\gf{n-i-1},{r-1}$, which completes the proof.
\end{proof}

We now give the Betti numbers of $C_n^{d,\alpha}$ in the case where $d>2\alpha$.

\begin{thm}\label{betti1}
Let $i<n$ be an integer and let $C_n^{d,\alpha}$ be a hypercycle
such that $d>2\alpha$. Then $\beta_{i,id-\alpha(i-r)}(R/I(C_n^{d,\alpha}))=\frac{n}{r}
\gf{i-1},{r-1}\gf{n-i-1},{r-1}$ for any $r$, $0\leq r\leq i$ and
$\beta_{i,j}(R/I(C_n^{d,\alpha}))=0$ for other $j$'s.
\end{thm}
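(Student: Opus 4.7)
The plan is to mirror the argument used for the line hypergraph in Theorem~\ref{P}, substituting Lemma~\ref{k} for Lemma~\ref{l} and taking care that the restriction $i<n$ prevents any chosen subhypergraph from wrapping all the way around the cycle.

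First, since $d>2\alpha$, every edge of $C_n^{d,\alpha}$ contains a free vertex, so Theorem~\ref{u} applies and
\[
\beta_{i,j}\bigl(R/I(C_n^{d,\alpha})\bigr)=\Bigl|\bigl\{\textstyle\bigcup_{t=1}^{i}E_{l_t}\,;\ 1\le l_1<\cdots<l_i\le n,\ |\bigcup_{t=1}^{i}E_{l_t}|=j\bigr\}\Bigr|.
\]
Fix such a choice of $i$ edges and let $\mathcal{H}$ be the induced subhypergraph. Because $i<n$, at least one edge of $C_n^{d,\alpha}$ is missing from $\mathcal{H}$, so $\mathcal{H}$ decomposes uniquely as a disjoint union of $r$ line hypergraphs of lengths $s_1,\ldots,s_r$ with $s_1+\cdots+s_r=i$, exactly as in the proof of Theorem~\ref{P}. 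The same cardinality count $|\mathcal{X}(L_{s_t}^{d,\alpha})|=s_t d-\alpha(s_t-1)$ gives $|\mathcal{X}(\mathcal{H})|=id-\alpha(i-r)$, so $\beta_{i,j}=0$ unless $j=id-\alpha(i-r)$ for some $1\le r\le i$ (and the $r=0$ term is vacuous since $\binom{i-1}{-1}=0$).

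Next I would count subhypergraphs in this combinatorial form. For a fixed tuple $(s_1,\ldots,s_r)$ with $s_1\le\cdots\le s_r$, the number of ways it appears as the edge-lengths of $\mathcal{H}$ inside $C_n^{d,\alpha}$ is $|F(s_1,\ldots,s_r,n)|$, which Lemma~\ref{k} evaluates to $\frac{n(r-1)!}{l_1!\cdots l_t!}\binom{n-i-1}{r-1}$. Introducing, as in the proof of Theorem~\ref{P}, the quantity $P_{(l_1,\ldots,l_m)}$ counting compositions of $i$ into $r$ parts with prescribed equality pattern (so that $\sum_{m,l_i} P_{(l_1,\ldots,l_m)}=\binom{i-1}{r-1}$), the same bookkeeping yields
\[
\beta_{i,id-\alpha(i-r)}\bigl(R/I(C_n^{d,\alpha})\bigr)=\sum_{m\ge1,l_i\ge1}P_{(l_1,\ldots,l_m)}\,\frac{l_1!\cdots l_m!}{r!}\cdot\frac{n(r-1)!}{l_1!\cdots l_m!}\binom{n-i-1}{r-1}.
\]

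The factorials cancel, leaving $\frac{n}{r}\binom{n-i-1}{r-1}\sum P_{(l_1,\ldots,l_m)}=\frac{n}{r}\binom{i-1}{r-1}\binom{n-i-1}{r-1}$, which is the claimed formula. The only genuinely new point, relative to the line case, is verifying that the factor $n$ (rather than $n-i+1$ with an extra $r$ in the binomial coefficient) comes out correctly, and this is the content of Lemma~\ref{k}; I expect the main bookkeeping obstacle to be keeping straight the hypothesis $i<n$, which is what guarantees the decomposition into $r\ge 1$ line hypergraphs and lets us apply Lemma~\ref{k} instead of treating the whole cycle (the case $i=n$) separately via Corollary~\ref{b}.
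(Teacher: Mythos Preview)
Your proposal is correct and follows essentially the same argument as the paper's own proof: apply Theorem~\ref{u} using the free vertices, decompose any $i$-edge subhypergraph into $r$ disjoint line hypergraphs (valid since $i<n$), compute $|\mathcal{X}(\mathcal{H})|=id-\alpha(i-r)$, and then sum $|F(s_1,\ldots,s_r,n)|$ via Lemma~\ref{k} and the $P_{(l_1,\ldots,l_m)}$ bookkeeping from Theorem~\ref{P}. Your explicit remark that $i<n$ is what guarantees the line-hypergraph decomposition (and that $r=0$ is vacuous) is a helpful clarification the paper leaves implicit.
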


\begin{proof}
Since $d>2\alpha$, each edge has a free vertex. Therefore as was
shown in Theorem \ref{u},
$\beta_{i,j}(R/I(C_n^{d,\alpha}))=|\{\bigcup_{t=1}^i E_{l_t};
\ \ |\bigcup_{t=1}^i E_{l_t}|=j\}|$. Let $E_{l_1},\ldots, E_{l_i}$
be some edges of $C_n^{d,\alpha}$ such that $|\bigcup_{t=1}^i
E_{l_t}|=j$. Let $\mathcal{H}$ be a  subhypergraph of
$C_n^{d,\alpha}$ with edges $\{E_{l_1},\ldots,E_{l_i}\}$ and assume
that $\mathcal{H}$ is comprised of $r$ line hypergraphs which are of
lengths $s_1,\ldots,s_r$. Then $s_1+\cdots+s_r=i$. Let $L_t^{d,\alpha}\subseteq
\mc{H}$ be a line hypergraph of length $s_t$, then, as in the proof of Theorem \ref{P},
$|\mc{X}(L_t^{d,\alpha})|=s_td-\alpha(s_t-1)$ and therefore
$|\mc{X}(\mathcal{H})|=\sum_{t=1}^r (s_td-\alpha(s_t-1))=id-\alpha(i-r)$.
Thus $j=id-\alpha(i-r)$ and we see that 
\[
\beta_{i,id-\alpha(i-r)}(R/I({C_{n}^{d,\alpha}}))=\sum_{1\leq
s_1\leq\cdots\leq s_r, s_1+\cdots+s_r=i}|F(s_1,\ldots,s_r,n)|
\]
and $\beta_{i,j}(R/I(C_n^{d,\alpha}))=0$ for all $j$ that can not
be written in the form $id-\alpha(i-r)$ for some $r$.

Construct the numbers $P_{(l_1,\ldots,l_m)}$ and $P_{(l_1,\ldots,l_m)}
\frac{l_1!\cdots l_m!}{r!}$ in the same way as in the proof of Theorem \ref{P}. 
Thus using Lemma \ref{k} we see that
\[
\beta_{i,id-\alpha(i-r)}(R/I({C_{n}^{d,\alpha}}))
=\sum_{m\geq 1,l_i\geq 1}P_{(l_1,\ldots,l_m)}\\ \frac{l_1!\cdots l_m!}{r!}
\frac{n(r-1)!}{l_1!\cdots l_m!}\gf{n-i-1},{r-1}=
\]
\[
=\frac{n}{r}\gf{i-1},{{r-1}}\gf{{n-i-1}},{{r-1}},
\]
and the proof is complete. 
\end{proof}

Now consider the case where $d=2\alpha$.

\begin{thm}\label{to}
Let $C_n^{d,\alpha}$ be a hypercycle such that $d=2\alpha$. Then the
non zero Betti numbers of $R/I(C_n^{d,\alpha})$ are all in degree 
$\alpha j$, where $j\leq n$ and are as follows:
\begin{itemize}
\item[$(i)$]{
If $j<n$ and $2i\geq j$, then $\beta_{i,\alpha
j}(R/I(C_n^{d,\alpha}))=\frac{n}{n-2(j-i)}\gf{j-i},{2i-j}\gf{n-2(j-i)},{j-i}$}
\item[$(ii)$]{
If $n\equiv 1$ mod $3$ , $\beta_{\frac{2n+1}{3},\alpha
n}(R/I(C_n^{d,\alpha}))=1$}
\item[$(iii)$]{
If $n\equiv 2$ mod $3$ , $\beta_{\frac{2n-1}{3},\alpha
n}(R/I(C_n^{d,\alpha}))=1$}
\item[$(iv)$]{If $n\equiv 0$ mod $3$ , $\beta_{\frac{2n}{3},\alpha
n}(R/I(C_n^{d,\alpha}))=2$.}
\end{itemize}
\end{thm}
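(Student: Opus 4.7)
The plan is to reduce the computation to the known Betti numbers of the edge ideal of an ordinary graph cycle $C_n$, in the same spirit as the proof of Theorem \ref{PI}. First I would set up the combinatorial decomposition of the vertex set. Since $d = 2\alpha$ and $|E_{i-1}\cap E_i| = |E_i\cap E_{i+1}| = \alpha$ (indices mod $n$), the two intersections fill $E_i$ exactly; and since non-consecutive edges are disjoint, the sets $E_i \cap E_{i+1}$ for $i=1,\ldots,n$ (indices mod $n$) form a partition of $\mc{X}(C_n^{d,\alpha})$ into $n$ blocks of size $\alpha$ each.

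Next, define $X_i$ to be the product of the variables in $E_{i-1}\cap E_i$, so that the $X_i$ are monomials of degree $\alpha$ in pairwise disjoint sets of variables, and each edge monomial factors as $x^{E_i} = X_i X_{i+1}$ (with $X_{n+1} = X_1$). Hence
\[
I(C_n^{d,\alpha}) = (X_1 X_2,\, X_2 X_3,\, \ldots,\, X_{n-1} X_n,\, X_n X_1).
\]
Because the $X_i$ involve disjoint variables, the standard substitution/depolarization argument applies: a minimal free resolution of $R'/(Y_1 Y_2, \ldots, Y_{n-1} Y_n, Y_n Y_1)$ over $R' = k[Y_1,\ldots,Y_n]$ pulls back, under $Y_i \mapsto X_i$, to a minimal free resolution of $R/I(C_n^{d,\alpha})$ with degrees scaled by $\alpha$. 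Consequently,
\[
\beta_{i,\,\alpha j}\bigl(R/I(C_n^{d,\alpha})\bigr) \;=\; \beta_{i,\,j}\bigl(R'/I(C_n)\bigr),
\]
where $I(C_n)$ is the edge ideal of the ordinary graph cycle on $n$ vertices. In particular, the nonzero Betti numbers can only occur in internal degrees that are multiples of $\alpha$, and the upper bound $j \leq n$ from the statement is just the fact that $R'/I(C_n)$ has regularity-plus-projective-dimension at most $n$.

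With this reduction in hand, the four cases of the theorem become the known formulas for the graded Betti numbers of the edge ideal of $C_n$, which is precisely the content of the relevant theorem in Jacques \cite{Ja} (the cyclic analogue of Theorem $7.7.34$ used in Theorem \ref{PI}). Case $(i)$ with $j < n$ is the generic middle range, where the Betti number is expressed by the two binomial products in Jacques' formula, and cases $(ii)$, $(iii)$, $(iv)$ correspond to the top-degree Betti number, which is well known to depend on the residue of $n$ modulo $3$ (reflecting the fact that the independence complex of $C_n$ has nontrivial reduced homology precisely when $n \equiv 0, 1, 2 \pmod 3$, with the rank given there). The only obstacle worth flagging is a purely bookkeeping one: translating Jacques' formulation into the $\bigl(\genfrac(){0cm}{0}{j-i}{2i-j},\,\genfrac(){0cm}{0}{n-2(j-i)}{j-i}\bigr)$-format of part $(i)$ and matching the exponents of $n \bmod 3$ in parts $(ii)$–$(iv)$; no new homological input is required beyond the cited theorem.
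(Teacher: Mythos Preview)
Your proposal is correct and follows essentially the same approach as the paper: define the block monomials $X_i$ so that $I(C_n^{d,\alpha})=(X_1X_2,\ldots,X_{n-1}X_n,X_nX_1)$, observe that the $X_i$ are supported on disjoint variable sets so the Betti numbers scale by $\alpha$ from those of the ordinary cycle $C_n$, and then invoke Jacques' result (the paper cites it as Theorem~7.6.28 of \cite{Ja}). Your write-up is somewhat more explicit about why the intersections $E_{i}\cap E_{i+1}$ partition the vertex set and why the substitution preserves minimality, but the argument is the same.
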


\begin{proof}
Let $\mc{E}(C_n^{d,\alpha})=\{E_1,\ldots,E_n\}$, where
$E_i=\{x_{1,i},\ldots,x_{d,i}\}$. Set $X_i=x_{1,i}\cdots
x_{\alpha,i}$ and $X_{i+1}=x_{\alpha+1,i}\cdots x_{d,i}$ for any
$i$, $1\leq i\leq n-1$, where $X_i$ and $X_{i+1}$ denote the same things as they do in 
the proof of Theorem \ref{PI}. Then 
\[
I(C_n^{d,\alpha})=(X_1X_2,\ldots,
X_{n-1}X_n,X_nX_1).
\]
Since $X_i$ are independent variables and 
$\deg(X_i)=\alpha$, we have $\beta_{i,j\alpha}(R/I(C_n^{d,\alpha}))=
\beta_{i,j}(X_1X_2,\ldots,X_{n-1}X_n,X_nX_1)$. Using Theorem $7.6.28$ 
of \cite{Ja}, the result follows.
\end{proof}

We recall from \cite{Ja} that a star graph is a complete bipartite graph $K_{1,n}$ for some $n$. One 
generalization of this graph is the $d$-complete bipartite hypergraph $K_{1,n}^d$. This kind of
hypergraphs are considered in \cite{E1}. Another way of generalizing the star graph is to 
focus more on its appearance. Considering the following picture of $K_{1,4}$ (here on vertex set 
$\{y\}\sqcup \{x_1,x_2,x_3,x_4\}$):
\[
\xymatrix{
x_1\ar@{-}[dr]&& x_2\ar@{-}[dl]\\
&y&\\
x_3\ar@{-}[ur]&& x_4\ar@{-}[ul]}
\]

it is motivated to say that the hypergraphs $\mc{H}$ considered in the next theorem also are
natural generalizations of the star graph.

\begin{thm}
Let $\mathcal{H}$ be a $d$-uniform hypergraph with edges
$E_{1},\ldots,E_{n}$ such that for any $i\neq j$, $|E_i\cap
E_j|=|\bigcap_{l=1}^n E_l|=\alpha$ and each edge has a free vertex.
Then $\beta_{i,j}(R/I(\mathcal{H}))\neq 0$ if and only if
$j=id-\alpha(i-1)$ and
$\beta_{i,id-\alpha(i-1)}(R/I(\mathcal{H}))=\gf n,i$.
\end{thm}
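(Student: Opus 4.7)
The plan is to combine Theorems \ref{betti} and \ref{u} with a short inclusion–exclusion computation that exploits the hypothesis $|E_i\cap E_j|=|\bigcap_{l=1}^n E_l|=\alpha$.

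First I would observe that for any two indices $i\neq j$, one has $\bigcap_{l=1}^n E_l \sse E_i\cap E_j$, and since both sets have cardinality $\alpha$, equality holds. Consequently, for every subset $S\sse[n]$ with $|S|\ge 2$,
\[
\bigcap_{l\in S} E_l \;=\; \bigcap_{l=1}^n E_l,
\]
so $|\bigcap_{l\in S} E_l|=\alpha$. This is the only structural fact I need from the hypotheses besides the free-vertex condition.

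Next, for any choice of indices $1\le l_1<\cdots<l_i\le n$, inclusion–exclusion gives
\[
\Bigl|\bigcup_{t=1}^i E_{l_t}\Bigr|
= \sum_{k=1}^{i}(-1)^{k+1}\gf{i},{k}\,\Bigl|\bigcap_{t\in T}E_t\Bigr|
= id + \alpha\sum_{k=2}^{i}(-1)^{k+1}\gf{i},{k},
\]
where $T$ ranges over $k$-subsets of $\{l_1,\ldots,l_i\}$ and we used the constant intersection value $\alpha$ for $k\ge2$. Using $\sum_{k=0}^{i}(-1)^k\gf{i},{k}=0$, the remaining sum collapses to $i-1$, so
\[
\Bigl|\bigcup_{t=1}^i E_{l_t}\Bigr| \;=\; id-\alpha(i-1),
\]
independent of the chosen $i$-subset.

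Finally, since each edge of $\mc H$ has a free vertex, Theorem \ref{u} applies: $\beta_{i,j}(R/I(\mc H))$ equals the number of $i$-subsets $\{l_1,\dots,l_i\}$ of edges whose union has size $j$. By the preceding calculation every $i$-subset contributes to the single degree $j=id-\alpha(i-1)$, so $\beta_{i,j}=0$ for all other $j$, while
\[
\beta_{i,id-\alpha(i-1)}(R/I(\mc H)) \;=\; \gf{n},{i},
\]
which is consistent with the total $\beta_i=\gf{n},{i}$ predicted by Theorem \ref{betti}. There is no serious obstacle here; the only point requiring care is the deduction that \emph{every} multi-fold intersection equals the global intersection (and therefore has size exactly $\alpha$), which is what makes inclusion–exclusion collapse to a closed form independent of the $i$-subset chosen.
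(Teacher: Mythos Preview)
Your proof is correct and follows the same route as the paper: apply Theorem~\ref{u} and verify that every $i$-subset of edges has union of size $id-\alpha(i-1)$. The paper simply asserts this cardinality, whereas you justify it via inclusion--exclusion (a direct alternative: set $C=\bigcap_l E_l$ and note the sets $E_l\setminus C$ are pairwise disjoint of size $d-\alpha$); your intermediate phrase ``collapses to $i-1$'' has a harmless sign slip, since $\sum_{k=2}^{i}(-1)^{k+1}\gf i,k=-(i-1)$, but the conclusion is right.
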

\begin{proof}
{For any $i$ and any edges $E_{l_1},\ldots,E_{l_i}$, we have
$|\bigcup_{t=1}^i E_{l_t}|=id-\alpha(i-1)$ and the number of
elements of the form $\bigcup_{t=1}^i E_{l_t}$ is $\gf n,i$.
Therefore using Theorem \ref{u}, we have
$\beta_{i,j}(R/I(\mathcal{H}))\neq 0$ if and only if
$j=id-\alpha(i-1)$ and
$\beta_{i,id-\alpha(i-1)}(R/I(\mathcal{H}))=\gf n,i$.}
\end{proof}

\section{Chordal hypergraphs}\label{chordal}

Chordal graphs have been condsidered more or less extensively for some time now. 
A core results in this area is the following theorem by R.~Fr\"oberg, \cite{F1}:

\begin{thm}
A graph $\mc{G}$ is chordal if and only if $R/I(\mc{G}^c)$ has linear resolution.
\end{thm}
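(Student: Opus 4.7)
The plan is to apply Hochster's formula $\beta_{i,j}(R/I_\Delta) = \sum_{W \sse [n],\,|W|=j} \dim_k \tilde H_{j-i-1}(\Delta_W;k)$ to the clique complex $\Delta = \Delta(\mc G)$. First I would observe that $I(\mc G^c)$, the Stanley--Reisner ideal of the independence complex $\Delta_{\mc{G}^c}$, coincides with $I_{\Delta(\mc G)}$: the independent sets of $\mc G^c$ are precisely the cliques of $\mc G$, so $\Delta_{\mc G^c} = \Delta(\mc G)$, a flag complex whose minimal nonfaces are the pairs $\{i,j\}$ that fail to be edges of $\mc G$, i.e.\ the edges of $\mc G^c$. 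Since $I(\mc G^c)$ is generated in degree $2$, linearity of the resolution of $R/I(\mc G^c)$ amounts to $\beta_{i,j}(R/I(\mc G^c)) = 0$ whenever $i\ge 1$ and $j\ne i+1$. Via Hochster this is equivalent to the combinatorial statement
\[
(\ast)\qquad \tilde H_l\bigl(\Delta(\mc G_W);k\bigr)=0\ \ \text{for every } W\sse[n]\text{ and every }l\ge 1,
\]
where I have used the easy identification $\Delta(\mc G)_W = \Delta(\mc G_W)$.

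For the ``only if'' direction I would argue by contrapositive. If $\mc G$ is not chordal it contains an induced cycle $C_k$ of length $k\ge 4$; taking $W = \mc X(C_k)$, the induced subgraph $\mc G_W$ equals $C_k$ and is triangle-free, so $\Delta(\mc G_W)$ is the $1$-dimensional $k$-gon itself, homotopy equivalent to $S^1$. Hence $\tilde H_1(\Delta(\mc G_W);k)\ne 0$, violating $(\ast)$, and Hochster's formula converts this into a nonvanishing Betti number $\beta_{k-2,\,k}(R/I(\mc G^c))$ sitting off the linear strand.

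For the ``if'' direction, since every induced subgraph of a chordal graph is again chordal, it suffices to prove $\tilde H_l(\Delta(\mc G);k)=0$ for $l\ge 1$ when $\mc G$ is chordal and then apply the statement to each induced subgraph $\mc G_W$ to obtain $(\ast)$. I would induct on $n = |\mc X(\mc G)|$, invoking the classical fact that every chordal graph has a \emph{simplicial vertex} $v$, i.e.\ one whose neighborhood $N(v)$ is a clique. Decompose $\Delta(\mc G) = A\cup B$ with $A = \Delta(\mc G\ssm v)$ and $B$ the full simplex on $N[v] = N(v)\cup\{v\}$; since $N(v)$ is a clique, $B$ is a simplex and $A\cap B$ is the simplex on $N(v)$, so both $B$ and $A\cap B$ are contractible. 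The reduced Mayer--Vietoris sequence then forces $\tilde H_l(\Delta(\mc G);k)\cong \tilde H_l(A;k)$ for $l\ge 1$, which vanishes by the inductive hypothesis applied to the chordal graph $\mc G\ssm v$ on $n-1$ vertices.

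The substantive work lies in the forward (``if'') direction; the essential combinatorial input is the existence of a simplicial vertex, and once that is granted the Mayer--Vietoris reduction is essentially forced. The converse is immediate after the Hochster reformulation, since induced cycles of length $\ge 4$ serve as explicit certificates of nonlinearity.
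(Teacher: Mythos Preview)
The paper does not give its own proof of this theorem; it is merely quoted from Fr\"oberg \cite{F1} as a known foundational result at the opening of Section~\ref{chordal}. Your argument is correct and is essentially the classical one: identify $I(\mc G^c)$ with the Stanley--Reisner ideal of the flag complex $\Delta(\mc G)$, reformulate linearity via Hochster's formula as the vanishing of $\tilde H_l(\Delta(\mc G_W);k)$ for every $W\sse[n]$ and every $l\ge 1$, exhibit an induced cycle of length $\ge 4$ to witness failure when $\mc G$ is not chordal, and use a simplicial vertex together with Mayer--Vietoris for the chordal direction. One small expository slip: your ``if'' and ``only if'' labels are interchanged relative to the usual reading of the biconditional (the ``only if'' part of ``$\mc G$ is chordal iff $R/I(\mc G^c)$ has linear resolution'' is the implication chordal $\Rightarrow$ linear), but this does not affect the mathematics.
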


From this result Fr\"oberg easily concluded that the complexes $\Delta(\mc{G})$, $\mc{G}$ 
chordal, that are also Cohen--Macaulay, are the ones in which $\mc{G}$ is a generalized 
$d$-tree. See \cite{F1} for details. More nice results associated to chordal graphs are 
given in \cite{He3} and \cite{VT4}. In \cite{E2} a notion of chordal hypergraph is given. 
We will use this to generalize upon some previously known results on chordal graphs.\\

The following definition is from \cite{E2}:

\begin{defn}
A {\bf chordal hypergraph} is a $d$-uniform hypergraph, obtained
inductively as follows:
\begin{itemize}
\item{$K_n^d$ is a chordal hypergraph, $n,d\in\mathbb{N}$.}
\item{If $\mc{G}$ is chordal, then so is
$\mc{H}$=$\mc{G}\bigcup_{K_j^d} K_i^d$, for $0\le j<i$. (We attach $K_i^d$ to $\mc{G}$ 
in a common (under identification) $K_j^d.$)}
\end{itemize}
\end{defn}

In connection to this definition we mention two facts: First, it is easy to see that 
every line hypergraph $L_n^{d,\alpha}$ is a chordal hypergraph. Indeed, it may be 
written as
\[
L_n^{d,\alpha}=K_d^d\cup_{K_\alpha^d}K_d^d\cup_{K_\alpha^d}K_d^d\cup_{K_\alpha^d}\cdots
\cup_{K_\alpha^d}K _d^d\cup_{K_\alpha^d}K_d^d.
\]
Also, as in chordal graphs, a chordal hypergraph does not contain any induced 
hypercycle $C_n^{d,\alpha}$. This is because an induced hypergraph of a chordal hypergraph 
is again chordal (see \cite{E2}). However, a hypercycle is not chordal.

In the following theorem we will use the well known fact that the Alexander dual notion of 
shellability, is the concept of linear quotients. The also follows from Theorem 
\ref{d-quot/d-shell}.

\begin{thm}\label{hypergraph}
If $\mc{H}$ is chordal hypergraph, then $I_{\Delta(\mc{H})}$ has linear quotients.
\end{thm}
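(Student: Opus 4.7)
The plan is to proceed by induction on the number of attaching steps in the chordal construction of $\mc{H}$. Since $\mc{H}$ is $d$-uniform, the minimal generators of $I_{\Delta(\mc{H})}$ are exactly the squarefree monomials $x^F$ indexed by $F\in\binom{[n]}{d}\setminus\mc{E}(\mc{H})$. The base case $\mc{H}=K_n^d$ is immediate: either $n\geq d$ and every $d$-subset is an edge, or $n<d$ and no minimal nonface of size $d$ exists, so $I_{\Delta(\mc{H})}=0$ in either situation and linear quotients holds trivially.

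For the inductive step, write $\mc{H}=\mc{G}\cup_{K_j^d}K_i^d$ and set $A=\mc{X}(\mc{G})$, $B=\mc{X}(K_i^d)$, $C=A\cap B$, and $V=B\setminus A$. Because any $d$-subset of $C$ is already an edge of $\mc{G}$ (as $C$ supports a $K_j^d$ in $\mc{G}$), the minimal generators of $I_{\Delta(\mc{H})}$ split cleanly into the old generators $g_1,\ldots,g_m$ of $I_{\Delta(\mc{G})}$ (the $d$-subsets of $A$ not in $\mc{E}(\mc{G})$) and the new generators $x^F$ with $|F|=d$, $F\cap V\ne\emptyset$, and $F\cap(A\setminus C)\ne\emptyset$. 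I order all new generators first, under lex with respect to a total order on the vertices that places $A\setminus C$ before $C$ before $V$, and then place the old generators last in the inductive linear-quotients order.

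Three colon-ideal checks then remain. An old $g_k$ against an earlier old $g_l$ is handled by the inductive hypothesis on $I_{\Delta(\mc{G})}$. An old $g_k$ against a new predecessor $x^F$ uses the fact that $\mr{supp}(g_k)\not\sse C$: there is some $a^{\ast}\in\mr{supp}(g_k)\cap(A\setminus C)$, and for any $v\in F\cap V$ and any $u\in\mr{supp}(g_k)\setminus\{a^\ast\}$, the set $(\mr{supp}(g_k)\setminus\{u\})\cup\{v\}$ is again a new generator (it still contains $a^\ast$ and picks up $v$), hence appears earlier in the order and provides the variable witness $x_v$ in the colon ideal.

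The main obstacle is the remaining case, of two new generators $F'<F$, which amounts to showing that $I_{\Delta(\mc{H}_0)}$ has linear quotients in the chosen lex order, where $\mc{H}_0=K_{|A|}^d\cup_{K_j^d}K_i^d$ is the single-attachment hypergraph. The natural witness is $F''=(F\setminus\{u\})\cup\{w\}$ with $w=\min(F'\setminus F)$ and $u=\min(F\setminus F')$; by construction $F''$ is lex-smaller than $F$, and a short case analysis across the three blocks $A\setminus C$, $C$, $V$ confirms that $F''$ is also a new generator. Whenever the naive swap threatens to escape the new-generator class — say $|F\cap V|=1$ with $u\in V$, or $|F\cap(A\setminus C)|=1$ with $u\in A\setminus C$ — the assumption that $F'$ is itself a new generator combined with the minimality of $w$ forces $w$ to lie in the same block as $u$, so $F''$ still meets both $V$ and $A\setminus C$. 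I expect this block-by-block verification to constitute the bulk of the argument.
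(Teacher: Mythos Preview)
Your argument is correct. The block-by-block verification you defer does go through: in the risky case $|F\cap V|=1$ with $u\in V$, the condition $u=\min(F\setminus F')$ forces $F\setminus\{u\}\subseteq F'$ (all of $F\setminus\{u\}$ lies in $A$, hence is smaller than $u$), so $F'=(F\setminus\{u\})\cup\{x\}$ for a single $x$, and $F'$ being new forces $x\in V$; thus $w=x\in V$. In the other risky case $|F\cap(A\setminus C)|=1$ with $u\in A\setminus C$, you get $u=\min F$, and $w<u$ in the block order forces $w\in A\setminus C$. All remaining cases are immediate since removing $u$ from $F$ leaves both $F\cap V$ and $F\cap(A\setminus C)$ intact.

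Your proof follows the same inductive skeleton as the paper's --- split the non-edges of $\mc{H}=\mc{G}\cup_{K_j^d}K_i^d$ into ``old'' ones contained in $\mc{X}(\mc{G})$ and ``new'' ones meeting $V$, place new before old, and invoke induction for the old block --- but the execution differs. The paper passes to the Alexander dual and proves shellability there, stratifying the new facets into blocks $A_{i,t}$ according to the intersection profile $(|E\cap S_1|,|E\cap S_2|,|E\cap S_3|)$ and building a product shelling inside each block. You instead stay on the primal side and use a single lexicographic order with the block convention $A\setminus C<C<V$. This is more economical: the lex swap $F''=(F\setminus\{u\})\cup\{w\}$ handles all new--new comparisons uniformly, avoiding the paper's layered case analysis over the $A_{i,t}$. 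The trade-off is that the paper's ordering makes the ``new generator'' property of the witness facet visible by construction (the block indices tell you which $A_{i',t'}$ you land in), whereas you must check it by the short argument above.
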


\begin{proof}
Assume that $\mc{H}$ is chordal, then inductively according to the
definition of chordal hypergraph we show that $\Delta^*$ is pure shellable. 
We have $\Delta^*=\langle \mc{X(H)}\setminus E, |E|=d, E\notin \mc{E(H)}\rangle$.
If $\mc{H}=K_n ^d$, then $\Delta^*=\emptyset$ and the result holds.
Assume that $\mc{H}=\mc{L}\bigcup_{K_j ^d} K_i ^d$, where $\mc{L}$ is chordal. By
induction hypothesis we have $\Delta_L ^*=\langle \mc{X(L)}\setminus E,
|E|=d, E\notin \mc{E(L)}\rangle$ is pure shellable with ordering
$H_1<\cdots<H_n$. Let $E$ be a subset of $\mc{X(H)}$ such that $|E|=d$
and $E\notin \mc{E(H)}$. Let $S_1=\mc{X(L)}\setminus \mc{X}(K_j ^d)$ and 
$S_2=\mc{X}(K_i^d)\setminus \mc{X}(K_j ^d)$, then $|E\cap S_1|\geq 1$. If $|E\cap
S_2|=0$, then $E\subseteq \mc{X(L)}$ and $\mc{X(L)}\setminus E\in \Delta_{\mc{L}}
^*$. Let $S_1=\{x_1,\ldots,x_r\}$, $S_2=\{y_1,\ldots,y_s\}$ and
$S_3=\mc{X}(K_j ^d)=\{z_1,\ldots,z_j\}$. For any set $S$ and integer $i$,
set $\Delta_i=\Delta_S ^{[i-1]}$, where $\Delta_S$ is the full
simplex on $S$. Then we have $\Delta_i$ is shellable. Let $i\geq 1$
and $1\leq t\leq i$ be integers. Consider the simplicial complex
$A_{i,t}=\langle E^c;\ E\subseteq \mc{X(H)}, |E|=d$, $|E\cap S_2|=d-i$,
$|E\cap S_1|=t$ and $|E\cap S_3|=i-t\rangle$. Each element of
$A_{i,t}$ has the form $\{y_1,\ldots,y_{d-i}\}^c\cap
\{x_1,\ldots,x_t\}^c\cap \{z_1,\ldots,z_{i-t}\}^c$ for some
$x_i,y_j,z_k$. Consider the ordering on the elements
of $A_{i,t}$ as follows:\\
Let $A_1,\ldots, A_m$ be a shelling for the simplicial complex
$\langle F^c,\ |F|=d-i,\ F\subseteq \{y_1,\ldots,y_s\}\rangle$,
$B_1,\ldots, B_n$ be a shelling for the simplicial complex $\langle
F^c,\ |F|=t,\ F\subseteq \{x_1,\ldots,x_r\}\rangle$ and $C_1,\ldots,
C_k$ be a shelling for the simplicial complex $\langle F^c,\
|F|=i-t,\ F\subseteq \{z_1,\ldots,z_j\}\rangle$. Consider the ordering\\
$A_1\cap B_1\cap C_1< A_2\cap B_1\cap C_1<\cdots<A_m\cap B_1\cap
C_1<A_1\cap B_2\cap C_1<A_2\cap B_2\cap C_1<\cdots<A_m\cap B_2\cap
C_1<\cdots<A_1\cap B_n\cap C_1<A_2\cap B_n\cap C_1<\cdots<A_m\cap
B_n\cap C_1<\cdots<A_1\cap B_1\cap C_k<A_2\cap B_1\cap
C_k<\cdots<A_m\cap B_1\cap C_k<\cdots<A_1\cap B_n\cap C_k<A_2\cap
B_n\cap C_k<\cdots<A_m\cap B_n\cap C_k$ for the facets of $A_{i,t}$.
Let $A_i\cap B_j\cap C_k<A_{i'}\cap B_{j'}\cap C_{k'}$, then $k\leq
k'$. If $k<k'$, then there exists $v\in C_{k'}\setminus C_k $ and
$l<k'$ such that $C_{k'}\setminus C_l=\{v\}$. So $v=z_n$ for some
$n$ and $v\in A_{i'}\cap B_{j'}\cap C_{k'}\setminus A_i\cap B_j\cap
C_k$ and $A_{i'}\cap B_{j'}\cap C_{k'}\setminus A_{i'}\cap
B_{j'}\cap C_l=\{v\}$. Since $l<k'$, we have $A_{i'}\cap B_{j'}\cap
C_l<A_{i'}\cap B_{j'}\cap C_{k'}$. Let $k=k'$. Then $j\leq j'$. If
$j<j'$, by the same way there exists $v\in A_{i'}\cap B_{j'}\cap
C_k\setminus A_i\cap B_j\cap C_k$ and $l<j'$ such that $A_{i'}\cap
B_{j'}\cap C_k\setminus A_{i'}\cap B_l\cap C_k=\{v\}$. Therefore let
$k=k'$ and $j=j'$. Then $i<i'$. So there exists $v\in
A_{i'}\setminus A_i$ and $l<i'$ such that $A_{i'}\setminus
A_l=\{v\}$. Therefore $v=y_n$ for some $n$ and $v\in A_{i'}\cap
B_j\cap C_k\setminus A_i\cap B_j\cap C_k$ and $A_{i'}\cap B_j\cap
C_k\setminus A_l\cap B_j\cap C_k=\{v\}$ and $A_l\cap B_j\cap
C_k<A_{i'}\cap B_j\cap C_k$. So the above ordering is a shelling for
$A_{i,t}$. Now consider an ordering
for $\Delta^*$ as follows:\\
For $F\in A_{i,t}$ and $G\in A_{j,s}$, set $F<G$ if $i<j$ or $i=j$
and $t>s$. Also for any $G=H_j\cup S_2$ and $F\in A_{i,t}$, set $F<G$ 
and $H_1\cup S_2<\cdots<H_n\cup S_2$. 

Let $F$ and $G$ be two facets of $\Delta^*$ such that $F<G$ and let
$F$ be a facet of $A_{i,t}$ and $G$ be a facet of $A_{j,s}$. Then
$i\leq j$. The case $(i,t)=(j,s)$ is discussed above. Assume that
$(i,t)\neq (j,s)$ and $i=j$. Then $t>s$.

Let $F=\{y_{k_1},\ldots,y_{k_{d-i}}\}^c\cap
\{x_{k'_1},\ldots,x_{k'_t}\}^c\cap
\{z_{k''_1},\ldots,z_{k''_{i-t}}\}^c$ and
$G=\{y_{l_1},\ldots,\\y_{l_{d-i}}\}^c\cap
\{x_{l'_1},\ldots,x_{l'_s}\}^c\cap
\{z_{l''_1},\ldots,z_{l''_{i-s}}\}^c$. Since $t>s$, $i-t<i-s$. Also
there exists $x_{k'_\lambda}\notin \{x_{l'_1},\ldots,x_{l'_s}\}$ for
some $1\leq \lambda\leq t$. So $x_{k'_\lambda}\in G\setminus F$ and
$G\setminus (\{y_{l_1},\ldots,y_{l_{d-i}}\}^c\cap
\{x_{l'_1},\ldots,x_{l'_s},x_{k'_\lambda}\}^c\cap
\{z_{l''_1},\ldots,z_{l''_{i-s-1}}\}^c)=
\{y_{l_1},\ldots,y_{l_{d-i}},x_{l'_1},
\ldots,x_{l'_s},x_{k'_\lambda},z_{l''_1}, \ldots,\\z_{l''_{i-s-1}}\}
\setminus
\{y_{l_1},\ldots,y_{l_{d-i}},x_{l'_1},\ldots,x_{l'_s},z_{l''_1},\ldots,
z_{l''_{i-s}}\}=\{x_{k'_\lambda}\}$.
Also $\{y_{l_1},\ldots,y_{l_{d-i}}\}^c\cap
\{x_{l'_1},\ldots,x_{l'_s},x_{k'_\lambda}\}^c\cap
\{z_{l''_1},\ldots,z_{l''_{i-s-1}}\}^c<G$.

Now let $i<j$ and
$G=\{y_{l_1},\ldots,y_{l_{d-j}}\}^c\cap
\{x_{l'_1},\ldots,x_{l'_s}\}^c\cap
\{z_{l''_1},\ldots,z_{l''_{j-s}}\}^c$. Then there exists
$y_{k_{\lambda}}\in G\setminus F$. Since $j\geq 2$, we have $s\geq
2$ or $j-s\geq 1$. If $s\geq 2$, then $G\setminus
\{y_{l_1},\ldots,y_{l_{d-j}},y_{k_{\lambda}}\}^c\cap
\{x_{l'_1},\ldots,x_{l'_{s-1}}\}^c\cap
\{z_{l''_1},\ldots,z_{l''_{j-s}}\}^c=\{y_{k_{\lambda}}\}$. Since
$\{y_{l_1},\ldots,y_{l_{d-j}},y_{k_{\lambda}}\}^c\cap
\{x_{l'_1},\ldots,x_{l'_{s-1}}\}^c\cap
\{z_{l''_1},\ldots,z_{l''_{j-s}}\}^c\in A_{j-1,s-1}$, then in the
ordering it appears before $G$. If $j-s\geq 1$, then $G\setminus
\{y_{l_1},\ldots,y_{l_{d-j}},y_{k_{\lambda}}\}^c\cap
\{x_{l'_1},\ldots,x_{l'_s}\}^c\cap
\{z_{l''_1},\ldots,z_{l''_{j-s-1}}\}^c=\{y_{k_{\lambda}}\}$ and
$\{y_{l_1},\ldots,y_{l_{d-j}},y_{k_{\lambda}}\}^c\cap
\{x_{l'_1},\ldots,x_{l'_s}\}^c\\\cap
\{z_{l''_1},\ldots,z_{l''_{j-s-1}}\}^c\in A_{j-1,s}$.

Now let $F<G$, where $F=\{y_{k_1},\ldots,y_{k_{d-i}}\}^c\cap
\{x_{k'_1},\ldots,x_{k'_t}\}^c\cap
\{z_{k''_1},\ldots,z_{k''_{i-t}}\}^c$ and $G=H_j\cup
S_2=\{x_{l_1},\ldots,x_{l_{\lambda}},z_{l'_1},\ldots,z_{l'_{d-\lambda}}\}^c$.
Then there exists $y_m\in G\setminus F$. If $\lambda>1$, then
$G\setminus
\{y_m,x_{l_1},\ldots,x_{l_{\lambda-1}},z_{l'_1},\ldots,z_{l'_{d-\lambda}}\}^c=\{y_m\}$.
Otherwise $\lambda=1$ and $d-\lambda>1$. Then $G\setminus
\{y_m,x_{l_1},\ldots,x_{l_{\lambda}},z_{l'_1},\ldots,z_{l'_{d-\lambda-1}}\}^c=\{y_m\}$.
\end{proof}

\begin{cor}\label{graph}
The graph $\mc{G}$ is chordal if and only if $I_{\Delta(\mc{G})}$ has linear
quotients.
\end{cor}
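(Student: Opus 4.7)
The plan is to combine Theorem~\ref{hypergraph} (in the $d=2$ case) with Fr\"oberg's theorem cited at the start of this section. The key preliminary observation is that for a graph $\mc{G}$, a minimal non-face of the clique complex $\Delta(\mc{G})$ is exactly a pair $\{i,j\}\sse [n]$ with $\{i,j\}\notin\mc{E}(\mc{G})$, i.e.\ an edge of the complement $\mc{G}^c$. Thus $I_{\Delta(\mc{G})}=I(\mc{G}^c)$ as squarefree monomial ideals generated in degree $2$, which is what makes the two theorems talk to each other.

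For the implication ``$\mc{G}$ chordal $\Rightarrow$ $I_{\Delta(\mc{G})}$ has linear quotients,'' I would realize any classically chordal graph as a chordal hypergraph in the paper's inductive sense for $d=2$. By Dirac's theorem a chordal graph has a simplicial vertex $v$ whose neighborhood is a clique $K_j^2$; hence $\mc{G}=(\mc{G}\ssm v)\cup_{K_j^2}K_{j+1}^2$, and induction on the number of vertices produces the chordal-hypergraph presentation in the sense of this section. Theorem~\ref{hypergraph} then yields linear quotients of $I_{\Delta(\mc{G})}$.

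For the converse, I would invoke the standard implication ``linear quotients $\Rightarrow$ linear resolution'' for a squarefree monomial ideal generated in a single degree; applied to $I_{\Delta(\mc{G})}=I(\mc{G}^c)$, which is generated in degree $2$, this gives that $R/I(\mc{G}^c)$ has a linear resolution, and Fr\"oberg's theorem forces $\mc{G}$ to be chordal.

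The only step demanding genuine care is the equivalence of the classical definition of a chordal graph (no induced cycle of length $\ge 4$) with the paper's inductive definition restricted to $d=2$; this is a well-known reformulation via simplicial vertex elimination (Dirac) and hence not a real obstacle. Everything else is a direct quotation of results already available in the excerpt.
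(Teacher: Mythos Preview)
Your proposal is correct and follows essentially the same route as the paper: forward direction via Theorem~\ref{hypergraph}, converse via ``linear quotients $\Rightarrow$ linear resolution'' combined with Fr\"oberg's theorem. The paper's proof is terser and leaves implicit the identification of classically chordal graphs with the $d=2$ case of the inductive definition; you make this explicit via Dirac's simplicial-vertex characterization, which is a reasonable (and arguably more careful) way to fill in what the paper takes for granted.
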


\begin{proof}
The fact that the edge ideal of a chordal graph has linear quotients, follows from the theorem. 
Assume $R/I(\Delta(\mc{G}))$ has linear quotients. Then it has linear resolution and thus, 
$\mc{G}$ is chordal.
\end{proof}

In \cite{F1} Fr\"oberg considers a class of chordal graphs called $n$-trees.

\begin{defn}
A $n$-tree is a chordal graph defined inductively as follows:
\begin{itemize}
\item{$K_{n+1}$ is a $n$-tree.}
\item{If $\mc{G}$ is a $n$-tree, then so is
$\mc{H}$=$\mc{G}\bigcup_{K_{n}} K_{n+1}$. (We attach $K_{n+1}$ to $\mc{G}$ 
in a common (under identification) $K_{n}$)}
\end{itemize}
\end{defn}

Now, consider the corresponding subclass $\mc{T}_d$ of the class of chordal hypergraphs. That is, 
$\mc{T}_d$ is the class of chordal hypergraphs described as follows:
\begin{itemize}
\item{$K_{n+1}^d$ belongs to $\mc{T}_d$.}
\item{If $\mc{G}$ belongs to $\mc{T}_d$, then so does
$\mc{H}$=$\mc{G}\bigcup_{K_{n}^d} K_{n+1}^d$. (We attach $K_{n+1}^d$ to $\mc{G}$ 
in a common (under identification) $K_n^d.$)}
\end{itemize}

We get the following results:

\begin{thm}
For any hypergraph $\mc{H}$ in $\mc{T}_d$, the clique complex $\Delta(\mc{H})$ is pure 
shellable and hence Cohen--Macaulay.
\end{thm}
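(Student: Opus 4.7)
The plan is to proceed by induction on the number of copies of $K_{n+1}^d$ used to build $\mc{H}$. The base case $\mc{H}=K_{n+1}^d$ is immediate, since $\Delta(K_{n+1}^d)$ is the full $n$-simplex on $n+1$ vertices, which is trivially pure shellable (and hence Cohen--Macaulay) of dimension $n$.

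For the inductive step, write $\mc{H}=\mc{G}\bigcup_{K_n^d}K_{n+1}^d$ with $\mc{G}\in\mc{T}_d$, and let $v$ denote the unique new vertex in $\mc{X}(K_{n+1}^d)\ssm\mc{X}(K_n^d)$. By the induction hypothesis, $\Delta(\mc{G})$ has a shelling $F_1,F_2,\ldots,F_m$ where (by the same induction that identifies the facets) each $F_i$ is the vertex set of some $K_{n+1}^d$ inside $\mc{G}$, so $|F_i|=n+1$. I would set $F_{m+1}=\mc{X}(K_n^d)\cup\{v\}$, the vertex set of the newly attached $K_{n+1}^d$, and claim that $F_1,\ldots,F_m,F_{m+1}$ is a shelling of $\Delta(\mc{H})$. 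That each old $F_j$ remains a facet is easy: adjoining $v$ to $F_j$ would force every $(d-1)$-subset of $F_j$ to lie in $\mc{X}(K_n^d)$ (because those are the only $(d-1)$-sets $S$ for which $S\cup\{v\}\in\mc{E}(\mc{H})$), hence $F_j\sse\mc{X}(K_n^d)$, contradicting $|F_j|=n+1>n$.

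To verify the shelling condition on the new facet $F_{m+1}$, I would first establish the auxiliary lemma, proved by a parallel induction on the construction: \emph{every $K_n^d$ sub-hypergraph of a member of $\mc{T}_d$ is contained in some $K_{n+1}^d$ sub-hypergraph}. Applied to the $K_n^d$ along which the last attachment is made, the lemma supplies a facet $F_\ell$ of $\Delta(\mc{G})$ with $F_\ell\supseteq\mc{X}(K_n^d)$; then $F_{m+1}\ssm F_\ell=\{v\}$, and for every $j\leq m$ we have $F_{m+1}\cap F_j\sse\mc{X}(K_n^d)\sse F_\ell$, which is exactly the standard shelling condition.

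The main obstacle, and where the rigid structure of $\mc{T}_d$ must really be exploited, is establishing purity: I must show that no facet of $\Delta(\mc{H})$ other than $F_1,\ldots,F_{m+1}$ exists, and in particular that no small subset of $\mc{X}(\mc{H})$ straddling two distinct $K_{n+1}^d$ blocks accidentally becomes maximal in the clique complex. The argument is an extension statement: every face of $\Delta(\mc{H})$ must be enlargeable to one of the $F_i$, and it is precisely the condition that each attachment shares an entire $K_n^d$ (rather than a smaller sub-hypergraph) that prevents low-dimensional facets from appearing.
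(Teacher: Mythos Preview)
Your inductive scheme---start from the simplex $\Delta(K_{n+1}^d)$, append $F_{m+1}=\mc{X}(K_{n+1}^d)$ at the end of a shelling of $\Delta(\mc G)$, and verify the shelling condition via a facet $F_\ell\supseteq\mc X(K_n^d)$---is exactly the paper's argument. Your auxiliary lemma is more than is needed: since $\mc X(K_n^d)$ is already a face of $\Delta(\mc G)$ (every $d$-subset of it is an edge of $\mc G$), the inductive hypothesis of purity places it inside some facet $F_\ell$ of size $n+1$; that is the one-line justification the paper uses for ``$L\subseteq F_i$ for some $i$''.

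You are right that purity is the real issue, and here both your closing sketch and the paper's bare assertion $\Delta(\mc H)=\langle F_1,\ldots,F_{r+1}\rangle$ run into trouble once $d\ge 3$. Take $d=n=3$: let $\mc G=K_4^3$ on $\{a,b,c,d\}$ and attach a second $K_4^3$ on $\{a,b,c,e\}$ along the $K_3^3$ on $\{a,b,c\}$, so that $\mc E(\mc H)=\{abc,abd,acd,bcd,abe,ace,bce\}$. Now $\{d,e\}$ is a face of $\Delta(\mc H)$ (every set of size $<d$ is), yet none of $ade,bde,cde$ is an edge, so $\{d,e\}$ is a facet of dimension $1$ sitting beside the two intended facets $\{a,b,c,d\}$ and $\{a,b,c,e\}$ of dimension $3$. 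Thus $\Delta(\mc H)$ is not pure, hence not Cohen--Macaulay, and your proposed extension argument---that sharing a full $K_n^d$ prevents small straddling facets---cannot be made to work as stated. The same obstruction appears for every $n\ge d\ge 3$: the two new vertices introduced in consecutive attachments always span such a stranded $1$-face. So the gap you flagged is genuine and, for $d\ge3$, not fillable without changing the statement.
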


\begin{proof}
The proof is by induction. If $\mc{H}=K_{n+1}^d$, then $\Delta(\mc{H})$ is a simplex and 
pure shellable. Let $\mc{H}=\mc{G}\cup_{K_{n}^d}K_{n+1}^d$ and $F_1<\cdots<F_r$ be a shelling 
for $\Delta(\mc{G})$. Then $\Delta(\mc{H})=\langle F_1,\ldots,F_r,F_{r+1}\rangle$,
where $F_{r+1}=\mc{X}(K_{n+1}^d)$. Let $\mc{X}(K_{n}^d)=L$. Then $L\subseteq
F_i$ for some $1\leq i\leq r$. We claim that
$F_1<\cdots<F_r<F_{r+1}$ is a shelling for $\Delta(\mc{H})$. Let
$F_{r+1}=L\cup\{v\}$. Then for any $j\leq r$, one has $v\in
F_{r+1}\setminus F_j$ and $F_{r+1}\setminus F_i=\{v\}$.
\end{proof}

\begin{cor}
For any $d$-tree $\mc{G}$, the clique complex $\Delta(\mc{G})$ is pure shellable and hence
Cohen--Macaulay.
\end{cor}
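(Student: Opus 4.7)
The plan is to deduce this directly from the preceding theorem, since the two statements differ essentially by naming conventions. My plan is to show that every $d$-tree in the sense of Fr\"oberg, viewed as a $2$-uniform hypergraph, belongs to the class $\mc{T}_2$, and then invoke the preceding theorem with uniformity $2$.

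First I would translate Fr\"oberg's inductive definition of a $d$-tree into the hypergraph language of the paper. The base case $K_{d+1}$ is literally the $2$-uniform complete hypergraph $K_{d+1}^2$, which lies in $\mc{T}_2$ by the defining base clause. The inductive step, in which a new copy of $K_{d+1}$ is glued to an existing $d$-tree along a common $K_d$, is exactly the operation $\mc{G}\cup_{K_d^2}K_{d+1}^2$ that appears in the inductive clause defining $\mc{T}_2$ (taking the parameter $n$ to equal $d$ at every stage). Consequently every graph produced by Fr\"oberg's recipe for a $d$-tree arises as an element of $\mc{T}_2$.

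With this inclusion in hand, the corollary follows immediately from the preceding theorem applied to the uniformity $2$: for every $d$-tree $\mc{G}$, the clique complex $\Delta(\mc{G})$ is pure shellable, and pure shellability of a simplicial complex is well known to imply Cohen--Macaulayness (Reisner's criterion, via the standard fact that shellable complexes are Cohen--Macaulay). Note that the clique complex of a graph $\mc{G}$ coincides with the clique complex of $\mc{G}$ regarded as a $2$-uniform hypergraph, so no reinterpretation of $\Delta(\mc{G})$ is required when passing between the two viewpoints.

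I do not foresee any genuine obstacle in carrying this out: all the substantive combinatorial work, in particular the construction of the shelling order for $\Delta(\mc{H})$ under the gluing $\mc{H}=\mc{G}\cup_{K_n^d}K_{n+1}^d$, has already been done in the proof of the preceding theorem. The present corollary amounts to recognizing the classical class of $d$-trees as a special case of $\mc{T}_2$.
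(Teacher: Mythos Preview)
Your proposal is correct and matches the paper's intent: the corollary is stated without proof immediately after the theorem, and the implicit argument is precisely the one you give, namely that a $d$-tree in Fr\"oberg's sense is by definition an element of $\mc{T}_2$ (with the parameter $n$ taken equal to $d$), so the theorem applies directly. Your observation that the clique complex of a graph agrees with the clique complex of the same object viewed as a $2$-uniform hypergraph is the only point that needs checking, and it holds by the paper's definition of $\Delta(\mc{H})$.
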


In the proof of the following proposition, we will use the fact that the Stanley--Reisner 
ideal of the complex $\Delta_{I(K_n^d)}^\ast$ is shellable. One may in fact use a 
lexicographic shelling, so, by symmetry, one may start the shelling with any facet of 
the complex.

\begin{prop}
Let $\mc{H}=K_m^d\cup_{K_j^d}K_i^d$, $m\ge d$ be given. Then $I(\mc{H})$ has linear 
quotients precisely when 

\begin{itemize}
\item[$(i)$]{$i,j<d$, or}
\item[$(ii)$]{$i\geq d$, and $j=m-1$ or $j=i-1$.}
\end{itemize}
\end{prop}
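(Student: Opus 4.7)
The plan is to split into sufficiency (producing explicit linear-quotient orderings in cases (i) and (ii)) and necessity (exhibiting, in every other case, two edges whose incompatibility obstructs linear quotients regardless of ordering).

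\emph{Sufficiency.} In case (i), $i<d$ means $K_i^d$ contributes no edges, so $I(\mc{H})=I(K_m^d)$ (in a polynomial ring with some extra unused variables), and linear quotients is immediate since $I(K_m^d)$ has a linear resolution. In case (ii) it suffices by symmetry to treat $j=m-1$; write $A=\{a\}=V_1\ssm V_2$, $B=V_1\cap V_2$ (with $|B|=m-1$), and $C=V_2\ssm V_1$, so that the minimal generators of $I(\mc{H})$ are the $d$-subsets of $V_2=B\cup C$ together with the $d$-subsets of $V_1$ containing $a$. I order them by first listing the $d$-subsets of $V_2$ in a linear-quotient ordering of $I(K_i^d)$ (available by the cited symmetry of the lex shelling of $\Delta_{I(K_i^d)}^\ast$) and then appending the remaining generators in any order. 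Within the first block, linear quotients is inherited from $I(K_i^d)$. For a second-block generator $x^E$ with $E=\{a\}\cup F$, $F\sse B$ of size $d-1$, each $v\in(B\cup C)\ssm F$ gives the earlier generator $x^{F\cup\{v\}}$, contributing $x_v$ to the colon ideal; a minimality check shows these linear forms exhaust the colon, since any would-be higher-degree minimal generator $x^\sigma$ would satisfy $\sigma\cap ((B\cup C)\ssm F)=\emptyset$, forcing $\sigma\sse E$ and contradicting minimality.

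\emph{Necessity.} Now suppose $i\ge d$, $j\le m-2$, and $j\le i-2$. I exhibit edges $E\sse V_1$ and $E'\sse V_2$ with $E\ssm E'\sse A$ and $E'\ssm E\sse C$, both of size at least $2$: if $j\ge d-2$, pick $a_1,a_2\in A$, $c_1,c_2\in C$, $b_1,\ldots,b_{d-2}\in B$ and set $E=\{a_1,a_2,b_1,\ldots,b_{d-2}\}$, $E'=\{c_1,c_2,b_1,\ldots,b_{d-2}\}$; if $j<d-2$ (so $j\le d-3$, and $|A|,|C|\ge d-j\ge 3$ since $m,i\ge d$), pick $A_0\sse A$, $C_0\sse C$ of size $d-j$ and set $E=B\cup A_0$, $E'=B\cup C_0$. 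In any proposed linear-quotient ordering, WLOG $E$ precedes $E'$; the linear-quotient condition at $E'$ forces some earlier edge $E''$ with $x^{E''}:x^{E'}$ equal to $x_v$ for some $v\in E\ssm E'\sse A$, i.e.\ $E''=(E'\ssm\{w\})\cup\{v\}$ for some $w\in E'$. But $v\in A$ rules out $E''\sse V_2=B\cup C$, and $|E'\cap C|\ge 2$ ensures $E'\ssm\{w\}$ still contains a vertex of $C$, ruling out $E''\sse V_1=A\cup B$; hence no such $E''$ exists. The case $E'$ preceding $E$ is handled symmetrically using $|E\cap A|\ge 2$ and $E'\ssm E\sse C$.

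The main obstacle lies in unifying the two sub-regimes of the necessity direction: the construction of $E, E'$ differs in the low-$j$ and high-$j$ cases, but in both the decisive feature is that $E\ssm E'\sse A$ and $E'\ssm E\sse C$ with both differences of size $\ge 2$, which is precisely what prevents any reducer edge $E''$ from existing, because every edge of $\mc{H}$ lies entirely in $V_1$ or in $V_2$ and cannot simultaneously contain vertices of $A$ and of $C$.
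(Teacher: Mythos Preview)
Your argument is correct and runs parallel to the paper's, which phrases everything through the Alexander dual: rather than producing a linear-quotient ordering directly, the paper shows the dual complex $\Delta_{I^\ast(\mc{H})}$ is shellable exactly in cases (i) and (ii). Your ordering in case (ii) and your obstructing pair $E,E'$ in the necessity direction correspond under duality to the paper's shelling order $G_1<\cdots<G_s<F_1<\cdots<F_t$ and its pair $F_1,F_2$.

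One slip to fix: in case (i) you write that linear quotients is ``immediate since $I(K_m^d)$ has a linear resolution.'' Linear resolution does not imply linear quotients in general; what you need (and what you correctly invoke a few lines later for $K_i^d$) is that $I(K_m^d)$ itself has linear quotients, e.g.\ in lex order, equivalently that its Alexander dual is shellable. The paper records exactly this as a known fact in the paragraph preceding the proposition.

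Your necessity argument is in fact slightly cleaner than the paper's. The paper treats $j\le d-2$ by a separate dimension count on facet intersections in the dual complex, whereas you handle both the high-$j$ and low-$j$ sub-regimes with the same ``no reducer $E''$ exists'' argument, the only difference being how $E,E'$ are built. The decisive feature in both constructions---that $E\ssm E'\sse A$ and $E'\ssm E\sse C$ with both differences of size at least $2$---is precisely the idea driving the paper's $j\ge d-2$ case.
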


\begin{proof}
Put $A=\mathcal{X}(\mathcal{H})\setminus \mathcal{X}(K_m^d)$ and
$B=\mathcal{X}(\mathcal{H})\setminus \mathcal{X}(K_i^d)$. We will
show that the Alexander dual complex of $\mathcal{H}$ is shellable
precisely in the cases mentioned above. Assume that
$F'_1<\cdots<F'_t$ is a shelling of $\Delta_{I^*(K_m^d)}$. Set
$F_i=F'_i\cup A$ for $i=1,\ldots,t$. In case $(i)$ the sequence
$F_1<\cdots<F_t$ is a shelling of $\Delta_{I^*(\mathcal{H})}$.

In case $(ii)$ we find a shelling when $i\geq d$ and $j=m-1$. 
The case $j=i-1$ is similar. Let $i\geq d$, $j=m-1$ and
$\mathcal{X}(K_m^d)\setminus \mathcal{X}(K_j^d)=\{v\}$. Let
$G'_1<\cdots<G'_s$ be a shelling of $\Delta_{I^*(K_i^d)}$. Set
$G_i=G'_i\cup B$ for $i=1,\ldots,s$. It is easy to see that the set
of facets of $\Delta_{I^*(\mathcal{H})}$ is $\{F_i\}_{i=1}^t\cup
\{G_j\}_{j=1}^s$. We claim that the ordering
$G_1<\cdots<G_s<F_1<\cdots<F_t$ is a shelling of
$\Delta_{I^*(\mathcal{H})}$. Let $G_i<F_j$, where
$G_i=\mathcal{X}(\mathcal{H})\setminus E_1$ and
$F_j=\mathcal{X}(\mathcal{H})\setminus E_2$ for some edges $E_1$ of
$K_i^d$ and $E_2$ of $K_m^d$. Let $E_1=\{w_1,\ldots,w_d\}$ and
$E_2=\{v,v_1,\ldots,v_{d-1}\}$, where $v_1,\ldots,v_{d-1}\in
\mathcal{X}(K_j^d)$. Then there exists $1\leq l\leq d$ such that
$w_l\in F_j\setminus G_i$. Set $E_3=\{w_l,v_1,\ldots,v_{d-1}\}$,
then $\mathcal{X}(\mathcal{H})\setminus E_3=G_k$ for some $k$ and
$F_j\setminus G_k=\{w_l\}$.

Now, assume $(i)$ and $(ii)$ does not hold. Then $i\geq d$, $m-j\geq 2$
and $i-j\geq 2$. We first claim that if $j\leq d-2$, there is no
shelling: Consider the intersection $G_j\cap F_i$ (same notation as above). 
The two facets here correspond to two edges in $\mc{H}$, one from $K_m^d$ and 
one from $K_i^d$. These two edges can at most have $j$ elements in common. 
Hence, by considering the set complements of these edges we realize that the 
two facets can at most have $|\mc{X(H)}|-d-2$ vertices in common. This shows 
that no ordering of the $F_i$'s and the $G_j$'s can be a shelling, since 
$\dim(F_i)=\dim(G_j)=\mc{X(H)}-d-1$ for every $i=1,\ldots,t$, $j=1,\ldots,s$.

So, we assume $j\geq d-2$. Let $v_1,v_2\in \mathcal{X}(K_m^d)\setminus
\mathcal{X}(K_j^d)$, $w_1,w_2\in \mathcal{X}(K_i^d)\setminus\mathcal{X}(K_j^d)$ and
$u_1,\ldots,u_{d-2}\in \mathcal{X}(K_j^d)$. To finish the proof by contradiction, we 
assume $\Delta_{I^*(\mathcal{H})}$ is shellable. Consider two edges 
$E_1=\{u_1,\ldots,u_{d-2},v_1,v_2\}$ and $E_2=\{u_1,\ldots,u_{d-2},w_1,w_2\}$. Then
$F_i=\mathcal{X}(\mathcal{H})\setminus E_i$ for $i=1,2$ are facets of $\Delta_{I^*(\mathcal{H})}$. 
Without loss of generality may assume $F_1<F_2$. Hence there exists vertex $v\in F_2\setminus F_1$ 
and facet $F_3$ such that $F_2\setminus F_3=\{v\}$. Let
$F_3=\mathcal{X}(\mathcal{H})\setminus E_3$ for some edge $E_3$.
Since $F_2\setminus F_1=\{v_1,v_2\}$, we have $v=v_1$ or $v=v_2$. Therefore 
$E_3\subseteq \mathcal{X}(K_m^d)$. Also $E_3\setminus\{v\}\subseteq E_2$. Thus $w_1\in E_3$ or 
$w_2\in E_3$, which is a contradiction.
\end{proof}

We end this section with a result on the diameter of the complement of a chordal graph. Recall that 
the diameter of a connected graph $\mc{G}$ is defined as 
\[
\mr{diam}(\mc{G})=\max\{\mr{dist}(u,v)\,;\,u,v\in\mc{X(G})\},
\]
where $\mr{dist}(u,v)$ is the number of edges in a shortest path between $u$ to $v$. If $\mc{G}$ is not 
connected we set the diameter to be $\infty$.

\begin{prop}
Let $\mc{G}$ be a connected chordal graph. Then the diameter of the complementary graph $\mc{G}^c$ is 
at most 3.
\end{prop}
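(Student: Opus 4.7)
The plan is a short proof by contradiction. I would assume the existence of two vertices $u,v$ with $\mr{dist}_{\mc{G}^c}(u,v)\ge 4$ and extract from this an induced $4$-cycle in $\mc{G}$, contradicting chordality. (Note that $\mr{diam}(\mc{G}^c)=\infty$ whenever $\mc{G}^c$ is disconnected, as happens for $\mc{G}=K_n$, so strictly speaking the argument bounds distances within each connected component of $\mc{G}^c$; I will work under the implicit assumption that $\mc{G}^c$ is connected.)

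First, I would fix a geodesic $u=x_0,x_1,x_2,x_3,x_4$ of length exactly $4$ in $\mc{G}^c$, obtained by truncating a shortest $u$-$v$ path. The basic observation, used throughout, is that on any geodesic two \emph{non-consecutive} vertices cannot be adjacent in $\mc{G}^c$, since a chord would shorten the path. Equivalently, every non-consecutive pair among $x_0,\ldots,x_4$ is an edge of $\mc{G}$, while every consecutive pair is a non-edge of $\mc{G}$.

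Next, I would single out the four vertices $x_0,x_1,x_3,x_4$ and inspect the six pairs among them: the pairs $(x_0,x_3),(x_0,x_4),(x_1,x_3),(x_1,x_4)$ are non-consecutive on the geodesic and hence edges of $\mc{G}$, while $(x_0,x_1)$ and $(x_3,x_4)$ are consecutive and hence non-edges of $\mc{G}$. This is exactly the statement that $x_0\,x_3\,x_1\,x_4\,x_0$ is an induced $4$-cycle in $\mc{G}$, contradicting the hypothesis that $\mc{G}$ is chordal. Hence no pair at $\mc{G}^c$-distance $\ge 4$ exists, and $\mr{diam}(\mc{G}^c)\le 3$.

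The only subtle point is picking the correct four-vertex subset of the geodesic: the middle vertex $x_2$ must be discarded, because the pairs $(x_0,x_2)$ and $(x_2,x_4)$ are both edges of $\mc{G}$ and would spoil the induced $C_4$. Once the quadruple $\{x_0,x_1,x_3,x_4\}$ is selected, the induced $C_4$ drops out of the geodesic property with no further computation.
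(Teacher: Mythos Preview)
Your proof is correct and follows essentially the same route as the paper: assume a geodesic of length $4$ in $\mc{G}^c$, pass to the complement, and exhibit an induced $C_4$ in $\mc{G}$. The paper phrases this as ``the complement of the induced $P_5$ contains a chordless $4$-cycle'' without naming the four vertices; your version is more explicit in discarding the middle vertex $x_2$ and writing down the cycle $x_0x_3x_1x_4$, but the underlying argument is identical. Your side remark that the statement tacitly assumes $\mc{G}^c$ is connected (since, e.g., $\mc{G}=K_n$ gives $\mr{diam}(\mc{G}^c)=\infty$) is a fair observation that the paper does not address.
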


\begin{proof}
If $\mr{diam}(\mc{G}^c)>3$ we find vertices $u$ and $v$ with $\mr{dist}(u,v)=4$. Then the induced graph of $\mc{G}^c$ 
on $\{u,v_1,v_2,v_3,v\}$ is the path $uv_1,v_1v_2,v_2v_3,v_3v$. The graph complement of $\mc{G}^c_{\{u,v_1,v_2,v_3,v\}}$ 
contains a 4-cycle without any chord. This contradiction gives our result. 
\end{proof}

\subsection{$d$-uniform hypergraphs and quasi forests}
It is known that a certain class of simplicial complexes, called quasi-trees (see below), and 
chordal graphs, in some sense contain the same information. 
\begin{rem}
Recall that a flag complex is a simplicial complex in which every
minimal non face consists of precisely 2 elements. As one easily
sees, such complex is determined by its 1-skeleton. 
\end{rem}

The following is the content of Lemma 3.1 in \cite{He3}.

\begin{lem}
Let $\Delta$ be a simplicial complex. Then $\Delta$ is a quasi-forest precisely when 
$\Delta=\Delta(\mc{G})$ for some chordal graph $\mc{G}$. In particular, a quasi-forest 
is a flag complex.
\end{lem}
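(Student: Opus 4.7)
The plan is to identify the facets of $\Delta(\mc{G})$ with the maximal cliques of $\mc{G}$, and then to match the leaf ordering of a quasi-forest against the clique tree (running intersection) characterization of chordal graphs. In both implications I will use the standard fact that every induced subcomplex $\Delta_W$ of a quasi-forest is again a quasi-forest, and the analogous (even easier) fact that induced subgraphs of chordal graphs are chordal.

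For the direction ``$\mc{G}$ chordal $\Rightarrow$ $\Delta(\mc{G})$ quasi-forest'', I would invoke Dirac's theorem in its clique tree form: if $\mc{G}$ is chordal, the maximal cliques $F_1,\ldots,F_r$ admit an ordering such that for every $i\ge 2$ there exists $k<i$ with $F_i\cap(F_1\cup\cdots\cup F_{i-1})\sse F_k$. Since the facets of $\Delta(\mc{G})$ are exactly the maximal cliques of $\mc{G}$, this ordering exhibits $F_i$ as a leaf of $\langle F_1,\ldots,F_i\rangle$ with branch $F_k$, so $\Delta(\mc{G})$ is a quasi-forest.

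For the converse, I would first show that any quasi-forest $\Delta$ is a flag complex; in view of the remark preceding the lemma, this immediately yields $\Delta=\Delta(\mc{G})$ where $\mc{G}$ is the $1$-skeleton of $\Delta$, and also justifies the ``in particular'' statement. Suppose for contradiction that some $W=\{v_1,\ldots,v_k\}$ with $k\ge 3$ is a nonface of $\Delta$ but every pair $\{v_i,v_j\}$ is a face. The induced complex $\Delta_W$ is a quasi-forest; it has at least two facets (otherwise its unique facet, being a face of $\Delta$, would contain every pair in $W$ and hence equal $W$, contradicting $W\notin\Delta$). Pick a leaf $F$ of $\Delta_W$ with branch $F'\neq F$ and any $v\in W\ssm F$. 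For each $w\in F$ the edge $\{v,w\}$ is a face of $\Delta_W$ and thus lies in some facet $G$; since $v\notin F$ we have $G\neq F$, so $w\in F\cap G\sse F'$. Varying $w$ gives $F\sse F'$, contradicting that $F$ and $F'$ are distinct facets. Hence $\Delta$ is flag.

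It remains to show that $\mc{G}$ is chordal. If $\mc{G}$ had an induced cycle $v_1 v_2\cdots v_n v_1$ of length $n\ge 4$, then $\mc{G}[\{v_1,\ldots,v_n\}]$ would be the chordless $n$-cycle, so the facets of $\Delta_{\{v_1,\ldots,v_n\}}$ would be exactly the $n$ edges of the cycle. Each such edge $\{v_i,v_{i+1}\}$ meets its two neighbors in single distinct vertices $v_i$ and $v_{i+1}$, so no other edge facet contains both of these intersections; hence $\Delta_{\{v_1,\ldots,v_n\}}$ has no leaf, contradicting that it is a quasi-forest. The main obstacle here is the flag-ness step: the subtlety is choosing $W$ so that $\Delta_W$ has a genuine leaf/branch pair available, and then using that leaves are distinct from their branches to rule out the relation $F\sse F'$; once that is in hand, the chordality obstruction is the familiar ``induced cycle has no leaf'' observation.
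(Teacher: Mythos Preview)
The paper does not give its own proof of this lemma; it simply records it as ``the content of Lemma~3.1 in~\cite{He3}'' and moves on. So there is nothing in the paper to compare your argument against.

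Your argument is essentially correct. The running--intersection direction (chordal $\Rightarrow$ quasi-forest) is standard and clean, and your flag-ness and ``induced cycle has no leaf'' arguments are valid as written.

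One caution: both halves of your converse rest on the assertion that $\Delta_W$ is a quasi-forest whenever $\Delta$ is. This is true, but in much of the literature it is \emph{derived from} the very equivalence you are proving (via $\Delta_W=\Delta(\mc{G}_W)$ together with chordality being hereditary), which would make your argument circular. If you want a self-contained proof, it is cleaner to bypass this fact entirely. For flag-ness, argue by induction on the number of facets: a quasi-forest has a free vertex $v$ (any element of the last leaf $F_t$ not in its branch $F_k$); if $v\in W$ then every edge $\{v,w\}\sse W$ forces $w\in F_t$, so $W\sse F_t$; if $v\notin W$ and $W\not\sse F_t$, pick $u\in W\ssm F_t$ and use the branch to see that every $w\in W\cap F_t$ lies in $F_k$, whence all edges of $W$ already live in $\langle F_1,\ldots,F_{t-1}\rangle$ and induction applies. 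For chordality, the same free vertex $v$ is simplicial in the $1$-skeleton, and deleting it yields another quasi-forest (either $\langle F_1,\ldots,F_{t-1}\rangle$ or that complex with $F_t\ssm\{v\}$ appended as a new leaf with branch $F_k$), so one obtains a perfect elimination ordering. This avoids invoking the restriction property you labelled ``standard''.
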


In this section we will see that there is also a close connection between quasi-trees and the class of chordal 
hypergraphs.\\

\begin{defn}[Faridi, \cite{Fa1}, Zheng, \cite{Zh}]
Let $\Delta$ be a simplicial complex. A {\bf subcollection} $\Gamma$, of $\Delta$, is a subcomplex of $\Delta$ such that 
$\mc{F}(\Gamma)\sse\mc{F}(\Delta)$. A facet $F$ of $\Delta$ is called a {\bf leaf} if either $F$ is the only facet of 
$\Delta$, or there exists a facet $G$ in $\Delta$, $G\neq F$, such that $F\cap H\sse F\cap G$ for any facet $H$ in 
$\Delta$, $H\neq F$.\\
Assume $\Delta$ is connected. Then $\Delta$ is called a {\bf tree} if every subcollection of $\Delta$
has a leaf, and $\Delta$ is called a {\bf quasi-tree} if there exists an order 
$F_1,\ldots,F_t$ of the facets of $\Delta$ such that for each $i=1,\ldots,t$, $F_i$ is a leaf of the simplicial complex 
$\langle F_1,\ldots,F_i\rangle$, whose facets are $F_1,\ldots,F_i$. The order $F_1,\ldots, F_t$ is called a 
{\bf leaf order}. A simplicial complex with the property that every connected component is a (quasi-)tree is called a 
(quasi-){\bf forest}.
\end{defn}

\begin{rem}
A tree is a quasi-tree, but the converse need not hold.
\end{rem}

Let $\Delta$ be a simplicial complex. Denote by $\mc{R}_d(\Delta)$ the simplicial complex obtained from 
$\Delta$ by removing every facet $F$ with $1\le \dim F\le d-2$, and all faces $G\sse F$, with 
$1\le\dim G\le\dim F$, that are not faces of some facet of dimension greater than $d-2$. Conversely, denote by 
$\mc{A}_d(\Delta)$ the simplicial complex obtained from $\Delta$ by adding, as a facet, every face of dimension 
$d-2$ that is not already in the complex. 

\begin{lem}
Let $\Delta(\mc{H})$ and $\Delta(\mc{G})$ be the clique complexes of a $d$-uniform hypergraph $\mc{H}$, and a 
graph $\mc{G}$, respectively. Then the following holds:
\begin{itemize}
\item{$\mc{A}_d(\mc{R}_d(\Delta(\mc{H})))=\Delta(\mc{H})$}
\item{$\mc{R}_{d'}(\mc{A}_{d'}(\Delta(\mc{G})))=\Delta(\mc{G})$, where 
$d'-1\le\min\{\dim F; F\,\mr{facet\,in}\,\Delta(\mc{G})$\}.}
\end{itemize}
\end{lem}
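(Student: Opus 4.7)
The plan is to verify each identity by tracking which subsets of the vertex set $[n]$ survive each operator, stratified by size. The operator $\mc{R}_d$ keeps every vertex and keeps a face of dimension $\ge 1$ exactly when it lies in some face of dimension $\ge d-1$; the operator $\mc{A}_d$ leaves faces of size $\ge d$ untouched and forces every $(d-1)$-subset of $[n]$ to appear as a face (either as an original face or as a newly added facet, whose subsets are then automatically in the complex).

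For the first identity, I would first characterize $\mc{R}_d(\Delta(\mc{H}))$ as the complex whose faces are: all vertices; every face of $\Delta(\mc{H})$ of size $\ge d$ (since every such face contains a face of dimension $\ge d-1$, namely itself or an edge of $\mc{H}$); and every subset of size $2,\ldots,d-1$ that is contained in some edge of $\mc{H}$. Applying $\mc{A}_d$ then adds as facets exactly the $(d-1)$-subsets of $[n]$ that are not contained in any edge of $\mc{H}$. Since every subset of size $\le d-1$ is a face of $\Delta(\mc{H})$, and since subsets of the newly added facets fill in precisely the $(d-1)$-subsets (and smaller) that $\mc{R}_d$ had stripped away, we recover exactly $\Delta(\mc{H})$; faces of size $\ge d$ match on the nose because neither operator alters them.

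For the second identity, the hypothesis $d'-1\le\min\{\dim F:F\in \mc{F}(\Delta(\mc{G}))\}$ guarantees that every facet of $\Delta(\mc{G})$ has dimension $\ge d'-1$, hence survives $\mc{R}_{d'}$. The verification splits into two parts: (i) any face $G$ of $\Delta(\mc{G})$ with $\dim G\ge 1$ lies in some facet of $\Delta(\mc{G})$, hence in some facet of $\mc{A}_{d'}(\Delta(\mc{G}))$ of dimension $\ge d'-1$, and so is not removed; (ii) the $(d'-1)$-subsets added by $\mc{A}_{d'}$ that are \emph{not} faces of $\Delta(\mc{G})$ are facets of $\mc{A}_{d'}(\Delta(\mc{G}))$ of dimension $d'-2$, and any of their subsets of dimension $\ge 1$ that are also \emph{not} in $\Delta(\mc{G})$ lie only in these small added facets (because a subset of a non-face of $\Delta(\mc{G})$ may itself be a non-face, and in any case sits in no face of $\Delta(\mc{G})$), hence are stripped by $\mc{R}_{d'}$. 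Putting these together yields $\mc{R}_{d'}(\mc{A}_{d'}(\Delta(\mc{G})))=\Delta(\mc{G})$.

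The main obstacle I expect is the bookkeeping in part (ii) of the second identity: one must argue carefully that the \emph{only} extraneous faces introduced by $\mc{A}_{d'}$ are the added $(d'-1)$-facets together with their proper subsets that fail to be cliques of $\mc{G}$, and that each such extraneous face of dimension $\ge 1$ fails to be contained in a facet of dimension $\ge d'-1$ (so it really is removed by $\mc{R}_{d'}$, rather than being rescued by sitting inside some surviving large clique). The dimension hypothesis on $d'$ is precisely what makes this clean: without it, an original facet of $\Delta(\mc{G})$ could have dimension $\le d'-2$ and would be wiped out by $\mc{R}_{d'}$, breaking the identity.
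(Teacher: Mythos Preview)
Your argument is correct and is precisely the kind of definition-unwinding the paper has in mind: the paper's own proof is the single sentence ``This follows immediately from the definition of $\Delta(\mc{H})$,'' so your detailed face-by-face tracking is exactly what that sentence suppresses. One cosmetic point: in part (ii) your parenthetical ``a subset of a non-face of $\Delta(\mc{G})$ may itself be a non-face, and in any case sits in no face of $\Delta(\mc{G})$'' is slightly garbled (a subset of a non-face can certainly be a face); what you actually need, and what you use, is only that a set which is itself a non-face of $\Delta(\mc{G})$ lies in no face of $\Delta(\mc{G})$, hence in $\mc{A}_{d'}(\Delta(\mc{G}))$ can only sit inside the added $(d'-2)$-dimensional facets and is therefore stripped by $\mc{R}_{d'}$.
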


\begin{proof}
This follows immediately from the definition of $\Delta(\mc{H})$.
\end{proof}

\begin{lem}\label{isolated}
Let $\mc{H}=\mc{G}\cup_{K_j^d}K_i^d$ be a chordal hypergraph. If $i<d$ (that is $K_i^d$ is consists of $i$ 
isoloated vertices), we may exchange the attaching of $K_i^d$ to $K_j^d$, with $i-j$ attachings of the form 
\[
\mc{H}'=\mc{G}'\cup_{K_0^d}K_1^d
\]
\end{lem}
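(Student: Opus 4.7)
The plan is to exploit the paper's convention that $K_n^d$ with $n<d$ is interpreted as $n$ isolated points with no edges. In particular, since $i<d$, both $K_i^d$ and $K_j^d$ (with $j<i<d$) contain no edges at all, as every hyperedge would have to be a $d$-subset of a vertex set of size smaller than $d$. Consequently, the attaching $\mc{H}=\mc{G}\cup_{K_j^d}K_i^d$ does not introduce any new hyperedge; it merely identifies $j$ vertices of $K_i^d$ with the $j$ vertices of $K_j^d\sse\mc{X(G)}$, and adjoins the remaining $i-j$ vertices of $K_i^d$ to $\mc{G}$ as isolated points.

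Next, I would inspect the elementary attaching $\mc{G}'\cup_{K_0^d}K_1^d$. The subhypergraph $K_0^d$ has empty vertex set and empty edge set, so the gluing prescribes no identification at all; the right-hand piece $K_1^d$ consists of a single vertex with no edges. Hence each such attaching enlarges $\mc{G}'$ by exactly one isolated vertex without altering the edge set.

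From here the verification is a short induction. Starting from $\mc{G}_0:=\mc{G}$ and setting $\mc{G}_{k+1}:=\mc{G}_k\cup_{K_0^d}K_1^d$, the preceding observation gives that $\mc{G}_k$ equals $\mc{G}$ together with $k$ isolated vertices (up to relabeling of those vertices). Taking $k=i-j$ yields precisely $\mc{H}$. Since each intermediate $\mc{G}_k$ is obtained from a chordal hypergraph by attaching $K_1^d$ along $K_0^d$, each is chordal, so this sequence of $i-j$ attachings is a legitimate construction within the chordal class.

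The only point requiring care is confirming the interpretation of $K_0^d$ as the empty hypergraph (and of the corresponding gluing as disjoint union), but this is exactly the convention adopted in Section~2 and in the definition of chordal hypergraphs. With that understood, the lemma follows immediately with no further computation.
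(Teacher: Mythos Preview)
Your argument is correct and follows exactly the same idea as the paper's proof, which consists of the single sentence ``This is clear, since either way, we are just adding a number of isolated vertices.'' You have simply unpacked this observation in detail, making explicit the use of the convention on $K_n^d$ for $n<d$ and the meaning of gluing along $K_0^d$.
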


\begin{proof}
This is clear, since either way, we are just adding a number of isolated vertices.
\end{proof}

\begin{prop}
Let $\mc{H}$ be chordal hypergraph, and let $\mc{G}$ be a chordal graph. Then the following holds:
\begin{itemize}
\item[$(i)$]{$\mc{R}_d(\Delta(\mc{H}))$ is the clique complex of a chordal graph.}
\item[$(ii)$]{$\mc{A}_{d'}(\Delta(\mc{G}))$ is, for any $d'$, the clique complex of a $d'$-uniform 
chordal hypergraph.}
\end{itemize}

\end{prop}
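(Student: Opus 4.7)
The plan is to establish the two parts by induction on the defining inductive constructions, making use of the $\mc{R}_d/\mc{A}_d$ inverse relation from the preceding lemma. The two parts are independent statements about two separate objects, but taken together they suggest a duality between chordal hypergraphs and chordal graphs.

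For part (ii), I would exhibit the candidate hypergraph directly: given the chordal graph $\mc{G}$ with vertex set $V$, let $\mc{H}$ be the $d'$-uniform hypergraph on $V$ whose edges are the $d'$-element cliques of $\mc{G}$. Unwinding the definitions yields $\mc{A}_{d'}(\Delta(\mc{G})) = \Delta(\mc{H})$: for $|F|\ge d'$, membership in $\Delta(\mc{H})$ means every $d'$-subset of $F$ is a clique of $\mc{G}$, equivalently $F$ itself is a clique and hence a face of $\Delta(\mc{G})$; subsets of size less than $d'$ lie in both complexes (vacuously in $\Delta(\mc{H})$, and explicitly added by $\mc{A}_{d'}$ when missing from $\Delta(\mc{G})$). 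To see $\mc{H}$ is chordal, I would fix a perfect elimination ordering $v_1,\ldots,v_n$ of $\mc{G}$ and build $\mc{H}$ in reverse order: at step $i$ the already-placed neighborhood $N$ of $v_i$ is a clique of $\mc{G}$ of some size $k$, and the new hyperedges introduced are exactly the $d'$-subsets of $\{v_i\}\cup N$ containing $v_i$; this is precisely the chordal hypergraph attachment of $K_{k+1}^{d'}$ along $K_k^{d'}$.

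For part (i), I would induct on the chordal hypergraph construction of $\mc{H}$. The base case $\mc{H}=K_n^d$ is direct: for $n\ge d$ the complex $\mc{R}_d(\Delta(\mc{H}))$ is the full simplex on $n$ vertices, the clique complex of $K_n$; for $n<d$, $\mc{R}_d$ collapses the complex to $n$ isolated vertices, the clique complex of the edgeless graph on $n$ vertices. In the inductive step $\mc{H}=\mc{G}_0\cup_{K_j^d}K_i^d$ I would first invoke Lemma \ref{isolated} to reduce the subcase $i<d$ to successively adjoining isolated vertices, whose effect on $\mc{R}_d(\Delta(\cdot))$ is merely to adjoin isolated vertices to the associated graph. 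In the subcase $i\ge d$, the new hyperedges comprise all $d$-subsets of $\mc{X}(K_i^d)$, so after applying $\mc{R}_d$ and passing to the 1-skeleton the graph of $\mc{H}$ is obtained from that of $\mc{G}_0$ by attaching a clique on $\mc{X}(K_i^d)$ along $\mc{X}(K_j^d)$. I would verify the flag property of $\mc{R}_d(\Delta(\mc{H}))$ directly from the chordal construction and then appeal to the standard fact that attaching a clique along a sub-clique of a chordal graph yields a chordal graph.

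The main obstacle is the subcase $i\ge d$ with $j<d$ in the inductive step of (i): the $j$ attaching vertices in $\mc{G}_0$ need not already form a clique in the inductively constructed graph, so the gluing introduces edges among old vertices and could, a priori, produce chordless cycles. I would attempt to overcome this by combining Lemma \ref{isolated} with reshufflings of the construction sequence and a strengthened inductive invariant tracking which subsets of vertices are cliques in the intermediate graphs, ultimately invoking the rigidity of chordal hypergraphs (e.g.\ the absence of induced hypercycles noted earlier in the section) to rule out the offending chordless cycles.
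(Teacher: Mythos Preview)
Your approach to (ii) via a perfect elimination ordering is correct and somewhat different from the paper's. The paper converts a construction sequence $(K_0,K_{i_1}),\ldots,(K_{j_t},K_{i_t})$ for $\mc{G}$ term by term into a sequence of pairs of $d'$-complete hypergraphs, using Lemma~\ref{isolated} when $i_s<d'$, and then asserts that the resulting chordal hypergraph has clique complex $\mc{A}_{d'}(\Delta(\mc{G}))$. Your route through the explicit description of $\mc{H}$ as the hypergraph of $d'$-cliques of $\mc{G}$ is arguably cleaner: the identity $\mc{A}_{d'}(\Delta(\mc{G}))=\Delta(\mc{H})$ is immediate, and the perfect elimination ordering gives the chordal construction directly.

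For (i) your inductive strategy coincides with the paper's: represent $\mc{H}$ by its construction sequence $(K_0^d,K_{i_1}^d),\ldots,(K_{j_t}^d,K_{i_t}^d)$, invoke Lemma~\ref{isolated}, and replace each pair by $(K_{j_s},K_{i_s})$. You correctly isolate the problematic case $i\ge d>j$: the $j$ attaching vertices need not already form a clique in the graph built so far, so the converted sequence need not be a legitimate chordal-graph construction. The paper's proof simply asserts that the converted sequence represents a chordal graph whose clique complex is $\mc{R}_d(\Delta(\mc{H}))$, without addressing this.

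The obstacle you flag is in fact fatal, so your proposed workaround via reshuffling and a strengthened invariant cannot succeed. Take $d=3$ and build $\mc{H}$ on $\{1,\dots,6\}$ as follows: start with $K_3^3$ on $\{1,2,3\}$; attach $K_3^3$ on $\{3,4,5\}$ along $K_1^3=\{3\}$; attach $K_3^3$ on $\{1,4,6\}$ along $K_2^3=\{1,4\}$ (valid, since for $j<d$ any $j$ vertices constitute a $K_j^d$). Then $\mc{R}_3(\Delta(\mc{H}))$ has facets $\{1,2,3\},\{3,4,5\},\{1,4,6\}$, and $\{1,3,4\}$ is a minimal nonface all of whose $2$-element subsets are faces. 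Thus $\mc{R}_3(\Delta(\mc{H}))$ is not a flag complex and cannot be the clique complex of any graph. Your instinct to worry here was right, but the failure occurs already at the flag level, before chordality even enters.
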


\begin{proof}
$(i)$\quad A chordal hypergraph $\mc{H}$ may, according to its inductive construction, be represented by a 
sequence of pairs of $d$-complete hypergraphs
\[
(K_{0}^d,K_{i_1}^d),\ldots,(K_{j_t}^d,K_{i_t}^d), 
\]
where in each step of the construction of $\mc{H}$, $K_{i_s}^d$ is attached to $K_{j_s}^d$. We assume that in the 
construction of $\mc{H}$, Lemma \ref{isolated} has been used if necessary. Then every $d$-complete hypergraph in the sequence $(K_{0}^d,K_{i_1}^d),\ldots,(K_{j_t}^d,K_{i_t}^d)$ yields a complete graph, and, by considering 
the facets, it is clear that $\mc{R}_d(\Delta(\mc{H}))$ is the complex of the chordal graph that is represented by the 
sequence of pairs $(K_{0},K_{i_1}),\ldots,(K_{j_t},K_{i_t})$. This proves $(i)$.

Now, let $(K_{0},K_{j_1}),\ldots,(K_{j_t},K_{i_t})$ denote a chordal graph $\mc{G}$. If $d'-2\ge\dim\Delta(\mc{G})$, 
then the claim $(ii)$ is trivial, so we assume $d'-2<\dim\Delta(\mc{G})$. It is obvious that 
$\mc{A}_{d'}(\Delta(\mc{G}))$ will be the complex of a $d'$-uniform hypergraph $\mc{H}$, since every minimal nonface 
has dimension $d'-1$. We now show that $\mc{H}$ is chordal. We do this by constructing a sequence of pairs 
$(K_{0}^d,K_{i_1}^d),\ldots,(K_{j_r}^d,K_{i_r}^d)$, $r\ge t$, from the sequence 
$(K_{0},K_{i_1}),\ldots,(K_{j_t},K_{i_t})$, and showing that this sequence actually defines $\mc{H}$.

First note that if $i_s\ge d'$, a complete graph $K_{i_s}$ immediately yields a $d'$-complete hypergraph $K_{i_s}^{d'}$. 
For such $i_s$, we get a pair $(K_{j_s}^{d'},K_{i_s}^{d'})$, corresponding to the pair $(K_{j_s},K_{i_s})$ in the sequence 
representing $\mc{G}$. If $i_s<d'$, we may instead associate to the pair $(K_{j_s},K_{i_s})$ a sequence of ``trivial pairs'', 
as in Lemma \ref{isolated}. Continuing in this way, we obtain a sequence $(K_{j_1}^d,K_{i_i}^d),\ldots,(K_{j_r}^d,K_{i_r}^d)$, 
representing a $d'$-uniform chordal hypergraph $\mc{H}'$. 

The $d'$-uniform chordal hypergraph that correspond to the constructed sequence yields the same complex as $\mc{H}$, and hence 
we conclude that they must be the same.  
\end{proof}

\begin{cor}
To every chordal hypergraph $\mc{H}$ we may associate a quasi-forest $\Delta$, and vice versa.
\end{cor}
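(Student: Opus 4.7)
The plan is to obtain this corollary as an essentially immediate consequence of the preceding proposition together with the cited Lemma 3.1 from \cite{He3}, which asserts that a simplicial complex is a quasi-forest if and only if it is the clique complex of a chordal graph. The two operations $\mc{R}_d$ and $\mc{A}_{d'}$ defined just above will serve as the bridge between the hypergraph world and the graph (hence quasi-forest) world.

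First, starting from a chordal hypergraph $\mc{H}$, I would form the clique complex $\Delta(\mc{H})$ and then apply $\mc{R}_d$. By part $(i)$ of the preceding proposition, $\mc{R}_d(\Delta(\mc{H}))$ is the clique complex of some chordal graph $\mc{G}$, and by the quoted lemma this complex is a quasi-forest $\Delta$. That is the quasi-forest I associate to $\mc{H}$.

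Conversely, given a quasi-forest $\Delta$, the lemma produces a chordal graph $\mc{G}$ with $\Delta=\Delta(\mc{G})$. Choosing any $d'$ (for example, $d'-1$ no larger than $\min\{\dim F : F\,\text{facet of }\Delta(\mc{G})\}$ so that no information is lost), part $(ii)$ of the preceding proposition tells us that $\mc{A}_{d'}(\Delta(\mc{G}))$ is the clique complex of a $d'$-uniform chordal hypergraph $\mc{H}$, and this is the hypergraph associated to $\Delta$.

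There is really no obstacle beyond checking the direction of the correspondences; the main content has already been packaged into the preceding proposition and into the lemma from \cite{He3}. I would, however, remark briefly that the two constructions are not literally mutually inverse on the nose, because the choice of the uniformity parameter $d'$ is free in the reverse direction, so the corollary should be read as an existence statement (``we may associate''), which is exactly how it is phrased.
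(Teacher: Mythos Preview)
Your proposal is correct and follows essentially the same approach as the paper: both directions are obtained by combining the preceding proposition (parts $(i)$ and $(ii)$) with Lemma~3.1 of \cite{He3}, using $\mc{R}_d$ to pass from a chordal hypergraph to a quasi-forest and $\mc{A}_{d'}$ for the converse. Your additional remark about the non-inverseness of the two constructions and the freedom in choosing $d'$ is accurate and a welcome clarification, though the paper does not make it explicit.
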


\begin{proof}
If $\Delta$ is a quasi-forest, then $\Delta=\Delta(\mc{G})$ for some chordal graph 
(\cite{He3}, Lemma 3.1). Then, according to the proposition, we may associate to $\Delta$ the chordal 
hypergraph $\mc{H}$ whose clique complex is the complex $\mc{A}_{d'}(\Delta(\mc{G}))$ in the proposition. 
Conversely, given a chordal hypergraph $\mc{H}$ we may associate to it the quasi-forest $\mc{R}_d(\Delta(\mc{H}))$ 
from the proposition.
\end{proof}

\section{Homologically connected hypergraphs, connectivity, and depth}\label{connectivity}

For graphs and simplicial complexes there is a natural notion of being connected. 
This property may be described purely in terms of 0-homologies of certain chain 
complexes. Furthermore, the notion of being connected is very well behaved in the 
sense that if we choose the coefficients in the associated chain complex from a 
field $k$, it does not depend on the characteristic of $k$. This is one reason that 
arguments involving connectedness sometimes are very useful if one is trying to prove 
something about a graph or a simplicial complex. In \cite{Ja}, S.~Jacques deduces 
some lower bounds on Betti numbers of graph algebras. The arguments used there are 
based on the connectedness property of graphs. In this section we define in a 
homological fashion a concept of connected hypergraph.\\

\begin{defn}
Let $\mc{H}$ be a $d$-uniform hypergraph and $k$ be a field. The {\bf connectivity of $\mc{H}$ over $k$}, 
$\mr{con}(\mc{H})$, is defined as
\[
\mr{con}(\mc{H})=\min\{|V|; V\sse [n], \dim\tilde{H}_{d-2}((\Delta(\mc{H}))_{[n]\ssm V};k)\neq0\}.
\]
\end{defn}

\begin{defn}
Let $k$ be a field. If $\mc{H}$ is a $d$-uniform hypergraph with non zero connectivity over $k$, we say that 
$\mc{H}$ is {\bf homologically connected over $k$}. If $\mc{H}$ is homologically connected over every field, 
we say that $\mc{H}$ is homologically connected.
\end{defn}

Note that in the case of graphs, this is the usual notion of connectedness. Also, in terms of homological 
connectedness, the connectivity of a $d$-uniform hypergraph $\mc{H}$, is the cardinality of a minimal 
disconnecting set of vertices.

\begin{prop}
If $\mc{H}$ is homologically connected over $\mathbb{Q}$, it is homologically connected over every field $k$.
\end{prop}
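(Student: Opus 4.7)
The plan is to use the universal coefficient theorem combined with a chain-level rank argument.

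First I would observe a structural fact that makes UCT usable here: $\Delta(\mc{H})$ is defined by the condition $\binom{F}{d}\sse\mc{E}(\mc{H})$, so every $F\sse[n]$ with $|F|<d$ lies in $\Delta(\mc{H})$ vacuously; the same holds for each induced subcomplex $\Delta(\mc{H})_{[n]\ssm V}$. Consequently, in degrees $i\le d-2$ the reduced chain complex of $\Delta(\mc{H})_{[n]\ssm V}$ coincides with that of the $(d-2)$-skeleton of the full simplex on $[n]\ssm V$, which is acyclic. Hence $\tilde{H}_i(\Delta(\mc{H})_{[n]\ssm V};\mathbb{Z})=0$ for every $i<d-2$.

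Next I would apply the universal coefficient theorem in degree $d-2$:
\[
\tilde{H}_{d-2}(\Delta(\mc{H})_{[n]\ssm V};k)\;\cong\;\tilde{H}_{d-2}(\Delta(\mc{H})_{[n]\ssm V};\mathbb{Z})\otimes_{\mathbb{Z}}k\;\oplus\;\mr{Tor}_1^{\mathbb{Z}}\bigl(\tilde{H}_{d-3}(\Delta(\mc{H})_{[n]\ssm V};\mathbb{Z}),k\bigr).
\]
The Tor summand vanishes by the first step, so the $k$-homology in degree $d-2$ is simply the integral homology tensored with $k$. Assuming $\mr{con}_{\mathbb{Q}}(\mc{H})>0$ we have $\tilde{H}_{d-2}(\Delta(\mc{H})_{[n]\ssm V};\mathbb{Q})=0$ for every $V$ with $|V|<\mr{con}_{\mathbb{Q}}(\mc{H})$, which tells us that $\tilde{H}_{d-2}(\Delta(\mc{H})_{[n]\ssm V};\mathbb{Z})$ is a torsion group. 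I would then transfer the vanishing to every field $k$ by a chain-level rank comparison for $\partial_{d-1}\colon C_{d-1}\to C_{d-2}$: because $\partial_{d-2}$ is the boundary map of the full simplex $(d-2)$-skeleton, its rank is characteristic-independent, and the work is to see that the rank of $\partial_{d-1}$ restricted to edges of $\mc{H}$ is likewise the same over $k$ as over $\mathbb{Q}$. Running the argument for all eligible $V$ then gives $\mr{con}_k(\mc{H})\ge\mr{con}_{\mathbb{Q}}(\mc{H})>0$, which is the claim.

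The hard part is precisely this rank-transfer step. In general the rank of an integer matrix can drop modulo a prime $p$ whenever $p$ divides one of its elementary divisors, and the UCT alone only guarantees that $\tilde{H}_{d-2}(\Delta(\mc{H})_{[n]\ssm V};\mathbb{Z})$ is a torsion group rather than zero. To close the gap one must argue that the image of the $d$-element edges under $\partial_{d-1}$ is a saturated subgroup of the integral $(d-2)$-cycles of the ambient simplex --- equivalently, that the relevant elementary divisors are all equal to $1$ --- so that no torsion of matching characteristic with $k$ can appear in degree $d-2$.
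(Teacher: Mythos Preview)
Your route via the universal coefficient theorem is essentially the paper's own, but carried out more carefully. The paper writes UCT as
\[
\tilde{H}_{i}(\Delta(\mc{H});k)\cong\tilde{H}_{i}(\Delta(\mc{H});\mathbb{Q})\otimes k\oplus\mr{Tor}_1^{\mathbb{Z}}(\tilde{H}_{i-1}(\Delta(\mc{H})),k),
\]
with $\mathbb{Q}$ where the correct statement has $\mathbb{Z}$, and then simply observes that $\tilde{H}_l(\Delta(\mc{H});k)=0$ for all $l\le d-3$ to kill the Tor term. With that (mis)stated formula the argument would finish; with the correct formula one is left with exactly the torsion issue you isolate in your last paragraph.

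That gap is not a technicality to be filled in: it is fatal, because the proposition as stated is false. Take $d=3$ and let $\mc{H}$ be the $3$-uniform hypergraph on $[6]$ whose edges are the ten $2$-faces of the minimal six-vertex triangulation of $\mathbb{RP}^2$. This triangulation has $\binom{6}{2}=15$ edges, so its $1$-skeleton is $K_6$, and no four vertices carry all four triangles among them (each edge of the surface lies in exactly two triangles, so the boundary of such a tetrahedron would have to be a union of components of the connected surface, which has ten faces, not four). Hence $\Delta(\mc{H})$ is exactly this copy of $\mathbb{RP}^2$, and $\tilde{H}_{1}(\Delta(\mc{H});\mathbb{Z})\cong\mathbb{Z}/2$. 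Thus $\mc{H}$ is homologically connected over $\mathbb{Q}$ but not over $\mathbb{F}_2$. The saturation claim you propose --- that the image of $\partial_{d-1}$ on $\mc{E}(\mc{H})$ is a direct summand of the integral $(d-2)$-cycles of the ambient simplex --- therefore fails in general, and no rank-transfer argument can rescue it.
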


\begin{proof}
By the Universal Coefficient Theorem we have
\[
\tilde{H}_{i}(\Delta(\mc{H});k)\cong\tilde{H}_{i}(\Delta(\mc{H});\mathbb{Q})\otimes k\oplus
\mr{Tor}_1^{\mathbb{Z}}(\tilde{H}_{i-1}(\Delta(\mc{H})),k).
\]
One should note that when we consider a complex $\Delta_{\mc{H}}$ of a non empty $d$-uniform hypergraph, 
$\tilde{H}_{l}(\Delta(\mc{H});k)=0$ for every $l\le d-3$ over every field $k$.
\end{proof}

Recall {\it Hochster's formula}.

\begin{thm}[Hochster's formula] Let $R/I_\Delta$ be the Stanley--Reisner ring of a
simplicial complex $\Delta$. The non zero Betti numbers, 
$\beta_{i,{\bf j}}(R/I_\Delta)=\dim\mr{Tor}_i^R(R/I_{\Delta},k)_{{\bf j}}$, of $R/I_\Delta$, are only in 
squarefree degrees $\bf j$ and may be expressed as
\[
\beta_{i,{\bf j}}(R/I_\Delta)=\dim_k\tilde{H}_{|{\bf
j}|-i-1}(\Delta_{\bf j};k).
\]
Hence the total $i$'th Betti number may be expressed as
\[
\beta_i(R/I_\Delta)=\sum_{V\sse[n]}\dim\tilde{H}_{|V|-i-1}(\Delta_V;k).
\]
\end{thm}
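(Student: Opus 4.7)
The plan is to compute the multigraded Tor on the left hand side by using the Koszul resolution of $k$ over $R$, decomposing everything by the natural $\mathbb{Z}^n$-grading, and then identifying each squarefree component of the resulting complex with the reduced simplicial chain complex of the induced subcomplex $\Delta_V$ for the appropriate $V\sse[n]$.

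Concretely, let $K_\bullet=K_\bullet(x_1,\ldots,x_n;R)$ be the Koszul complex of $R$ with respect to $x_1,\ldots,x_n$; this is a minimal $\mathbb{Z}^n$-graded free resolution of $k$ over $R$. Therefore
\[
\mr{Tor}_i^R(R/I_\Delta,k)\;\cong\;H_i\bigl(K_\bullet\otimes_R R/I_\Delta\bigr),
\]
and the isomorphism respects the $\mathbb{Z}^n$-grading. Since $R/I_\Delta$ has a $k$-basis consisting of the squarefree monomials $x^F$ with $F\in\Delta$, and since each $K_j$ is generated in squarefree multidegrees, the complex $K_\bullet\otimes R/I_\Delta$ has zero multigraded pieces outside squarefree degrees. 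Thus it suffices to fix a subset $V\sse[n]$ (which plays the role of the squarefree degree $\mathbf{j}$) and analyze the strand $[K_\bullet\otimes R/I_\Delta]_V$.

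Next, one writes down an explicit basis for that strand: in homological degree $i$ the basis vectors are of the form $e_W\otimes x^{V\ssm W}$ where $W\sse V$ has $|W|=i$ and $V\ssm W\in\Delta$, i.e.\ $V\ssm W\in\Delta_V$. The Koszul differential, when restricted to this strand, sends $e_W\otimes x^{V\ssm W}$ to an alternating sum of $e_{W\ssm\{v\}}\otimes x^{V\ssm(W\ssm\{v\})}$ for $v\in W$ (the monomial factor in front is $\pm x_v$, which combines with the stored $x^{V\ssm W}$ to give $\pm x^{V\ssm(W\ssm\{v\})}$, and this is nonzero modulo $I_\Delta$ iff the corresponding face is still in $\Delta_V$). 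Setting $\sigma=V\ssm W$, this is, up to a uniform sign convention, the simplicial boundary map of $\Delta_V$ with coefficients in $k$, with $\sigma$ in dimension $|\sigma|-1=|V|-i-1$. So the strand is isomorphic to $\tilde{C}_{|V|-i-1}(\Delta_V;k)$ as a chain complex shifted into homological degree $i$.

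Taking homology in the fixed strand yields $\beta_{i,V}(R/I_\Delta)=\dim_k\tilde{H}_{|V|-i-1}(\Delta_V;k)$, which is the first formula after renaming $V$ as $\mathbf{j}$; summing over all $V\sse[n]$ gives the total Betti number formula. The main nuisance is the sign bookkeeping when matching the Koszul differential to the simplicial boundary, but this is a routine check using a chosen total order on $[n]$ to define the face orientations on $\Delta_V$, and no hypothesis on $\Delta$ beyond being a simplicial complex is needed.
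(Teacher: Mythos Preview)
The paper does not actually prove Hochster's formula; it simply cites \cite{BH}, Theorem 5.5.1. Your approach via the Koszul resolution of $k$ is the standard one (and is essentially what one finds in Bruns--Herzog), and your identification of each squarefree strand with the reduced simplicial chain complex of $\Delta_V$ is correct.

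There is, however, a genuine gap in your argument for why the Betti numbers vanish outside squarefree degrees. You assert that $R/I_\Delta$ has a $k$-basis consisting of the squarefree monomials $x^F$ with $F\in\Delta$; this is false. A $k$-basis for $R/I_\Delta$ is given by \emph{all} monomials $x^{\mathbf a}$ whose support lies in $\Delta$, not only the squarefree ones. Consequently $K_\bullet\otimes_R R/I_\Delta$ certainly has nonzero components in non-squarefree multidegrees, and a separate argument is needed to show those strands are acyclic. The standard fix is short: if $\mathbf a$ is not squarefree, choose $j$ with $a_j\ge 2$; then whether or not $j\in W$ has no effect on the condition $\mathrm{supp}(\mathbf a-\mathbf 1_W)\in\Delta$, so the assignment $e_W\otimes x^{\mathbf a-\mathbf 1_W}\mapsto \pm\, e_{W\cup\{j\}}\otimes x^{\mathbf a-\mathbf 1_{W\cup\{j\}}}$ gives a contracting homotopy on the $\mathbf a$-strand. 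With this patch in place, your proof is complete.
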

\begin{proof}
See \cite{BH}, Theorem 5.5.1.
\end{proof}
From this it follows that
\[
\beta_{i,j}(R/I_\Delta)=\sum_{\substack{V\sse [n]\\
|V|=j}}\dim\tilde{H}_{|V|-i-1}(\Delta_V;k).
\]

\begin{prop}
If $\mc{G}$ is an induced hypergraph of a $d$-uniform hypergraph $\mc{H}$, such that $\mc{G}$ is not 
homologically connected over $k$, then 
\[
\beta_{|\mc{X(G)}|-d+1}(\mc{H})\neq 0
\]
\end{prop}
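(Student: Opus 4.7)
The plan is to apply Hochster's formula to a single, carefully chosen summand, namely the one indexed by $V=\mc{X(G)}\sse[n]$. The first step is the combinatorial identification
\[
(\Delta_{\mc{H}})_{\mc{X(G)}}\ =\ \Delta_{\mc{G}},
\]
i.e.\ the restriction of the independence complex of $\mc H$ to $\mc{X(G)}$ coincides with the independence complex of the induced subhypergraph $\mc G=\mc H_{\mc X(\mc G)}$. This is immediate from the definitions: a subset $F\sse\mc{X(G)}$ avoids every edge of $\mc{H}$ iff it avoids every edge of $\mc{G}$, because the edges of $\mc{H}$ contained in $\mc{X(G)}$ are, by definition, exactly the edges of $\mc G$.

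Set $i=|\mc{X(G)}|-d+1$. Then, with $V=\mc{X(G)}$, the degree of reduced homology appearing in the $V$-summand of Hochster's formula is
\[
|V|-i-1\ =\ |\mc{X(G)}|-(|\mc{X(G)}|-d+1)-1\ =\ d-2.
\]
Since every summand in Hochster's formula is non-negative, we obtain
\[
\beta_{i}(R/I(\mc{H}))\ \geq\ \dim_k\tilde H_{d-2}(\Delta_{\mc G};k).
\]
By assumption $\mc{G}$ is not homologically connected over $k$, i.e.\ $\mr{con}(\mc G)=0$; taking the disconnecting set $V'=\emptyset$ in the definition of $\mr{con}$, this is precisely the statement $\tilde H_{d-2}(\Delta_{\mc{G}};k)\neq 0$. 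The conclusion follows.

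There is essentially no serious obstacle here: the whole argument is a one-line application of Hochster's formula once the right summand is singled out. The only bookkeeping is the identification $(\Delta_{\mc{H}})_{\mc{X(G)}}=\Delta_{\mc{G}}$, expressing the compatibility of the independence complex construction with the operation of taking induced subhypergraphs, which holds by definition. The hardest conceptual point is recognising that the exponent $|\mc{X(G)}|-d+1$ in the statement is engineered precisely so that the degree of reduced homology in Hochster's formula lands on $d-2$, matching the degree used in the definition of homological connectivity.
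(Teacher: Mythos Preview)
Your argument has the right skeleton --- pick $V=\mc{X(G)}$ in Hochster's formula and land on $\tilde H_{d-2}$ --- but you have applied it to the wrong simplicial complex. Throughout Section~\ref{connectivity} the relevant complex is the \emph{clique} complex $\Delta(\mc{H})$, not the independence complex $\Delta_{\mc{H}}$: the connectivity $\mr{con}(\mc{G})$ is defined via $\tilde H_{d-2}((\Delta(\mc{G}))_{[n]\ssm V};k)$, so ``$\mc G$ not homologically connected'' says $\tilde H_{d-2}(\Delta(\mc{G});k)\neq 0$, and the Betti number in the statement is $\beta_i(R/I_{\Delta(\mc{H})})$, not $\beta_i(R/I(\mc{H}))$. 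Your key identity $(\Delta_{\mc{H}})_{\mc{X(G)}}=\Delta_{\mc{G}}$ and your justification (``$F$ avoids every edge of $\mc H$ iff \ldots'') are statements about independence complexes; the hypothesis gives you no control over $\tilde H_{d-2}(\Delta_{\mc{G}};k)$.

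This is not merely a notational slip. For $d=2$, take $\mc{G}$ to be two disjoint edges on $\{1,2,3,4\}$. Then $\mc{G}$ is disconnected, hence not homologically connected, and indeed $\tilde H_0(\Delta(\mc{G});k)\neq 0$. But the independence complex $\Delta_{\mc{G}}$ is the $4$-cycle on $\{1,3\},\{3,2\},\{2,4\},\{4,1\}$, which is connected: $\tilde H_0(\Delta_{\mc{G}};k)=0$. So your displayed inequality $\beta_i(R/I(\mc{H}))\ge\dim_k\tilde H_{d-2}(\Delta_{\mc{G}};k)$ can vanish even when the hypothesis holds. The fix is simply to run the identical argument with $\Delta(\mc{H})$ in place of $\Delta_{\mc{H}}$, using $(\Delta(\mc{H}))_V=\Delta(\mc{H}_V)$; that is exactly the paper's proof.
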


\begin{proof}
We will use the fact that $(\Delta(\mc{H}))_V=\Delta(\mc{H}_V)$. Consider Hochster's 
formula with $i=|\mc{X(G)}|-d+1$;
\[
\beta_{|\mc{X(G)}|-d+1}(R/I_{\Delta(\mc{H})})=\sum_{V\sse\mc{X(H)}}\dim_k\tilde{H}_{|V|-|\mc{X(G)}|+d-2}
(\Delta(\mc{H}_V);k)\ge
\]
\[\dim_k\tilde{H}_{d-2}(\Delta_{\mc{X(G)}};k)>0.
\]
\end{proof}

Recall the {\it Auslander-Buchsbaum} formula: If $M$ is a finitely generated $R$-module with $\mr{pd}_R(M)<\infty$, 
then $\mr{pd}_R(M)+\mr{depth}_R(M)=\mr{depth}_R(R)$. For a proof, see \cite{BH}, Theorem 1.3.3.

\begin{cor}
If $\mc{G}$ is an induced hypergraph of a $d$-uniform hypergraph $\mc{H}$, such that $\mc{G}$ is not 
homologically connected over $k$, then 
\[
|\mc{X(G)}|-d+1\le\mr{pd}_R(R/I_{\Delta(\mc{H})})\le n ,
\]
\[
0\le\mr{depth}_R(R/I_{\Delta(\mc{H})})\le n-|\mc{X(G)}|+d-1,
\]

where $n=|\mc{X(H)}|$.
\end{cor}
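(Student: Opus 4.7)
The plan is to read this corollary as a direct consequence of the preceding proposition combined with the Auslander--Buchsbaum formula, so the proof is a short bookkeeping argument with no real obstacle.

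First I would establish the lower bound on $\mr{pd}_R(R/I_{\Delta(\mc{H})})$. The preceding proposition gives $\beta_{|\mc{X(G)}|-d+1}(R/I_{\Delta(\mc{H})})\neq 0$. By the definition of projective dimension recalled in Section 2, this forces
\[
\mr{pd}_R(R/I_{\Delta(\mc{H})}) \ge |\mc{X(G)}|-d+1.
\]
The matching upper bound $\mr{pd}_R(R/I_{\Delta(\mc{H})})\le n$ is Hilbert's syzygy theorem applied to the polynomial ring $R=k[x_1,\ldots,x_n]$, so the first displayed chain of inequalities is done.

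Next I would convert these into the claimed depth bounds. Since $R/I_{\Delta(\mc{H})}$ is a finitely generated $R$--module and its projective dimension is finite by the previous paragraph, the Auslander--Buchsbaum formula quoted just before the statement applies and yields
\[
\mr{depth}_R(R/I_{\Delta(\mc{H})}) = \mr{depth}_R(R) - \mr{pd}_R(R/I_{\Delta(\mc{H})}) = n - \mr{pd}_R(R/I_{\Delta(\mc{H})}).
\]
Substituting $|\mc{X(G)}|-d+1\le \mr{pd}_R(R/I_{\Delta(\mc{H})})\le n$ into this identity flips the inequalities and produces
\[
0 \le \mr{depth}_R(R/I_{\Delta(\mc{H})}) \le n-|\mc{X(G)}|+d-1,
\]
which is the second claim. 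Since every step is either a recollection of a quoted theorem or substitution, there is no genuinely difficult step; the only thing to verify carefully is that $\mr{pd}_R(R/I_{\Delta(\mc{H})})$ is finite so that Auslander--Buchsbaum is applicable, which is ensured by the syzygy theorem.
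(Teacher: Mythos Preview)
Your proof is correct and follows essentially the same approach as the paper: invoke the preceding proposition for the lower bound on projective dimension, Hilbert's syzygy theorem for the upper bound, and then Auslander--Buchsbaum to convert these into the depth inequalities. Your write-up is slightly more explicit (e.g., noting why the projective dimension is finite), but the argument is identical in substance.
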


\begin{proof}
It is well know that (Hilbert's syzygy theorem) $n\ge\mr{pd}_R(R/I_{\Delta(\mc{H})})$. Furthermore, 
according to the lemma, $\beta_{|\mc{X(G)}|-d+1}(\mc{H})>0$. This gives the first assertion. The second 
follows from the first using the Auslander-Buchsbaum formula.
\end{proof}

\begin{cor}
If $\mc{H}$ is a $d$-uniform hypergraph that is not homologically connected over $k$, then
\[
n-d+1\le\mr{pd}_R(R/I_{\Delta(\mc{H})})\le n ,
\]
\[
0\le\mr{depth}_R(R/I_{\Delta(\mc{H})})\le d-1,
\]
where $n=|\mc{X(H)}|$.
\end{cor}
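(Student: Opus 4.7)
The plan is to derive this corollary as an immediate special case of the preceding corollary by taking the induced hypergraph $\mc{G}$ to be $\mc{H}$ itself. Since $\mc{H}$ is (trivially) an induced subhypergraph of itself on the full vertex set, and by hypothesis $\mc{H}$ is not homologically connected over $k$, the hypotheses of the previous corollary are satisfied with $\mc{G}=\mc{H}$.

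Plugging $|\mc{X(G)}|=|\mc{X(H)}|=n$ into the inequalities
\[
|\mc{X(G)}|-d+1\le\mr{pd}_R(R/I_{\Delta(\mc{H})})\le n,\qquad 0\le\mr{depth}_R(R/I_{\Delta(\mc{H})})\le n-|\mc{X(G)}|+d-1
\]
immediately yields
\[
n-d+1\le\mr{pd}_R(R/I_{\Delta(\mc{H})})\le n,\qquad 0\le\mr{depth}_R(R/I_{\Delta(\mc{H})})\le d-1,
\]
as desired.

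There is essentially no obstacle here, since the work has already been done in the preceding corollary (which in turn uses Hochster's formula, the Auslander--Buchsbaum formula, and Hilbert's syzygy theorem). The only small point to verify is that $\mc{H}$ itself counts as an induced subhypergraph of $\mc{H}$ in the sense used earlier, which is clear from the definition given in the Basics section: taking $\mc{Y}=\mc{X(H)}$ gives $\mc{H}_{\mc{Y}}=\mc{H}$.
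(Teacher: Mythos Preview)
Your proposal is correct and matches the paper's intent: the paper states this corollary without proof, immediately after the preceding corollary, so the evident (and only reasonable) reading is exactly the specialization $\mc{G}=\mc{H}$ that you carry out.
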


If $\mc{H}$ is a $d$-uniform hypergraph that is not homologically connected, we will see in Theorem \ref{homconn} 
below, the two inequalities in the above corollary may in fact be exchanged with two equalities. First, we prove the 
following theorem, which connects the depth of the Stanley--Reisner ring $R/I_{\Delta(\mc{H})}$, with the 
connectivity of $\mc{H}$.

\begin{thm}\label{conn/depth}
Let $\Delta(\mc{H})$ be the complex of a $d$-uniform hypergraph $\mc{H}$ and put 
$g=\mr{depth}_R(R/I_{\Delta(\mc{H})})$. Then,
\[
\mr{con}(\mc{H})=g-d+r+1,
\]
where $r$ is the minimal number such that $\beta_{n-g-r,n-g-r+d-1}(R/I_{\Delta(\mc{H})},k)\neq0$. That is, 
$r$ is the minimal number such that there exists a $V\sse [n]$, $|V|=n-(g-d+r+1)$ with 
$\tilde{H}_{d-2}(\Delta_V;k)\neq 0$ 
\end{thm}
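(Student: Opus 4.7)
The plan is to reduce both $\mr{con}(\mc{H})$ and $r$ to the single combinatorial quantity
\[
M := \max\{|U|\,:\,U\sse[n],\ \tilde H_{d-2}(\Delta(\mc{H})_U;k)\neq 0\},
\]
and then solve. By the very definition of connectivity, setting $V = [n]\ssm U$ shows that minimizing $|V|$ over disconnecting sets is the same as maximizing $|U|$ over those $U$ with nonvanishing $(d-2)$-homology; hence $\mr{con}(\mc{H}) = n - M$ at once.

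Next, I would specialize Hochster's formula to the grading strip $j = i + d - 1$, obtaining
\[
\beta_{i,\,i+d-1}(R/I_{\Delta(\mc{H})}) \;=\; \sum_{|U|=i+d-1}\dim_k\tilde H_{d-2}(\Delta(\mc{H})_U;k).
\]
Since each summand is a non-negative integer, $\beta_{i,i+d-1}\neq 0$ if and only if some $U$ of cardinality $i+d-1$ has $\tilde H_{d-2}(\Delta(\mc{H})_U;k)\neq 0$, and so the largest $i$ for which $\beta_{i,i+d-1}\ne 0$ is exactly $M - d + 1$. By the Auslander--Buchsbaum formula we have $\mr{pd}_R(R/I_{\Delta(\mc{H})}) = n - g$, so the indices $i = n - g - r$ with $r\ge 0$ are precisely the admissible Betti positions. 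The minimal $r$ with $\beta_{n-g-r,\,n-g-r+d-1}\neq 0$ therefore corresponds to the maximal such $i$, giving $n - g - r = M - d + 1$, whence $r = \mr{con}(\mc{H}) + d - 1 - g$ and $\mr{con}(\mc{H}) = g - d + r + 1$.

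The proof is mostly bookkeeping; what must be seen cleanly is the index alignment. The strip $j = i + d - 1$ is exactly the one that picks out $(d-2)$-dimensional reduced homology of induced subcomplexes, and $(d-2)$-dimensional homology of $\Delta(\mc{H})_U$ is precisely what governs the connectivity of a $d$-uniform hypergraph, so both sides of the claimed identity are reading off the same data. The only additional point is that a minimal such $r$ actually exists, but this is immediate whenever $\mc{H}\neq K_n^d$: the minimal generators of $I_{\Delta(\mc{H})}$ sit in degree $d$, so already $\beta_{1,d}\neq 0$, while Auslander--Buchsbaum bounds $i$ above by $n-g$, confining the sought $r$ to a nonempty subset of $\{0,1,\dots,n-g-1\}$.
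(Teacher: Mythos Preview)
Your argument is correct and follows essentially the same route as the paper: both proofs use Hochster's formula on the strip $j=i+d-1$ to identify nonvanishing $\beta_{i,i+d-1}$ with the existence of an induced subcomplex of size $i+d-1$ having nonzero $(d-2)$-homology, then invoke Auslander--Buchsbaum to translate the minimal $r$ into the maximal such subcomplex size and hence into $\mr{con}(\mc{H})$. Your explicit introduction of $M$ and the remark on the existence of $r$ make the bookkeeping a touch cleaner, but the underlying logic is identical.
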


\begin{rem}
If $\mc{H}$ is a $d$-uniform hypergraph, recall that the {\it linear strand} of a resolution of 
$R/I_{\Delta(\mc{H})}$ (or for short, the linear strand of $R/I_{\Delta(\mc{H})}$) is the part of the 
resolution that is of degrees $(i,i+d-1)$. Note that 
$r=\mr{pd}_R(R/I_{\Delta(\mc{H})})-\max\{i\,;\,\beta_{i,i+d-1}(R/I_{\Delta(\mc{H})})\neq0\}$.
\end{rem}

\begin{proof}
We know that $\mr{Tor}_{n-g}^R(R/I_{\Delta(\mc{H})},k)\neq0$, but $\mr{Tor}_{n-i}^R(R/I_{\Delta(\mc{H})};k)=0$ 
for every $i<g$. In particular, $\mr{Tor}_{n-i}^R(R/I_{\Delta(\mc{H})};k)_j=0$ in every degree $j$ if $i<g$. 
This gives, via Hochster's formula, that $\tilde{H}_{|V|-(n-i+1)}(\Delta_V;k)=0$ for every $V\sse [n]$, $i<g$, 
and that there exists a $V\sse [n]$ such that $\tilde{H}_{|V|-(n-g+1)}(\Delta_V;k)\neq0$. Let $r\ge0$ be the 
minimal number such that $\mr{Tor}_{n-(g+r)}^R(R/I_{\Delta(\mc{H})};k)_j\neq0$ for $j=n-(g-d+r+1)$. This is 
the same thing as saying that there exists a $V\sse [n]$, $|V|=n-(g-d+r+1)$, such that 
$\tilde{H}_{d-2}(\Delta_V;k)\neq0$ but at the same time, for any $V\sse [n]$, $|V|>n-(g-d+r+1)$, the homology 
of $\Delta_V$ in degree $d-2$ is zero. This means precisely that $\mr{con}(\mc{H})=g-d+r+1$. 
\end{proof}

If $\mc{H}$ is 2-uniform (that is, if $\mc{H}$ is an ordinary simple graph) and we have linear resolution, the 
following is Lemma 3 in \cite{F1}.

\begin{cor}\label{baba}
Let $\mc{H}$ be a $d$-uniform hypergraph and suppose the length of the linear strand of $R/I_{\Delta(\mc{H})}$ is 
maximal. Then 
\[
\mr{depth}_R(R/I_{\Delta(\mc{H})})=\mr{con}(\mc{H})+d-1.
\]
\end{cor}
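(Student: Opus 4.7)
The plan is to derive this corollary directly from Theorem \ref{conn/depth} by interpreting the hypothesis "the linear strand has maximal length" as forcing $r=0$ in the theorem's formula.

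First I would unpack the remark that precedes the proof of Theorem \ref{conn/depth}: by definition, $r = \mr{pd}_R(R/I_{\Delta(\mc{H})}) - \max\{i\,;\,\beta_{i,i+d-1}(R/I_{\Delta(\mc{H})})\neq 0\}$. The linear strand of a resolution lives in bidegrees $(i,i+d-1)$, and its length is naturally measured by the largest homological index $i$ in which a nonzero Betti number in the strand occurs. The length of the linear strand is certainly bounded above by the projective dimension $\mr{pd}_R(R/I_{\Delta(\mc{H})}) = n-g$, where $g=\mr{depth}_R(R/I_{\Delta(\mc{H})})$ by the Auslander--Buchsbaum formula. So "maximal length" should be read as the statement that this upper bound is attained, i.e.\ that $\beta_{n-g,(n-g)+d-1}(R/I_{\Delta(\mc{H})})\neq 0$. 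Equivalently, $r=0$.

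Having made this identification, the conclusion is immediate: Theorem \ref{conn/depth} gives
\[
\mr{con}(\mc{H}) = g-d+r+1 = g-d+1,
\]
and rearranging yields $g = \mr{con}(\mc{H}) + d-1$, which is the desired equality.

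There is no real obstacle here; the only step that requires a moment's thought is the interpretation of "maximal length of the linear strand", and this is pinned down precisely by the remark immediately preceding Theorem \ref{conn/depth}. The rest is a direct substitution.
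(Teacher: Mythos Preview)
Your proposal is correct and matches the paper's intent: the corollary is stated without proof, as it follows immediately from Theorem \ref{conn/depth} together with the remark identifying $r$ with the difference between the projective dimension and the length of the linear strand, so that ``maximal length'' forces $r=0$.
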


\begin{thm}\label{homconn}
Let $\mc{H}$ be a $d$-uniform hypergraph. Then $\mc{H}$ is not homologically connected over $k$ 
precisely when
\[
\mr{pd}_R(R/I_{\Delta(\mc{H})})=n-d+1,
\]
\[
\mr{depth}_R(R/I_{\Delta(\mc{H})})=d-1,
\]
where $n=|\mc{X(H)}|$, and the length of the linear strand of $R/I_{\Delta(\mc{H})}$ is maximal.
\end{thm}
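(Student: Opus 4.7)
The plan is to deduce the statement from Theorem~\ref{conn/depth} together with an a priori upper bound on the projective dimension of $R/I_{\Delta(\mc{H})}$. Once the bound is in place, everything falls out of the formula $\mr{con}(\mc{H}) = g - d + r + 1$ established there.

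First I would establish the inequality $\mr{pd}_R(R/I_{\Delta(\mc{H})}) \le n - d + 1$. The key observation is that for any $V \sse [n]$ and any $F \sse V$ with $|F| \le d - 1$, the condition $\binom{F}{d} = \emptyset \sse \mc{E}(\mc{H})$ is satisfied vacuously, so $F \in \Delta(\mc{H})_V = \Delta(\mc{H}_V)$. Hence $\Delta(\mc{H})_V$ contains the full $(d-2)$-skeleton of the simplex on $V$, which forces $\tilde{H}_i(\Delta(\mc{H})_V; k) = 0$ for all $i \le d - 3$. Hochster's formula then yields $\beta_{i,j}(R/I_{\Delta(\mc{H})}) = 0$ whenever $j - i - 1 < d - 2$, i.e.\ whenever $j < i + d - 1$. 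Since any nonzero $\beta_{i,j}$ has $j \le n$, this forces $\mr{pd}_R(R/I_{\Delta(\mc{H})}) \le n - d + 1$, and by Auslander--Buchsbaum $\mr{depth}_R(R/I_{\Delta(\mc{H})}) \ge d - 1$.

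For the forward direction, assume $\mc{H}$ is not homologically connected over $k$, i.e.\ $\mr{con}(\mc{H}) = 0$. Theorem~\ref{conn/depth} gives $\mr{con}(\mc{H}) = g - d + r + 1$ with $g = \mr{depth}_R(R/I_{\Delta(\mc{H})})$ and $r$ as in the remark following that theorem, so $g + r = d - 1$. Combined with $g \ge d - 1$ (from the pd bound above) and $r \ge 0$, this forces $g = d - 1$ and $r = 0$. Then $\mr{pd}_R(R/I_{\Delta(\mc{H})}) = n - d + 1$, and by the remark $r = 0$ is precisely the statement that the linear strand of $R/I_{\Delta(\mc{H})}$ reaches position $\mr{pd}_R(R/I_{\Delta(\mc{H})})$, i.e.\ has maximal length.

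The converse is immediate from the same machinery: assuming the three conditions, maximality of the linear strand is by the remark the statement $r = 0$, and then Theorem~\ref{conn/depth} gives $\mr{con}(\mc{H}) = (d - 1) - d + 0 + 1 = 0$, so $\mc{H}$ is not homologically connected. The main step requiring care is the a priori pd bound; everything else is direct bookkeeping with the formulas already established.
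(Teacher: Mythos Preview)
Your proof is correct and uses essentially the same ingredients as the paper: the vanishing of $\tilde{H}_i(\Delta(\mc{H})_V;k)$ for $i\le d-3$ (yielding the bound $\mr{pd}_R(R/I_{\Delta(\mc{H})})\le n-d+1$ via Hochster), together with the connectivity--depth relation. The paper argues the forward direction slightly differently---starting from the a priori lower bound $\mr{pd}\ge n-d+1$ established in the preceding corollary and then ruling out $\mr{pd}>n-d+1$ by the same homology vanishing---and invokes Corollary~\ref{baba} rather than Theorem~\ref{conn/depth} directly for the linear-strand assertion; but these are organizational differences, not substantive ones.
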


\begin{proof}
We know that $n-d+1\le\mr{pd}_R(R/I_{\Delta(\mc{H})})\le n $. Put $\mr{pd}_R(R/I_{\Delta(\mc{H})})=n-r$, 
$0\le r\le d-1$. Hochster's formula gives
\[
\beta_{n-r}(R/I_{\Delta(\mc{H})})=\sum_{V\sse\mc{X(H)}}\dim_k\tilde{H}_{|V|-(n-r)-1}
(\Delta(\mc{H}_V);k).
\]
If $r\le d-2$ we get that $|V|-(n-r)-1\le |V|-n+d-3\le d-3$. But 
$\tilde{H}_{l}(\Delta(\mc{H}_V);k)=0$ for all $l\le d-3$ and for all $V\sse\mc{X(H)}$.\\

The last claim follows from Corollary \ref{baba}, since being not homologically connected, is the same 
thing as having connectivity 0.
\end{proof}

\begin{exam}
Since homologically connected and connected are the same things for an ordinary 
simple graph $\mc{G}$, we have $\mr{pd}_R(R/I_{\Delta(\mc{G}}))=n-1$ and $\mr{depth}_R(R/I_{\Delta(\mc{G})})=1$ 
for any simple graph $\mc{G}$ that is not connected. Furthermore, the length of the linear strand of 
$R/I_{\Delta_{\mc{G}}}$ is maximal. This special case of Theorem \ref{homconn} is Theorem 4.2.6 in \cite{Ja}.
\end{exam}

\begin{cor}\label{CM}
Let $\mc{H}$ be a $d$-uniform hypergraph. If $R/I_{\Delta(\mc{H})}$ is Cohen--Macaulay of dimension at 
least $d$, then $\mc{H}$ has non zero connectivity. Put another way, the only $d$-uniform hypergraph 
$\mc{H}$ with connectivity 0 such that $R/I_{\Delta(\mc{H})}$ is Cohen--Macaulay, is the discrete hypergraph.
\end{cor}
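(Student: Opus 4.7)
The proof is essentially a one-line deduction from Theorem \ref{homconn}, so the plan is just to unpack that application carefully. First I would argue by contradiction: assume $\mc{H}$ has connectivity $0$ over $k$, i.e., is not homologically connected. Theorem \ref{homconn} then forces $\mr{depth}_R(R/I_{\Delta(\mc{H})}) = d-1$. The Cohen--Macaulay hypothesis says the Krull dimension of $R/I_{\Delta(\mc{H})}$ equals its depth, so $\dim R/I_{\Delta(\mc{H})} = d-1$, which directly contradicts the assumption $\dim R/I_{\Delta(\mc{H})} \geq d$. This establishes the first formulation.

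For the reformulation, I would verify that a $d$-uniform hypergraph with no edges has $\dim R/I_{\Delta(\mc{H})} = d-1$, whereas every hypergraph carrying at least one edge has $\dim R/I_{\Delta(\mc{H})} \geq d$. For the latter, simply observe that any edge $E \in \mc{E}(\mc{H})$ satisfies $\gf E,d = \{E\} \sse \mc{E}(\mc{H})$, so $E$ itself is a face of $\Delta(\mc{H})$ of dimension $d-1$; combining this with the standard identity $\dim R/I_{\Delta} = \dim \Delta + 1$ gives $\dim R/I_{\Delta(\mc{H})} \geq d$. For the former, with no edges present $\Delta(\mc{H})$ coincides with the $(d-2)$-skeleton of the full simplex on $[n]$, of dimension $d-2$, so $\dim R/I_{\Delta(\mc{H})} = d-1$. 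Combining these two observations with the contrapositive of the first statement yields the desired rephrasing: if $\mc{H}$ has connectivity $0$ and is Cohen--Macaulay, then $\dim R/I_{\Delta(\mc{H})}$ is forced to be at most $d-1$, hence exactly $d-1$, which forces $\mc{H}$ to be edge-free.

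I expect no real obstacle here: the corollary is a direct logical consequence of Theorem \ref{homconn} together with the equality $\dim = \mr{depth}$ for Cohen--Macaulay modules. The only small bookkeeping needed is the two dimension counts that translate the condition ``$\dim R/I_{\Delta(\mc{H})} \geq d$'' into ``$\mc{H}$ has at least one edge'', and both follow immediately from the definition of the clique complex $\Delta(\mc{H})$.
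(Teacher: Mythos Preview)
Your proposal is correct and follows exactly the intended route: the paper states this result as an immediate corollary of Theorem \ref{homconn} without giving any further argument, and you have simply spelled out the contrapositive and the two easy dimension computations (edge present $\Rightarrow \dim\ge d$; no edges $\Rightarrow \Delta(\mc{H})$ is the $(d-2)$-skeleton, $\dim=d-1$) that the paper leaves implicit.
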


\begin{exam}
In \cite{E1} we considered several kinds of complete hypergraphs. For example the 
$d$-complete multipartite hypergraph $K_{n_1,\ldots,n_t}^d$. This is the hypergraph on vertex set 
$\mc{X}=[n_1]\sqcup\cdots\sqcup [n_t]$, where $\sqcup$ denotes disjoint union, and edge set consisting 
of every $d$-set of $\mc{X}$ (that is, every subset of $\mc{X}$ of cardinality $d$) that does not lie 
entirely inside one of the $[n_i]$'s. This is a natural generalization of the usual complete 
mulitpartite graph $K_{n_1,\ldots,n_t}$. Precisely as $(K_{n_1,\ldots,n_t})^c$, the complement 
$(K_{n_1,\ldots,n_t}^d)^c$ of the $d$-complete multipartite hypergraph is not homologically connected.\\
In Propositions 3.9 and 3.20 in \cite{E1}, we determined when a couple of 
such complete hypergraphs in addition to having linear resolutions also has the Cohen--Macaulay property. The 
conclusion there is that the only case in which this happens, is in the extremal case when the considered 
hypergraph in fact is isomorphic to a $d$-complete hypergraph. This fact now follows immediately from the above 
corollary, since it is easily seen (by computing the Betti numbers) that the considered hypergraphs are not 
homologically connected.
\end{exam}

\begin{prop}
Let $\mc{H}$ be a $d$-uniform hypergraph on vertex set $[n]$. Then the Betti number 
$\beta_{n-d+1}(R/I_{\Delta(\mc{H})})$ can be non zero only in degree $n$. Furthermore, it determines whether 
$\mc{H}$ has non zero connectivity or not. 
\end{prop}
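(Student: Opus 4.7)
The plan is to apply Hochster's formula directly to $\beta_{n-d+1,j}(R/I_{\Delta(\mc{H})})$ and exploit the vanishing of low-dimensional homology of clique complexes of $d$-uniform hypergraphs that was already recorded in the proof of an earlier proposition, namely that $\tilde{H}_l(\Delta(\mc{H}_V);k)=0$ for every $V\sse[n]$ whenever $l\le d-3$.

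First I would write out Hochster's formula in the form
\[
\beta_{n-d+1,j}(R/I_{\Delta(\mc{H})})=\sum_{\substack{V\sse[n]\\|V|=j}}\dim_k\tilde{H}_{|V|-(n-d+1)-1}(\Delta(\mc{H})_V;k),
\]
and use the identification $(\Delta(\mc{H}))_V=\Delta(\mc{H}_V)$. A summand can therefore be non zero only when $|V|-(n-d+1)-1\ge d-2$, i.e. $|V|\ge n$. Since $V\sse[n]$, this forces $V=[n]$ and $j=n$, which gives the first assertion.

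For the second assertion, I would note that setting $j=n$ (and so $V=[n]$) in the same formula yields
\[
\beta_{n-d+1,n}(R/I_{\Delta(\mc{H})})=\dim_k\tilde{H}_{d-2}(\Delta(\mc{H});k).
\]
On the other hand, directly from the definition of $\mr{con}(\mc{H})$, the choice $V=\emptyset$ in that definition shows that $\mc{H}$ has connectivity $0$ precisely when $\tilde{H}_{d-2}(\Delta(\mc{H});k)\neq 0$. Combining these two observations with the first part, $\beta_{n-d+1}(R/I_{\Delta(\mc{H})})\neq 0$ iff $\beta_{n-d+1,n}\neq 0$ iff $\tilde{H}_{d-2}(\Delta(\mc{H});k)\neq 0$ iff $\mr{con}(\mc{H})=0$, which is the desired statement.

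There is really no significant obstacle here: the whole argument is a bookkeeping of the homological index shift in Hochster's formula together with the already established vanishing range for the reduced homology of $\Delta(\mc{H}_V)$. The only point to be a little careful about is to remember that for the displayed Betti number to be non-zero in \emph{some} multidegree of total degree $n$, the only candidate multidegree is the fully supported one corresponding to $V=[n]$, so the total Betti number $\beta_{n-d+1}$ coincides with the single graded Betti number $\beta_{n-d+1,n}$.
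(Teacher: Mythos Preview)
Your proof is correct and follows essentially the same route as the paper's. The paper's one-line proof invokes Hochster's formula together with the fact that $\beta_{i,j}(R/I_{\Delta(\mc{H})})=0$ for $j<i+d-1$; this degree bound is exactly equivalent, via Hochster, to the vanishing $\tilde{H}_l(\Delta(\mc{H}_V);k)=0$ for $l\le d-3$ that you use, so your argument is simply a more explicit unpacking of the same idea, including the verification of the second assertion which the paper leaves implicit.
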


\begin{proof}
This follows from Hochster's formula and the fact that $\beta_{i,j}(R/I_{\Delta(\mc{H})})=0$ if $j<i+d-1$. 
\end{proof}

\begin{rem}
In case of ordinary simple graphs, by the above proposition, the number 
$\beta_{n-1}(R/I_{\Delta(\mc{G})})+1$ is the number of connected components of $\mc{G}$.
\end{rem}

\begin{exam}
Let $\mc{H}$ be the 3-uniform hypergraph on vertex set $\{a,b,c,d\}$ and with edge set 
$\mc{E(H)}=\{abc,bcd\}$ (we let $xyz$ denote the edge $\{x,y,z\}$). We may visualize $\mc{H}$ as follows:
\[
\xymatrix{&b\ar@{-}[dl]\ar@{-}[dd]\ar@{-}[dr]&\\
a\ar@{-}[dr]&&d\ar@{-}[dl]\\
&c&}
\]
By computing the Betti numbers of $R/I_{\Delta(\mc{H})}$ using some suitable computer program, one sees 
that $\beta_{2}(R/I_{\Delta(\mc{H})})=1$. If we add to the edge set the edge $abd$, the resulting hypergraph 
has non zero connectivity.
\end{exam}

\begin{exam}
Let $\mc{H}$ be the 3-uniform hypergraph on vertex set $\{a,b,c,d,e\}$ and with edge set 
$\mc{E(H)}=\{abc,cde\}$. $\mc{H}$ is illustrated below:
\[
\xymatrix{
a\ar@{-}[dd]\ar@{-}[dr]&&d\ar@{-}[dd]\ar@{-}[dl]\\
&c&\\
b\ar@{-}[ur]&&e\ar@{-}[ul]}
\]
The Betti number $\beta_3(R/I_{\Delta(\mc{H})})=4$ shows that $\mc{H}$ has 0 connectivity. If we add to the edge 
set the edge $acd$, the resulting hypergraph has 4 homologically connected components. If we continue and add the 
edge $bce$, the resulting hypergraph still has 0 connectivity. Adding the edge $abd$ does not create a hypergraph 
with non zero connectivity, but, finally, by adding the edge $abe$ we arrive at a hypergraph with edge set 
$\{abc,cde,acd,bce,abd,abe\}$, that has non zero connectivity.
\end{exam}

\begin{exam}
Let $\mc{H}=(K_n^d)^c$. Then we know that (Theorem 3.1, \cite{E1}) 
$\beta_{i,j}(R/I_{\Delta(\mc{H})})\\=\gf{n},{j}\gf{j-1},{d-1}$. Hence $\mc{H}$ certainly does not have 
non zero connectivity. This is quite natural since it generalizes the fact that the discrete graph on $n$ vertices 
has $n=\gf{n-1},{2-1}+1$ (homologically) connected components.
\end{exam}

\begin{exam}
One of the complete hypergraphs considered in \cite{E1} is the $d(a_1,\ldots,a_t)$-complete 
hypergraph $K_{n_1,\ldots,n_t}^{d(a_1,\ldots,a_t)}$. This hypergraph has vertex set the disjoint union 
$\mc{X}=[n_1]\sqcup\cdots\sqcup [n_t]$ and edge set consisting of all $d$-sets of $\mc{X}$ such that precisely 
$a_i$ elements comes from $[n_i]$. In \cite{E1} it is shown that $R/I(K_{n_1,\ldots,n_t}^{d(a_1,\ldots,a_t)})$ has linear 
resolution and projective dimension $n-d+1$. Hence $(K_{n_1,\ldots,n_t}^{d(a_1,\ldots,a_t)})^c$ has 
connectivity 0.
\end{exam}

\begin{rem}
The above examples shows that the connectivity of an arbitrary $d$-uniform hypergraph, often is 0. This together 
with Corollary \ref{CM} show that $d$-uniform hypergraphs $\mc{H}$ such that $R/I_{\Delta(\mc{H})}$ 
is Cohen--Macualay, indeed are very special.
\end{rem}

In Example 5.17 and Example 5.18, we have $\dim\Delta(\mc{H})=2$. It is easy to see that if $\mc{H}$ is a 
$d$-uniform hypergraph such that $\dim\Delta(\mc{H})=d-1\ge1$, then the converse of Corollary \ref{CM} holds. 
That is, if $\dim\Delta(\mc{H})=d-1\ge1$, then $R/I_{\Delta(\mc{H}}$ is Cohen--Macaulay over $k$ precisely when 
$\mc{H}$ has non zero connectivity over $k$. This will follow from the following result (Lemma 7 in \cite{F2}) 
of Fr\"oberg.

\begin{lem}\label{Froberg}
Let $R/I$ be a Stanley--Reisner ring with $\dim R/I=e$ and embedding dimension $n$. Then $R/I$ is Cohen--Macaulay 
if and only if $\tilde{H}_{i}(\Delta_V;k)=0$ for every $i$ and $V\sse [n]$ such that $|V|=n-e+i+2$.
\end{lem}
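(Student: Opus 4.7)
The plan is to translate the homological vanishing hypothesis into a single Betti-number vanishing statement via Hochster's formula, and then convert that into a depth statement via the Auslander--Buchsbaum formula. Both tools have been set up in the excerpt, so the argument is essentially bookkeeping with indices.

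First I would unwind the hypothesis. Hochster's formula, in its $\mathbb{N}^n$-graded form, gives
\[
\beta_{j,V}(R/I)=\dim_k\tilde{H}_{|V|-j-1}(\Delta_V;k).
\]
Substituting $|V|=n-e+i+2$ into the relation $|V|-j-1=i$ forces $j=n-e+1$. Thus the hypothesis "$\tilde{H}_i(\Delta_V;k)=0$ for every $i$ and every $V\sse[n]$ with $|V|=n-e+i+2$" is exactly the statement that $\beta_{n-e+1,V}(R/I)=0$ for every $V\sse[n]$, and hence, by summing over $V$, is equivalent to the vanishing of the total Betti number $\beta_{n-e+1}(R/I)$. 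Subsets with $|V|<n-e+1$ would correspond via Hochster's formula to reduced homology in degree $<-1$, which is automatically zero, so they impose no additional constraint.

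Next, the vanishing of $\beta_{n-e+1}(R/I)$ is equivalent to $\mr{pd}_R(R/I)\leq n-e$, since a zero total Betti number at some homological position forces the minimal free resolution to terminate strictly before that position. By the Auslander--Buchsbaum formula recalled just above the lemma, $\mr{pd}_R(R/I)+\mr{depth}_R(R/I)=n$, so this is equivalent to $\mr{depth}_R(R/I)\geq e$. Since one always has $\mr{depth}_R(R/I)\leq\dim R/I=e$, this in turn is equivalent to $\mr{depth}_R(R/I)=e$, which is by definition the Cohen--Macaulay property.

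The main point to handle carefully will be the index arithmetic in the first step: one must verify that the single homological position $j=n-e+1$ captures exactly the family of pairs $(i,V)$ prescribed by the lemma, and that the subsets $V$ of smaller cardinality (which are omitted from the hypothesis) produce conditions that are vacuous. Beyond that, the proof is a direct chain of equivalences.
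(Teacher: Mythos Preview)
Your argument is correct. The paper itself does not supply a proof of this lemma; it is quoted as Lemma~7 from Fr\"oberg's paper \cite{F2} and simply used. So there is no ``paper's own proof'' to compare against, but your route---Hochster's formula to reduce the vanishing hypothesis to $\beta_{n-e+1}(R/I)=0$, then Auslander--Buchsbaum to convert that into $\mr{depth}_R(R/I)\ge e$---is exactly the natural one and is sound.

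One small remark on the step you flagged as the delicate one: you assert that $\beta_{n-e+1}(R/I)=0$ forces $\mr{pd}_R(R/I)\le n-e$. This is true but deserves a word of justification, since for general rings a single vanishing $\mr{Tor}$ need not terminate the resolution. Here it holds because $R$ is a polynomial ring: in a minimal free resolution over $R$, the vanishing $F_{n-e+1}=0$ means the $(n-e)$-th syzygy module is zero, so all later $F_j$ vanish as well. Alternatively one can invoke rigidity of $\mr{Tor}$ over regular rings. Either way the implication goes through, and with it your chain of equivalences.
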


\begin{rem}
An easy way to think of this lemma is as follows: First note that $|V|=n-e+i+2$ gives $i=0,\ldots,e-2$. The 
claim in the lemma is now ``symmetric'' relative to this sequence of indexes. $\tilde{H}_{0}(\Delta_V;k)$ should 
be zero for $|V|=n-(e-2)$, $\tilde{H}_{1}(\Delta_V;k)$ should be zero whenever $|V|=n-(e-2)-1$, 
$\tilde{H}_{2}(\Delta_V;k)$ should be zero whenever $|V|=n-(e-2)-2$, a.s.o.
\end{rem}

If $\Delta=\Delta(\mc{H})$, the lemma of Fr\"oberg gives us the following, which is completely analogous to that 
considered right after Lemma 7 in \cite{F2}. If $\dim\Delta(\mc{H})=d-2$, the complex is always Cohen--Macaulay. 
This follows since the claim in the lemma in this case is that reduced homology in degree -1 is zero (we consider 
non empty complexes). The claim could also be easily verified by noting that $\Delta(\mc{H})$ in this case is the 
independence complex $\Delta_{K_n^d}$ of some $d$-complete hypergraph, see \cite{E1}, Corollary 3.2. 
Assume $\dim\Delta(\mc{H})=d-1$. In this case the condition in the lemma is that $\mc{H}$ be homologically connected 
(i.e. that $\tilde{H}_{d-2}(\Delta(\mc{H});k)=0$). If $\dim\Delta(\mc{H})=d$ the condition in the lemma says that $\tilde{H}_{d-1}(\Delta(\mc{H});k)=0$ and that $\tilde{H}_{d-2}(\Delta(\mc{H}_V);k)=0$ for every $V\sse [n]$ with 
$|V|=n-1$.\\

Consider Lemma \ref{Froberg} for a complex $\Delta(\mc{H})$ with linear resolution. Since induced complexes 
$(\Delta(\mc{H}))_V=\Delta(\mc{H}_V)$ can only have homology in degree $d-2$, one gets:

\begin{cor}
Let $\mc{H}$ be a $d$-uniform hypergraphs such that $\dim(R/I_{\Delta(\mc{H})})=e$ and 
$R/I_{\Delta(\mc{H})}$ has linear resolution. Then it is also Cohen--Macaulay if 
and only if $\tilde{H}_{d-2}(\Delta(\mc{H}_V);k)=0$ for every $V\sse [n]$ with $|V|=n-(e-d)$. Furthermore, 
in this case we have that
\[
e=\mr{con}(\mc{H})+d-1.
\]
\end{cor}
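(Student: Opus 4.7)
The plan is to apply Fröberg's Lemma \ref{Froberg}, observing that the linear-resolution hypothesis collapses its list of vanishing conditions to a single one. Indeed, by the multigraded form of Hochster's formula one has $\beta_{i,V}(R/I_{\Delta(\mc{H})}) = \dim_k \tilde{H}_{|V|-i-1}(\Delta(\mc{H}_V);k)$ for every squarefree multidegree $V \sse [n]$; if the resolution is linear, $\beta_{i,V} = 0$ whenever $|V| \neq i + d - 1$, so for each $V$ and each $l \neq d - 2$ we deduce $\tilde{H}_l(\Delta(\mc{H}_V); k) = 0$.

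With this in hand, the biconditional is immediate. Fröberg's Lemma requires $\tilde{H}_i(\Delta_V; k) = 0$ for every $i$ and every $V$ with $|V| = n - e + i + 2$. All such conditions with $i \neq d - 2$ are automatic under our hypothesis, leaving only the case $i = d - 2$, which forces $|V| = n - (e - d)$. Hence $R/I_{\Delta(\mc{H})}$ is Cohen--Macaulay precisely when $\tilde{H}_{d-2}(\Delta(\mc{H}_V); k) = 0$ for all $V \sse [n]$ with $|V| = n - (e - d)$.

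For the equality $e = \mr{con}(\mc{H}) + d - 1$ I would invoke Corollary \ref{baba}. Since the resolution is linear it coincides with its own linear strand, so in the notation of Theorem \ref{conn/depth} we have $r = 0$ and the linear strand has maximal length. Corollary \ref{baba} then yields $\mr{depth}_R(R/I_{\Delta(\mc{H})}) = \mr{con}(\mc{H}) + d - 1$, and since $R/I_{\Delta(\mc{H})}$ is Cohen--Macaulay its depth equals its Krull dimension $e$, producing the asserted formula. The only delicate point is the initial observation that the homology of $\Delta(\mc{H}_V)$ vanishes outside degree $d - 2$ for \emph{each} individual $V$ (not merely in aggregate); once this is in place, the remainder is a direct application of earlier results.
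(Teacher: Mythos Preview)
Your proof is correct and follows essentially the same approach as the paper. The paper's argument is the single sentence preceding the corollary: since induced complexes $(\Delta(\mc{H}))_V=\Delta(\mc{H}_V)$ can only have homology in degree $d-2$ (which you justify via Hochster's formula and the linear-resolution hypothesis), Fr\"oberg's Lemma reduces to the single condition at $i=d-2$; the equality $e=\mr{con}(\mc{H})+d-1$ then comes from Corollary~\ref{baba} together with depth $=$ dimension, exactly as you argue.
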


\section{$d$-shellability}\label{d-shellability}

Pure shellable simplicial complexes is somewhat of a cornerstone of combinatorial commutative algebra. This is 
perhaps mostly since in some situations they provide a nice non-technical (not always an easy though) way of 
showing that a complex $\Delta$ is Cohen--Macaulay. Also, the concept has many times been succesfully used to prove, 
via Alexander duality, that certain rings have linear resolutions, indeed, even linear quotients.\\

We start by recalling the definition of shellability, pure and non-pure. We use the following notation: Given 
a finite collection $\{F_1,\ldots,F_t\}$ of non empty subsets of $[n]$, we denote by 
$\langle F_1,\ldots,F_t\rangle$ the simplicial complex with $\mc{F}(\Delta)=\{F_1,\ldots,F_t\}$.

\begin{defn}
Let $\Delta$ be a simplicial complex on $[n]$ with $\mc{F}(\Delta)=\{F_1,\ldots,F_t\}$. $\Delta$ is called 
pure shellable if 
\begin{itemize}
\item[$(i)$]{
$|F_i|=|F_j|$ for every pair of indices $1\le i<j\le t$.}
\item[$(ii)$]{
There exists an ordering $F_1,\ldots,F_t$ of the facets such that 
$\langle F_j\rangle\cap\langle F_1,\ldots F_{j-1}\rangle$ is generated by a non-empty set of proper maximal faces 
of $\langle F_j\rangle$ for every $j=2,\ldots,t$.
}
\end{itemize}
\end{defn}

A simplicial complex $\Delta$ is by definition called non-pure shellable if $(ii)$ but not necessarily $(i)$, 
holds in the above definition. 

Henceforth unless otherwise is stated, by shellable we mean shellable in the non-pure sense.

\begin{rem}
It is well known (see for example \cite{BH}, Theorem 5.1.13) that pure shellability implies Cohen--Macaulayness. This 
follows from Corollary \ref{lin.quot} below and the {\it Eagon-Reiner Theorem} (\cite{ER}, Theorem 3).
\end{rem}

In the following two definitions we introduce the concepts of $d$-shellability and 
$d$-quotients. .

\begin{defn}\label{d-shellable}
Let $\Delta$ be a simplicial complex on $[n]$. $\Delta$ is called $d$-shellable if its facets can be ordered 
$F_1,\ldots,F_t$, such that $\langle F_j\rangle\cap\langle F_1,\ldots,F_{j-1}\rangle$ is generated by a non-empty 
set of proper faces of $\langle F_j\rangle$ of dimension $|F_j|-d-1$ for every $j=2,\ldots,t$.
\end{defn}

\begin{rem}
The concepts of being 1-shellable and shellable coincides. If $\Delta$ is a simplicial complex, a linear ordering of 
$\mc{F}(\Delta)$ satisfying the conditions of Definition \ref{d-shellable} is called a {\bf $d$-shelling} of $\Delta$.
\end{rem}

\begin{defn}
Let $I$ be a monomial ideal. We say that $I$ has $d$-quotients if there exists an ordering 
$x^{m_1}\le\cdots\le x^{m_t}$ of the minimal generators of $I$, such that if we for 
$s=1,\ldots,t$, put $I_s=(x^{m_1},\ldots,x^{m_s})$, then for every $s$ there are monomials 
$x^{b_{s_i}}$, $i=1,\ldots,r_s$, $\deg x^{b_{s_i}}=d$ for all $i$, such that
\[
I_{s-1}:x^{m_s}=(x^{b_{s_1}},\ldots,x^{b_{s_{r_s}}}).
\]
\end{defn}

The motivation behind these definitions is the following well known theorem, which we 
generalize below.

\begin{thm}
Let $I=(x^{m_1},\ldots,x^{m_t})$ be a squarefree monomial ideal. Then $I$ has linear quotients 
(that is, 1-quotients) precisely when the Alexander dual ideal, $I^\ast$, is shellable. In particular, 
if $I^\ast$ is shellable and $\deg x^{m_i}=\deg x^{m_j}$ for every pair $i,j$ of indices, then $I^\ast$ 
is Cohen--Macaulay.
\end{thm}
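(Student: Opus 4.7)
The plan is to set up the standard Alexander-dual dictionary between minimal generators of $I$ and facets of $\Delta^\ast$, and show that the linear-quotients condition on the former translates directly into the shelling condition on the latter. Write $I=I_\Delta$, so $I^\ast=I_{\Delta^\ast}$. Each minimal generator $x^{\sigma_i}$ of $I$ corresponds to a minimal nonface $\sigma_i$ of $\Delta$, and $F_i:=[n]\ssm \sigma_i$ is a facet of $\Delta^\ast$; this exhausts $\mc{F}(\Delta^\ast)$. Thus orderings $x^{\sigma_1},\ldots,x^{\sigma_t}$ of $\mc{G}(I)$ are in bijection with orderings $F_1,\ldots,F_t$ of $\mc{F}(\Delta^\ast)$.

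The central (trivial) identity underlying the translation is $\sigma_r\ssm\sigma_s=F_s\ssm F_r$. First I would compute, for a fixed $s$, the colon ideal
\[
(x^{\sigma_1},\ldots,x^{\sigma_{s-1}}):x^{\sigma_s}=\bigl(x^{\sigma_r\ssm \sigma_s}\,;\,r<s\bigr).
\]
This ideal is generated by variables (i.e.\ has $1$-quotients) iff for every $r<s$ the monomial $x^{\sigma_r\ssm\sigma_s}$ is divisible by some variable already in the list, that is, iff there exists $r'<s$ and $j\in\sigma_r\ssm\sigma_s$ with $\sigma_{r'}\ssm\sigma_s=\{j\}$.

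Applying the identity $\sigma_r\ssm\sigma_s=F_s\ssm F_r$, the above condition reads: for every $r<s$ there exist $r'<s$ and $j\in F_s\ssm F_r$ with $F_s\ssm F_{r'}=\{j\}$. This is precisely the shelling criterion for the ordering $F_1,\ldots,F_t$ of $\mc{F}(\Delta^\ast)$: the intersection $\langle F_s\rangle\cap\langle F_1,\ldots,F_{s-1}\rangle$ is generated by the maximal proper faces $F_s\ssm\{j\}$ of $F_s$ with $F_s\ssm F_{r'}=\{j\}$ for some $r'<s$, and every $F_s\cap F_r$ ($r<s$) is contained in such a face. Running the argument in both directions establishes the equivalence between linear quotients of $I$ and shellability of $\Delta^\ast$.

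For the second assertion, if all $\deg x^{\sigma_i}$ coincide, say $|\sigma_i|=d$ for all $i$, then all facets $F_i=[n]\ssm\sigma_i$ have the common cardinality $n-d$, so $\Delta^\ast$ is pure and the shelling above is a pure shelling. The classical fact that pure shellable complexes are Cohen--Macaulay then gives that $R/I^\ast$ is Cohen--Macaulay. The only subtle point in this outline is verifying that the minimal-generator description of the colon ideal really matches the maximal-proper-face description in the shelling condition, but the identity $\sigma_r\ssm\sigma_s=F_s\ssm F_r$ makes the translation purely mechanical, so no genuine obstacle arises.
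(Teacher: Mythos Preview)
Your argument is correct and is essentially the same Alexander-dual dictionary the paper uses: the paper does not give a separate proof of this statement (it is presented as well known), but proves the more general Theorem~\ref{d-quot/d-shell} by exactly your translation $F_i=[n]\ssm m_i$ and the identification of the generators of $I_{j-1}:x^{m_j}$ with the faces generating $\langle F_j\rangle\cap\langle F_1,\ldots,F_{j-1}\rangle$, of which your proof is the $d=1$ specialization. Your treatment of the Cohen--Macaulay clause via purity and the classical ``pure shellable $\Rightarrow$ Cohen--Macaulay'' fact also matches the paper's remark.
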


\begin{exam}
The clique complexes of line hypergraphs $L_{n}^{d,\alpha}$ and of
hypercycles $C_n^{d,\alpha}$ are both $(d-\alpha)$-shellable. 
\end{exam}

\begin{thm}\label{d-quot/d-shell}
Let $I$ be a squarefree monomial ideal. Then $I$ has $d$-quotients precisely when the Alexander dual ideal, 
$I^\ast$, is $d$-shellable.
\end{thm}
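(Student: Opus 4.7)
The plan is to translate both sides of the statement directly into the same combinatorial data. Write the minimal monomial generators of $I$ as $x^{F_1},\ldots,x^{F_t}$ with each $F_s\sse[n]$ squarefree, and put $G_s=[n]\ssm F_s$. Then the facets of the Alexander dual complex $\Delta^*$ (of which $I^*$ is the Stanley--Reisner ideal) are exactly $G_1,\ldots,G_t$, so an ordering of the minimal generators of $I$ is the same datum as an ordering of the facets of $\Delta^*$.

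Next I would fix $s\ge 2$ and compute, on one side,
\[
I_{s-1}:x^{F_s}=(x^{F_j\ssm F_s}\,;\,j<s),
\]
whose minimal monomial generators are precisely those $x^{F_j\ssm F_s}$ for which $F_j\ssm F_s$ is inclusion-minimal in $\{F_{j'}\ssm F_s\,;\,j'<s\}$. On the other side, a subset $H\sse G_s$ lies in $\langle G_s\rangle\cap\langle G_1,\ldots,G_{s-1}\rangle$ iff $H\sse G_s\cap G_j$ for some $j<s$, so the facets of this intersection complex are the inclusion-maximal members of $\{G_s\cap G_j\,;\,j<s\}$. Using $G_s\cap G_j=G_s\ssm(F_j\ssm F_s)$, the bijection $F_j\ssm F_s\longleftrightarrow G_s\cap G_j$ identifies inclusion-minimal difference sets with inclusion-maximal intersections, and sends the degree of the monomial generator to the codimension of the corresponding facet in $G_s$ via $|F_j\ssm F_s|=|G_s|-|G_s\cap G_j|$.

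Finally, the $d$-quotients condition says that for every $s$ every minimal generator of $I_{s-1}:x^{F_s}$ has degree exactly $d$, while the $d$-shellability condition says that every facet of $\langle G_s\rangle\cap\langle G_1,\ldots,G_{s-1}\rangle$ has dimension $|G_s|-d-1$, equivalently cardinality $|G_s|-d$. Under the correspondence above, the two requirements are the single combinatorial condition that for every $s\ge2$ and every inclusion-minimal $F_j\ssm F_s$ with $j<s$ we have $|F_j\ssm F_s|=d$. The one point that needs a short justification is that ``$I_{s-1}:x^{F_s}$ is generated by monomials of degree $d$'' forces every minimal generator of the colon ideal to have degree exactly $d$; but in a monomial ideal any generating set must dominate each minimal generator by divisibility, and a monomial of degree less than $d$ cannot be divisible by a degree-$d$ monomial, so this is automatic. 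With this in hand the equivalence of the two conditions gives the theorem.
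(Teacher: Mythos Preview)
Your proposal is correct and follows essentially the same route as the paper. Both arguments set up the dictionary $F_s\leftrightarrow G_s=[n]\ssm F_s$ between minimal generators of $I$ and facets of $\Delta^\ast$, observe that the minimal generators of $I_{s-1}:x^{F_s}$ correspond under complementation in $G_s$ to the facets of $\langle G_s\rangle\cap\langle G_1,\ldots,G_{s-1}\rangle$, and read off that ``all minimal generators have degree $d$'' is the same condition as ``all facets of the intersection have cardinality $|G_s|-d$''. The paper itself calls the argument ``almost tautological'' and is somewhat terser; your write-up makes the order-reversing bijection $F_j\ssm F_s\longleftrightarrow G_s\cap G_j$ and the cardinality bookkeeping more explicit, which is a small expository gain but not a different idea.
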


\begin{proof}
Let $I=(x^{m_1},\ldots,x^{m_t})$, where the $x^{m_i}$ are the minimal generators. By definition, it is clear that 
the set of facets of $\Delta^\ast$, the Stanley--Reisner complex of $I^\ast$, is $\mc{F}(\Delta^\ast)=\{F_1,\ldots,F_t\}$, 
where $F_i=[n]\ssm m_i$ for $i=1,\ldots,t$. With the notation clear, the proof is almost tautological.

Assume $I$ has $d$-quotients.  If for every $1\le i<j\le t$, $x^{a_{j_i}}$ denotes the minimal generator of $(x^{m_i}):x^{m_j}$, 
then (possibly after re-indexing) $I_{j-1}:x^{m_j}$ is minimally generated by the set $x^{a_{j_1}},\ldots,x^{a_{j_r}}$, 
for some $r\le j-1$. This is equivalent to saying that the sets $a_{j_\alpha}$, $\alpha=1,\ldots,j_r$, that all have cardinality 
$d$ by assumption, are precisely the minimal subsets of $[n]$ such that $F_j\ssm a_{j_\alpha}\sse F_i$ for some $1\le i<j$, 
and that $\langle F_j\rangle\cap\langle F_1,\ldots,F_{j-1}\rangle$ is pure of dimension $|F_j|-d-1$ and equals 
$\langle F_j\ssm a_{j_1},\ldots,F_j\ssm a_{j_r}\rangle$.

The converse is proved by a similar argument: Assume $\Delta^\ast$ is $d$-shellable, and let 
$\mc{F}(\Delta^\ast)=\{F_1,\ldots,F_t\}$. Put $m_i=[n]\ssm F_i$. Then the Alexander dual ideal $I$ of $I^\ast$, is minimally 
generated by the monomials $x^{m_i}$, $i=1,\ldots,t$. For every $j=2,\ldots,t$, we let $a_{j_\alpha}$, $\alpha=1,\ldots,j_r$ 
denote the subsets of $F_j$ that one has to remove in order for $F_j\ssm a_{j_\alpha}$ to be a generator of 
$\langle F_j\rangle\cap\langle F_1,\ldots,F_{j-1}\rangle$. Then the monomials $x^{a_{j_\alpha}}$ are precisely the minimal 
generators of $I_{j-1}:x^{m_j}$. 
\end{proof}

The following theorem occurs frequently in the literature. It shows that simplicial complexes that are 1-shellable 
may be defined in (at least) three equivalent ways:

\begin{thm}
Let $\Delta$ be a simplicial complex on vertex set $[n]$, with $\mc{F}(\Delta)=\{F_1,\ldots,F_t\}$. Then the 
following conditions are equivalent:
\begin{itemize}
\item[$(i)$]
{$\Delta$ is shellable and $F_1,\ldots,F_t$ is a shelling.}
\item[$(ii)$]
{For all $i,j$, $1\le i<j\le t$, there exist a vertex $v$ and a $k$ with $1\le k<j$, such that 
$v\in F_j\ssm F_i$ and $F_j\ssm F_k=\{v\}$.}
\item[$(iii)$]
{The set $\{F\in [n]\,|\,F\in\langle F_1,\ldots,F_j\rangle\,,\,F\not\in\langle F_1,\ldots,F_{j-1}\rangle\}$ has a unique 
minimal element for all $2\le i\le t$.}
\end{itemize}

\end{thm}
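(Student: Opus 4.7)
My plan is to prove $(i) \Leftrightarrow (ii)$ and $(i) \Leftrightarrow (iii)$ by extracting from the shelling condition a concrete description of the subcomplex $\langle F_j\rangle \cap \langle F_1,\ldots,F_{j-1}\rangle$ as a subcomplex of the simplex $\langle F_j\rangle$. The uniform observation underlying everything is that this intersection is generated by codimension one faces of $\langle F_j\rangle$ precisely when, for every $i<j$, the face $F_i\cap F_j$ sits inside some $F_j \setminus \{v\}$ which itself equals $F_j\cap F_k$ for some $k<j$, with $F_k \neq F_j$.

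For $(i)\Rightarrow(ii)$: fix $i<j$. Since $F_i\cap F_j$ is a face of the intersection, by shellability it lies inside some generator $F_j\setminus\{v\}$, so $v\in F_j\setminus F_i$. This generator is itself in the intersection, hence contained in some $F_k$ with $k<j$; thus $F_j\setminus F_k\subseteq\{v\}$, and since $F_j,F_k$ are distinct facets we cannot have $F_j\subseteq F_k$, giving $F_j\setminus F_k=\{v\}$. For $(ii)\Rightarrow(i)$: the intersection is generated by the faces $F_i\cap F_j$ with $i<j$, so it suffices to show each such is contained in a codimension one face of $\langle F_j\rangle$ lying in the intersection. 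Apply $(ii)$ to this $i,j$ to get $v$ and $k$; then $F_i\cap F_j\subseteq F_j\setminus\{v\}=F_j\cap F_k$, so $F_j\setminus\{v\}$ is a codimension one generator. Taking $i=j-1$ in $(ii)$ shows the generating set is non-empty.

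For $(i)\Leftrightarrow(iii)$: the set in $(iii)$ consists of faces $F\subseteq F_j$ with $F\not\subseteq F_i$ for every $i<j$. Equivalently, if we write the intersection as $\langle F_j\setminus\{v_1\},\ldots,F_j\setminus\{v_s\}\rangle$ in the shellable case, then $F\subseteq F_j$ is a face of the intersection iff $v_\ell\notin F$ for some $\ell$, so $F$ is a ``new'' face iff $\{v_1,\ldots,v_s\}\subseteq F$. Therefore shellability at step $j$ yields $\{v_1,\ldots,v_s\}$ as the unique minimal new face. Conversely, if the new faces have a unique minimum $m_j$, then a face $G\subseteq F_j$ lies in the intersection iff $G\not\supseteq m_j$, iff $G\subseteq F_j\setminus\{v\}$ for some $v\in m_j$, so $\langle F_j\rangle\cap\langle F_1,\ldots,F_{j-1}\rangle=\langle F_j\setminus\{v\}:v\in m_j\rangle$, which is generated by proper maximal faces of $\langle F_j\rangle$; since $F_j$ itself is new (as a facet, it is not a subset of any earlier $F_i$), the set is non-empty and $m_j\neq\emptyset$.

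The only subtle point, and the main thing to handle carefully, is the non-emptiness of the generating set of proper maximal faces (i.e. that $m_j\neq \emptyset$ and that the intersection is not the whole of $\langle F_j\rangle$), which in each direction follows from $F_j$ being a facet distinct from all previous $F_i$. Everything else is a direct translation between faces $F_j\setminus\{v\}$ of the simplex $\langle F_j\rangle$ and single vertices $v\in F_j$, and the three formulations $(i),(ii),(iii)$ amount to three ways of recording the same data of ``which vertices $v\in F_j$ can be removed so that the result embeds into an earlier facet''.
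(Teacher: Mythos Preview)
The paper does not give a proof of this theorem at all: it is stated as a result that ``occurs frequently in the literature'' and is included only to motivate the subsequent $d$-shellable analogue. So there is nothing to compare your argument against.

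That said, your proof is correct. The equivalence $(i)\Leftrightarrow(ii)$ is handled cleanly; the only slightly compressed step is in $(ii)\Rightarrow(i)$ where you write $F_j\setminus\{v\}=F_j\cap F_k$, but this is fine since $F_j\setminus F_k=\{v\}$ forces $v\notin F_k$ and hence $F_j\cap F_k=F_j\setminus\{v\}$. For $(i)\Leftrightarrow(iii)$ your identification of the unique minimal new face with $\{v_1,\ldots,v_s\}$ is the standard one, and your justification that $m_j\neq\emptyset$ is right (one should perhaps add explicitly that $\emptyset$ is always an old face for $j\ge 2$, which is why the unique minimum cannot be $\emptyset$; this is implicit in your remark that the generating set is non-empty). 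Overall the argument is complete and would serve as a self-contained proof where the paper simply defers to the literature.
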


Two of these statements, slightly modified, remain equivalent in the case of $d$-shellable complexes also for $d>1$.

\begin{thm}
Let $\Delta$ be a simplicial complex on vertex set $[n]$, with $\mc{F}(\Delta)=\{F_1,\ldots,F_t\}$. Then the 
following conditions are equivalent:
\begin{itemize}
\item[$(i)$]
{$\Delta$ is $d$-shellable and $F_1,\ldots,F_t$ a $d$-shelling.}
\item[$(ii)$]
{For all $i,j$, $1\le i<j\le t$, there exist some set $a_j\sse [n]$, $|a_j|=d$, and a $k$ with $1\le k<j$, such that 
$a_j\sse F_j$, $a_j\cap F_i=\emptyset$ and $F_j\ssm F_k=a_j$.}
\end{itemize}
\end{thm}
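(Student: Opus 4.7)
The plan is to translate the theorem into combinatorics of the sets $F_k\cap F_j$ for $k<j$. The key observation is that $\langle F_j\rangle\cap\langle F_1,\ldots,F_{j-1}\rangle=\bigcup_{k<j}\langle F_k\cap F_j\rangle$, so its facets are exactly the maximal elements of $\{F_k\cap F_j\,;\,k<j\}$ under inclusion. Under the complementation $a_j=F_j\ssm F_k$ inside $F_j$, the cardinality condition $|a_j|=d$ reads $|F_k\cap F_j|=|F_j|-d$, and the disjointness $a_j\cap F_i=\emptyset$ (for $a_j\sse F_j$) is equivalent to $F_i\cap F_j\sse F_k\cap F_j$. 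With this dictionary in hand, the two conditions become dual statements about facets of the intersection complex.

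For $(i)\Rightarrow(ii)$, I would fix $i<j$ and consider the face $F_i\cap F_j$, which lies in the intersection complex. By the $d$-shelling hypothesis it is contained in some facet, and that facet must be of the form $F_k\cap F_j$ for some $k<j$ with $|F_k\cap F_j|=|F_j|-d$. Setting $a_j:=F_j\ssm F_k$ then yields $|a_j|=d$, $a_j\sse F_j$, and $a_j\cap F_i=\emptyset$ (the latter because $F_i\cap F_j\sse F_k\cap F_j=F_j\ssm a_j$), as required.

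For $(ii)\Rightarrow(i)$, I would pick any maximal element $F_l\cap F_j$ of $\{F_k\cap F_j\,;\,k<j\}$ and apply $(ii)$ with $i=l$ to obtain a $d$-element set $a_j\sse F_j$ and an index $k<j$ with $a_j\cap F_l=\emptyset$ and $F_j\ssm F_k=a_j$. The disjointness gives $F_l\cap F_j\sse F_j\ssm a_j=F_k\cap F_j$, and maximality of $F_l\cap F_j$ then forces equality, so $|F_l\cap F_j|=|F_j|-d$. Thus every facet of $\langle F_j\rangle\cap\langle F_1,\ldots,F_{j-1}\rangle$ has dimension $|F_j|-d-1$; the intersection is non-empty for $j\ge 2$ since $(ii)$ supplies at least one valid $k$, and because $d\ge 1$ these facets are proper faces of $F_j$. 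This matches Definition \ref{d-shellable} exactly.

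The only real obstacle is bookkeeping: one must handle the duality between ``removing a $d$-subset $a_j$ of $F_j$'' and ``intersecting $F_j$ with some earlier $F_k$'' carefully, making sure that maximality of a facet in the intersection complex corresponds to the minimality implicit in the disjointness condition $a_j\cap F_i=\emptyset$. Once this correspondence is fixed, both implications are direct.
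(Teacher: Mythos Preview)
Your proposal is correct and follows essentially the same approach as the paper: both arguments rest on the identification $\langle F_j\rangle\cap\langle F_1,\ldots,F_{j-1}\rangle=\bigcup_{k<j}\langle F_k\cap F_j\rangle$ and the complementation $a_j\leftrightarrow F_j\ssm(F_k\cap F_j)$ inside $F_j$. The only minor differences are that the paper defers $(i)\Rightarrow(ii)$ to the proof of the preceding theorem on $d$-quotients (whereas you spell it out directly), and for $(ii)\Rightarrow(i)$ the paper takes an arbitrary face of the intersection complex and shows it lies in some $F_j\ssm a_j$, while you take a maximal $F_l\cap F_j$ and force it to equal such an $F_j\ssm a_j$; these are equivalent ways of verifying purity of dimension $|F_j|-d-1$.
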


\begin{proof}
The implication $(i)\Rightarrow (ii)$ follows by considering the proof of Theorem \ref{d-quot/d-shell}. For the 
converse let $F$ be a face of $\langle F_j\rangle\cap\langle F_1,\ldots,F_{j-1}\rangle$. Then $F$ lies in some 
$\langle F_i\rangle$, $i<j$. Let $a_j$ be a set that fits the description in $(ii)$. Then $F$ is also a face of 
$\langle F_j\ssm a_j\rangle$ so $\langle F_j\rangle\cap\langle F_1,\ldots,F_{j-1}\rangle$ is pure of dimension 
$|F_j|-d-1$. 
\end{proof}

As for shellable complexes, links of faces of $d$-shellable complexes stay $d$-shellable:

\begin{prop}
Let $\Delta$ be a $d$-shellable complex and $F$ a face of $\Delta$. Then $\mr{lk}_\Delta(F)$ is again 
$d$-shellable.
\end{prop}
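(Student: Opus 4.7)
The plan is to take a $d$-shelling $F_1,\ldots,F_t$ of $\Delta$, restrict it to the facets containing $F$, and show that the resulting ordering, after deleting $F$ from each facet, is a $d$-shelling of $\mr{lk}_\Delta(F)$. I will verify the condition $(ii)$ of the previous theorem (the ``immediate predecessor'' criterion), since that criterion is usually the cleanest to check against.

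First I would observe that the facets of $\mr{lk}_\Delta(F)$ are precisely the sets $G \ssm F$ with $G$ a facet of $\Delta$ containing $F$; this is because $F \sse G,G'$ and $G\ssm F\sse G'\ssm F$ forces $G\sse G'$, which forces $G=G'$. So let $F_{i_1},\ldots,F_{i_s}$ be the subsequence of $F_1,\ldots,F_t$ consisting of those facets that contain $F$, and set $G_p = F_{i_p}\ssm F$ for $p=1,\ldots,s$. I claim $G_1,\ldots,G_s$ is a $d$-shelling of $\mr{lk}_\Delta(F)$.

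To verify this, fix $1\le a<b\le s$. By the $d$-shelling of $\Delta$ applied to the pair $F_{i_a}<F_{i_b}$, there exist a set $a_b\sse [n]$ with $|a_b|=d$, and an index $\ell<i_b$, such that $a_b\sse F_{i_b}$, $a_b\cap F_{i_a}=\emptyset$, and $F_{i_b}\ssm F_\ell=a_b$. Since $F\sse F_{i_a}$ and $a_b\cap F_{i_a}=\emptyset$, the set $a_b$ is disjoint from $F$ and hence $a_b\sse G_b$ and $a_b\cap G_a=\emptyset$. The key point is that $F_\ell$ also contains $F$: indeed, $F_{i_b}\cap F_\ell = F_{i_b}\ssm a_b \supseteq F$ because $F\sse F_{i_b}$ and $a_b\cap F=\emptyset$. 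Therefore $F_\ell$ appears in our restricted subsequence, say $F_\ell=F_{i_c}$, with $c<b$ since $i_c=\ell<i_b$. Finally,
\[
G_b\ssm G_c = (F_{i_b}\ssm F)\ssm(F_{i_c}\ssm F) = (F_{i_b}\ssm F_\ell)\ssm F = a_b\ssm F = a_b,
\]
which is exactly the condition needed in $\mr{lk}_\Delta(F)$.

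The main (mild) obstacle is precisely the step where one must guarantee that the ``predecessor facet'' $F_\ell$ provided by the $d$-shelling of $\Delta$ still contains $F$, so that it lies in the restricted sequence; the argument above handles this by observing that $F$ is forced into the intersection $F_{i_b}\cap F_\ell$ because $a_b$ avoids $F$. Once this is established, everything descends cleanly to the link, and the equivalence in the previous theorem upgrades the pairwise condition to a bona fide $d$-shelling.
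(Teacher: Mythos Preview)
Your argument is correct and follows essentially the same approach as the paper: restrict the $d$-shelling to the facets containing $F$ and check that the induced order works on the link, the crucial point being that the predecessor facet still contains $F$. The only cosmetic difference is that you verify condition $(ii)$ of the preceding theorem, whereas the paper works directly with the definition (maximal faces of the intersection); your version is in fact more explicit about the key step.
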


\begin{proof}
Assume $F_1,\ldots,F_t$ is a $d$-shelling of $\Delta$ and that the face $F$ lies is the facets 
$F_{i_1},\ldots,F_{i_r}$. Put $G_{i_j}=F_{i_j}\ssm F$. Then 
$\mr{lk}_\Delta(F)=\{G_{i_1},\ldots,G_{i_r}\}$. If $j\le r$ and $G$ is a face of 
$G_{i_j}\cap\langle G_{i_1},\ldots,G_{i_{j-1}}\rangle$, then $F\cup G$ is a face of 
$F_{i_j}\cap\langle F_1,\ldots,F_{i_{j-1}}\rangle$. Hence, if $G$ is maximal we see that 
$|G|=|F_{j_i}|-|F|-d$, which is our result.
\end{proof}

We now investige the behaviour of the Betti numbers of ideals with $d$-quotients. The following two results are 
more or less obviuos. We record them just since they show that the notion of $d$-quotients is not empty.

\begin{lem}
Let ${\bf y}=y_1,\ldots,y_r$ be a sequence of monomials in $R=k[x_1,\ldots,x_n]$. Then ${\bf y}$ is an $R$-sequence 
precisely when $\mr{gcd}(y_i,y_j)=1$ for every $i\neq j$. 
\end{lem}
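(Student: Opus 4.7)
The plan is to treat the two implications separately, each by an elementary monomial argument.

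For the implication $(\Leftarrow)$, assume $\gcd(y_i,y_j)=1$ for $i\neq j$. I would show by induction on $j=1,\ldots,r$ that $y_j$ is a non-zero-divisor modulo $(y_1,\ldots,y_{j-1})$. The key observation is that the quotient $R/(y_1,\ldots,y_{j-1})$ has a $k$-basis consisting of those monomials in $R$ that are not divisible by any $y_k$ with $k<j$. Given $f\in R$ with $y_j f\in (y_1,\ldots,y_{j-1})$, I may replace $f$ by its reduction so that every monomial $m$ appearing with non-zero coefficient in $f$ lies outside $(y_1,\ldots,y_{j-1})$; then each $y_j m$ must be divisible by some $y_k$, $k<j$, and the coprimality assumption $\gcd(y_j,y_k)=1$ forces $y_k\mid m$ — contradicting $m\notin (y_1,\ldots,y_{j-1})$ unless $f=0$ in the quotient.

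For the implication $(\Rightarrow)$, assume ${\bf y}$ is an $R$-sequence. Since the $y_i$ are homogeneous elements of the positively-graded polynomial ring $R$, any permutation of the sequence is again a regular sequence (a standard fact for homogeneous sequences in graded rings). Thus for each pair $i\neq j$ I may reduce to the case of a regular sequence $y_1,y_2$ of two monomials and need only show $\gcd(y_1,y_2)=1$. Suppose instead $d=\gcd(y_1,y_2)$ has positive degree, and write $y_1=da$, $y_2=db$ with $\gcd(a,b)=1$. Then $a\cdot y_2=ab d=b\cdot y_1\in (y_1)$, while $a\notin(y_1)$ because $\deg a<\deg y_1$ and a monomial lies in $(y_1)$ iff it is divisible by $y_1$. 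Hence $y_2$ is a zero-divisor modulo $(y_1)$, contradicting regularity.

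The only mild subtlety is the appeal to permutation invariance of regular sequences in the reverse direction; everything else is immediate from the elementary characterization that a monomial $m$ belongs to a monomial ideal iff it is divisible by one of the generators.
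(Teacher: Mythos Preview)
Your argument is correct in both directions; the appeal to permutation invariance of regular sequences of homogeneous elements of positive degree is a standard fact (recorded, for instance, in \cite{BH}) and causes no trouble here. The paper itself supplies no proof of this lemma, remarking only that it is ``more or less obvious,'' so there is no approach to compare yours against.
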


\begin{prop}
For every pair of integers $1\le d\le d'$ there exist a squarefree monomial ideal 
$I=(x^{m_1},\ldots,x^{m_t})\sse k[x_1,\ldots,x_n]$, $n$ sufficiently large, $\deg x^{m_i}=d'$ for every $i=1,\ldots,t$, 
such that: if we put $I_s=(x^{m_1},\ldots,x^{m_s})$, $s=1,\ldots,t$, then every colon ideal $I_s:x^{m_{s+1}}$ is 
generated by an $R$-sequence ${\bf x}_s=x^{b_{s_1}},\ldots,x^{b_{s_{r_s}}}$ of squarefree monomials of degree $d$.
\end{prop}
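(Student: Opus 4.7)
The plan is to construct such an ideal explicitly, with the form $m_i = u\cdot w_i$, where $u$ is a fixed common piece and the $w_i$ are pairwise coprime pieces that will surface as the generators of the colon ideals.

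More precisely, I would proceed as follows. Fix any $t\ge 1$ and work in $R=k[x_1,\ldots,x_n]$ for $n\ge (d'-d)+td$. Choose a squarefree monomial $u$ of degree $d'-d$ in the variables $x_1,\ldots,x_{d'-d}$, and partition the remaining $td$ variables into $t$ disjoint blocks of size $d$, defining $w_i$ to be the product of the variables in the $i$-th block. Set $x^{m_i}=u\, w_i$. Then each $x^{m_i}$ is squarefree of degree $d'$, the $w_i$'s are pairwise coprime squarefree monomials of degree $d$, and each $w_i$ is coprime to $u$.

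The key computation is to identify the colon ideals. For monomial ideals one has
\[
I_s:x^{m_{s+1}} \;=\; \bigl(\, x^{m_i}/\gcd(x^{m_i},x^{m_{s+1}}) \;:\; 1\le i\le s\,\bigr).
\]
Since $\gcd(u\,w_i,\,u\,w_{s+1})=u$ for $i\neq s+1$, this simplifies to
\[
I_s:x^{m_{s+1}} \;=\; (w_1,\ldots,w_s),
\]
and this sequence $w_1,\ldots,w_s$ is pairwise coprime, hence an $R$-sequence by the preceding lemma. The $w_i$'s are squarefree monomials of degree $d$, so all the required conditions are satisfied. Taking $I=(x^{m_1},\ldots,x^{m_t})$ gives the desired ideal.

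There is no real obstacle here beyond making sure the construction is not trivial: one needs enough variables ($n\ge d'-d+td$) and $t\ge 2$ for the colon ideals to be nonzero. With the chosen construction both the ``squarefree of degree $d$'' and ``$R$-sequence'' requirements reduce to the observation that pairwise disjoint variable supports guarantee pairwise coprimality, so the proof is essentially the colon computation above.
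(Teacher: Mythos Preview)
Your proposal is correct and is essentially the same construction as the paper's: the paper takes a fixed set $M$ of size $d'-d$ and chooses the $m_i$ so that $m_i\cap m_j=M$ for all $i\neq j$, which is exactly your $u$ together with pairwise disjoint blocks $w_i$. You have simply spelled out the colon computation and the variable count more explicitly than the paper does.
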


\begin{proof}
Let $M\sse [n]$ be a set such that $|M|=d'-d$. Choose the generators $x^{m_i}$ such that $|m_i|=d'$ for 
every $i=1,\ldots,t$ and $m_i\cap m_j=M$ for every $i\neq j$. 
\end{proof}

Splittable monomial ideals, introduced by Eliahou and Kervaire in \cite{Eli}, has been studied in for example \cite{VT5,VT,VT2}. 
This class of ideals is well behaved in the sense that their Betti numbers satisfy the {\it Eliahou-Kervaire formula}, see 
\cite{Eli} Proposition 3.1. The following definition (that is Definition 1.1 in \cite{VT5}), captures the content of the 
Eliahou-Kervaire formula in an axiomatic way. 

\begin{defn}
Let $I$, $J$ and $K$ be monomial ideals such that $\mc{G}(I)$ is the disjoint union of $\mc{G}(J)$ and 
$\mc{G}(K)$. Then $I=J+K$ is a {\bf Betti splitting} if
\[
\beta_{i,j}(I)=\beta_{i.j}(J)+\beta_{i,j}(K)+\beta_{i-1,j}(J\cap K)
\]
for all $i\in\mathbb{N}$ and (multi)degrees $j$.
\end{defn}

It is easy to see that a monomial ideal $I$ with linear quotients has a very natural Betti splitting. This is the core of 
the fact that the minimal free resolution of $I$ is a mapping cone. The connection between ``being a mapping cone'' and 
``having a Betti splitting'', is described in \cite{VT5} Proposition 2.1.\\

\begin{thm}\label{about quotients}
Let $I=(x^{m_1},\ldots,x^{m_t})$, $\deg x^{m_i}=d'$ for every $i=1,\ldots,t$, be a squarefree monomial ideal with 
$d$-quotients, $d\le d'$, and put $I_s=(x^{m_1},\ldots,x^{m_s})$, $s=1,\ldots,t$. Then
\begin{itemize}
\item[$(i)$]{$\beta_{i,j}(R/(I_{s-1}:x^{m_{s}})(-d'))$ and $\beta_{i,j}(R/I_{s-1})$ are not non zero in any common 
degree $j$ for any $i\ge 2$, $s=2,\ldots,t$. Hence $I_s=I_{s-1}+(x^{m_s})$ is a Betti splitting.}
\item[$(ii)$]{For all $i$, $2\le i\le\mr{pd}_R(R/I)$, we have
\[
\beta_i(R/I)=\sum_{s=2}^{t}\beta_{i-1}(R/(I_{s-1}:x^{m_{s}})(-d')).
\]}
\end{itemize}
\end{thm}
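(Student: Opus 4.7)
The plan is to use the short exact sequence
\[
0 \to \bigl(R/(I_{s-1}:x^{m_s})\bigr)(-d') \xrightarrow{\,\cdot\, x^{m_s}} R/I_{s-1} \to R/I_s \to 0
\]
and its long exact $\mathrm{Tor}(-,k)$ sequence; this is precisely the mapping-cone setup alluded to in the paragraph just before the theorem. Part (i) amounts to the statement that, at each step $s$, the lift of the map between minimal resolutions has all entries in the maximal ideal in homological positions $i\ge 2$, so the mapping cone is minimal and, equivalently, the connecting homomorphisms in the long exact Tor sequence vanish in every degree for $i\ge 2$. Once this is in place, the long exact sequence breaks into short exact pieces and we immediately obtain
\[
\beta_i(R/I_s) = \beta_i(R/I_{s-1}) + \beta_{i-1}\bigl(R/(I_{s-1}:x^{m_s})(-d')\bigr) \qquad (i\ge 2),
\]
which is the Betti splitting concluded in (i).

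For (i) the first ingredient is an easy degree bound. Since $I_{s-1}:x^{m_s}$ is generated in degree $d$ by the very definition of $d$-quotients, one has $\beta_{i,j}\bigl(R/(I_{s-1}:x^{m_s})\bigr)=0$ for $j<i+d-1$ when $i\ge 1$, so after the $(-d')$ shift every non-zero Betti number of $R/(I_{s-1}:x^{m_s})(-d')$ with $i\ge 1$ lies in a degree $j\ge d'+d+i-1$. To finish (i) one must therefore check that $R/I_{s-1}$ has no Betti number at any such degree for $i\ge 2$. My plan is to induct on $s$ carrying a multigraded refinement: a non-zero multigraded Betti number of the shifted colon forces the multidegree $\sigma$ to contain $m_s$, so it suffices to show $\beta_{i,\sigma}(R/I_{s-1})=0$ for every $\sigma\supseteq m_s$ and $i\ge 2$. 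By Hochster's formula this is a vanishing statement for the reduced homology of the induced complex $(\Delta_{s-1})_\sigma$, and the key combinatorial input is that $m_s$ is a face of $\Delta_{s-1}$ whose link in $(\Delta_{s-1})_\sigma$ is exactly $(\Delta_{I_{s-1}:x^{m_s}})_{\sigma\setminus m_s}$; a Mayer--Vietoris decomposition around the (contractible) star of $m_s$ then propagates the inductive hypothesis from the colon to $R/I_{s-1}$.

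The main obstacle is precisely this multigraded induction: one must show that the combinatorial link identification suffices to force the required homology vanishing at each step, rather than being defeated by new homology appearing as $s$ grows. Once (i) is secured, (ii) follows by a straightforward telescoping. Since $I_1=(x^{m_1})$ is principal, $\beta_i(R/I_1)=0$ for $i\ge 2$, and iterating the splitting from (i) up from $s=1$ to $s=t$ yields
\[
\beta_i(R/I) \;=\; \sum_{s=2}^{t} \beta_{i-1}\bigl(R/(I_{s-1}:x^{m_s})(-d')\bigr),
\]
as claimed.
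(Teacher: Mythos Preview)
Your derivation of (ii) from (i) via the short exact sequence and telescoping is correct and is exactly what the paper does.

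The problem lies in your plan for (i). The multigraded sufficient condition you propose, namely that $\beta_{i,\sigma}(R/I_{s-1})=0$ for every squarefree $\sigma\supseteq m_s$ and every $i\ge 2$, is \emph{false}. Take $d'=4$, $d=2$ and
\[
m_1=\{1,2,3,4\},\qquad m_2=\{3,4,5,6\},\qquad m_3=\{1,2,5,6\}.
\]
Then $I_1:x^{m_2}=(x_1x_2)$ and $I_2:x^{m_3}=(x_3x_4)$, so the ordering does give $2$-quotients. But $I_2=(x^{m_1},x^{m_2})$ has a single first syzygy in multidegree $m_1\cup m_2=\{1,\dots,6\}$, so $\beta_{2,\{1,\dots,6\}}(R/I_2)=1$, and $\{1,\dots,6\}\supseteq m_3$. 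Thus the multigraded vanishing you aim to establish by the Mayer--Vietoris/link argument cannot hold, and that argument cannot be completed. (Note that the \emph{statement} of (i) is not violated here: $R/(I_2:x^{m_3})=R/(x_3x_4)$ has no second Betti number at all, so there is no overlap at $(i,j)=(2,6)$. Your proposed sufficient condition is simply strictly stronger than what is needed, and it fails.)

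The paper avoids this difficulty by taking a different route to (i). Rather than proving degree disjointness first, it works with the sequence
\[
0\to I_{s-1}\cap(x^{m_s})\to I_{s-1}\oplus(x^{m_s})\to I_s\to 0,
\]
identifies $I_{s-1}\cap(x^{m_s})\cong (I_{s-1}:x^{m_s})(-d')$, and invokes the criterion of Francisco--H\`a--Van~Tuyl that $I_s=I_{s-1}+(x^{m_s})$ is a Betti splitting precisely when the mapping cone on the lift of the left map is minimal. For ideals with linear quotients the minimality of this mapping cone is the Herzog--Takayama construction, and the paper asserts that the same verification goes through for $d$-quotients. In other words, the paper establishes the Betti-splitting conclusion of (i) directly via mapping-cone minimality, without passing through the graded-disjointness assertion you were trying to prove; if you want a complete argument along your lines you would need a genuinely different mechanism than the multigraded vanishing you proposed.
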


\begin{proof}
$(ii)$ is a consequence of $(i)$ since if we assume that $(i)$ holds, then for every $s=2,\ldots,t$ we 
have an exact sequence 
\[
0\to R/(I_{s-1}:x^{m_{s}})(-d')\mapr{x^{m_{s}}} R/I_{s-1}\to R/I_{s}\to0,
\]
where the first map is multiplication by $x^{m_{s}}$. It follows from the long exact Tor-sequence that 
$\beta_i(R/I_{t})=\beta_i(R/I_{t-1})+\beta_{i-1}(R/(I_{t-1}:x^{m_{t}})(-d'))$. Noting that $\beta_2(R/I_1)=0$, 
$(ii)$ now follows by induction.\\

To prove $(i)$, let $2\le r\le t$ and consider the following exact secuence
\begin{eqnarray}
0\to I_{r-1}\cap (x^{m_r})\to I_{r-1}\oplus (x^{m_r})\to I_r\to0.
\end{eqnarray}
The non trivial maps are $x\mapsto (x,-x)$ and $(x,y)\mapsto x+y$.

Let $F'.$ and $G'.$ be the minimal free resolutions of $I_{r-1}\cap (x^{m_r})$ and $I_{r-1}\oplus (x^{m_r})$ 
respectively. It follows from Proposition 2.1 in \cite{VT5} that $I_r=I_{r-1}+(x^{m_r})$ is a Betti splitting 
precisely when the mapping cone, cone($\alpha$), of the lifting $\alpha:F'.\to G'.$ of the left map in the above 
exact sequence is the minimal free resolution of $I_r$.
 
Given a monomial ideal $J=(x^{k_1},\ldots,x^{k_u})$ with linear quotients and $\deg(x^{k_1})\le\cdots\le\deg(x^{k_u})$, 
it is known (\cite{He5}) that the minimal free resolution of $R/J$ is the mapping cone of the lifting of the map $R/((x^{k_1},\ldots,x^{k_{u-1}}):x^{k_u})\mapr{x^{k_u}}R/(x^{k_1},\ldots,x^{k_{u-1}})$ to the corresponding minimal 
free resolutions. This is still true if we use $d$-quotients instead, and is easily verified. Now, consider the 
ideals $I_{r-1}\cap (x^{m_r})$ and $I_{r-1}:x^{m_r}$. By looking at the generators of these two ideals, it is clear 
that we have an homogeneous $R$-module isomorphism
\[
(I_{r-1}:x^{m_r})(-d')\cong I_{r-1}\cap(x^{m_r}).
\]

Let $F.$ and $G.$ be the minimal free resolutions of $R/(I_{r-1}:x^{m_r})(-d')$ and $R/I_{r-1}$ respectively, and 
$\alpha:F.\to G.$ a lifting of the map $R/(I_{r-1}:x^{m_r})(-d')\mapr{x^{m_r}} R/I_{r-1}$. Note that the minimal free 
resolutions of $I_{r-1}$ and $I_{r-1}\oplus (x^{m_r})$ only differ in a very simple way at the bottom degrees. Using this 
and the above isomorphism, we realize that the mapping cone of the lifting of the map 
$I_{r-1}\cap(x^{m_r})\to I_{r-1}\oplus (x^{m_r})$, essentially is obtained by truncating the mapping cone of $\alpha$. 
Hence, since we know that cone($\alpha$) is the minimal free resolution of $R/I_r$, this new mapping cone is the minimal 
free resolution of $I_r$ and $I_r=I_{r-1}+(x^{m_r})$ is a Betti splitting.\\
\end{proof}

\begin{exam}
$I=(abc,cde,bef,adf)\sse k[a,b,c,d,e,f]$ is an ideal with 2-quotients. The Betti numbers of $R/I$, 
in homological degrees $1, 2,$ and $3$, are $4,6$ and $3$. The corresponding Betti numbers for $R(-3)/(I_1:cde)$, 
$R(-3)/(I_2:bef)$, and $R(-3)/(I_3:adf)$, are $1, 0$ and $0$; $2, 1$ and $0$; and $3, 2$ and $0$ respectively. 
It is easily verified that these sum up, according to the proposition, to the Betti numbers of $R/I$.
\end{exam}

\begin{cor}
Let $I=(x^{m_1},\ldots,x^{m_t})$, $\deg x^{m_i}=d'$ for every $i=1,\ldots,t$, be a squarefree monomial ideal with 
$d$-quotients, $d\le d'$, and assume the minimal generators of $I_{s-1}:x^{m_{s}}$ forms an $R$-sequence for every 
$s=1,\ldots,t$. Then $\beta_{i,j}(R/I)$ is non zero only for $j=i+d'-1+(i-1)(d-1)$, and for all $i$, 
$2\le i\le\mr{pd}(R/I)$, we have 
\[
\beta_{i,j}(R/I)=\sum_{s=2}^{t}\gf{r_s},{i-1}.
\]
\end{cor}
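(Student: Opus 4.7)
The plan is to combine part $(ii)$ of Theorem \ref{about quotients} with the Koszul complex, since under the extra hypothesis the colon ideals $I_{s-1}:x^{m_s}$ are generated by regular sequences of monomials, and their minimal free resolutions are therefore Koszul complexes.

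First I would recall from Theorem \ref{about quotients}$(ii)$ that for all $i\ge 2$,
\[
\beta_i(R/I)=\sum_{s=2}^{t}\beta_{i-1}\bigl(R/(I_{s-1}:x^{m_s})(-d')\bigr),
\]
so it suffices to understand each summand. By the $d$-quotient hypothesis, $I_{s-1}:x^{m_s}=(x^{b_{s_1}},\ldots,x^{b_{s_{r_s}}})$ with each $\deg x^{b_{s_\ell}}=d$, and by the additional assumption these $r_s$ monomials form an $R$-sequence. Hence the minimal free resolution of $R/(I_{s-1}:x^{m_s})$ is the Koszul complex on these generators, which gives
\[
\beta_{i-1,j}\bigl(R/(I_{s-1}:x^{m_s})\bigr)=\binom{r_s}{i-1}
\]
precisely when $j=(i-1)d$, and zero otherwise.

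Next I would track the shift: twisting by $(-d')$ shifts every internal degree up by $d'$, so
\[
\beta_{i-1,j}\bigl(R/(I_{s-1}:x^{m_s})(-d')\bigr)=\binom{r_s}{i-1}
\]
exactly when $j=(i-1)d+d'=i+d'-1+(i-1)(d-1)$. Since this single degree is the same for every $s$ (it depends only on $i,d,d'$ and not on $s$), the summation in Theorem \ref{about quotients}$(ii)$ collects all contributions into the same graded slot, yielding
\[
\beta_{i,j}(R/I)=\sum_{s=2}^{t}\binom{r_s}{i-1}
\quad\text{for } j=i+d'-1+(i-1)(d-1),
\]
and $\beta_{i,j}(R/I)=0$ for any other $j$.

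The argument is largely routine once Theorem \ref{about quotients}$(ii)$ is in hand; the only step that needs a little care is the degree bookkeeping, namely verifying algebraically that $(i-1)d+d'=i+d'-1+(i-1)(d-1)$ and confirming that the Koszul bidegree $((i-1),(i-1)d)$ is the unique bidegree in which $R/(I_{s-1}:x^{m_s})$ has nonzero Betti numbers at homological position $i-1$. This uniqueness is what lets us conclude that $\beta_{i,j}(R/I)$ is concentrated in a single internal degree rather than being smeared across several.
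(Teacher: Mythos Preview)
Your approach is essentially the same as the paper's: both compute the Betti numbers of the colon ideals via the Koszul complex on the regular sequence of degree-$d$ monomials, observe that these are concentrated in the single internal degree $(i-1)d+d'=i+d'-1+(i-1)(d-1)$, and then appeal to the additive formula from Theorem \ref{about quotients}.

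One small point of care: Theorem \ref{about quotients}$(ii)$ as stated gives only the \emph{total} Betti numbers $\beta_i(R/I)$, not the graded ones $\beta_{i,j}(R/I)$. From the ungraded equality alone you cannot conclude that $\beta_{i,j}(R/I)$ is concentrated in a single $j$; in principle the total could be redistributed among degrees. To pass to the graded statement you should invoke part $(i)$ of Theorem \ref{about quotients}, which says each $I_s=I_{s-1}+(x^{m_s})$ is a Betti splitting, so that the graded analogue $\beta_{i,j}(R/I_s)=\beta_{i,j}(R/I_{s-1})+\beta_{i-1,j}\bigl(R/(I_{s-1}:x^{m_s})(-d')\bigr)$ holds as well. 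The paper handles this by doing the induction on $s$ explicitly, verifying at each step that $\beta_i(R/I_{s-1})$ is already concentrated in degree $d'+id-d$ before adding the next Koszul contribution. Once you cite part $(i)$ (or redo this short induction), your argument is complete and matches the paper's.
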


\begin{proof}
Let $I_{s-1}:x^{m_{s}}=(x^{b_{s_1}},\ldots,x^{b_{s_{r_s}}})$. It is then easy to see that 
$\beta_i(R/(I_{s-1}:x^{m_{s}})(-d'))=\gf{r_s},{i}$ in degree $j=id+d'$ and zero in all other degrees. By induction, 
$\beta_i(R/I_{s-1})$ is non zero only in degree $j=i+d'-1+(i-1)(d-1)=d'+id-d$. This shows that 
$\beta_i(R/I_{s})$ may be non zero only in degree $j=i+d'-1+(i-1)(d-1)$ and that 
$\beta_i(R/I_{s})=\beta_i(R/I_{s-1})+\beta_{i-1}(R/(I_{s-1}:x^{m_{s}})(-d'))$. The result now follows by induction 
and Theorem \ref{about quotients}. 
\end{proof}

\begin{exam}
An example of such ideal is $I=(abc,cde,cfg,chi)$ in the polynomial ring $k[a,b,c,d,e,f,g,h,i]$. The Betti numbers 
in homological degrees $1,2,3$ and $4$ are $4, 6, 4$ and $1$, and they lie in the degrees described in the corollary. 
It is easily seen that these are constructed from the Betti numbers of the colon ideals.
\end{exam}

\begin{cor}\label{lin.quot}
Let $I=(x^{m_1},\ldots,x^{m_t})$, $\deg x^{m_i}=d'$ for every $i=1,\ldots,t$, be a squarefree monomial ideal with 
linear quotients. If $I_{s-1}:x^{m_{s}}=(x_{s_1},\ldots,x_{s_{r_s}})$, $s=2,\ldots,t$, then for all 
$2\le i\le\mr{pd}_R(R/I)$, $\beta_{i,j}(R/I)$ is 
non zero only in degree $j=i+d'-1$ and we have
\[
\beta_{i,j}(R/I)=\sum_{s=2}^{t}\gf{r_s},{i-1}.
\]
\end{cor}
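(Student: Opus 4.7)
The plan is to obtain this corollary as the special case $d=1$ of the preceding corollary (the one for ideals with $d$-quotients whose colon ideals form $R$-sequences). Since $I$ has linear quotients, each colon ideal $I_{s-1}:x^{m_s}$ is, by hypothesis, generated by variables $x_{s_1},\ldots,x_{s_{r_s}}$. Any set of distinct variables is pairwise coprime, so by the lemma preceding the previous corollary they form an $R$-sequence. Hence $I$ fits the hypotheses of that corollary with $d=1$ and the same $d'$.

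Substituting $d=1$ in the formula from the previous corollary, the degree in which the Betti numbers can be non zero,
\[
j=i+d'-1+(i-1)(d-1),
\]
collapses to $j=i+d'-1$, which is exactly the claim that the resolution is linear past homological degree $1$. The counting formula
\[
\beta_{i,j}(R/I)=\sum_{s=2}^{t}\gf{r_s},{i-1}
\]
is simply the conclusion of the previous corollary transcribed, since the $r_s$ here and there denote the same thing (the number of minimal generators of $I_{s-1}:x^{m_s}$).

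The main conceptual content therefore sits in the previous corollary (and ultimately in Theorem \ref{about quotients}, which gives the Betti splitting and the resulting recursion $\beta_i(R/I_s)=\beta_i(R/I_{s-1})+\beta_{i-1}(R/(I_{s-1}:x^{m_s})(-d'))$); the present corollary is just the instance where the colon ideals are generated by variables. The only small item to check is the claim that variables always form an $R$-sequence, which is immediate from the cited lemma since $\gcd(x_{s_a},x_{s_b})=1$ for $a\neq b$. No additional obstacle is expected.
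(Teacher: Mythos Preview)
Your proof is correct and matches the paper's intended approach: the paper states this corollary without proof, precisely because it is the immediate specialization $d=1$ of the preceding corollary, using that distinct variables have pairwise $\gcd$ equal to $1$ and hence form an $R$-sequence.
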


\bibliographystyle{amsplain}
\bibliography{ref}

\end{document}